\providecommand{\up}[1]{{\upshape(}#1{\upshape)}}
\newcommand{\tst}{\textstyle}
\newcommand{\fixmehide}[1]{}
\newcommand{\fixmelater}[1]{}
\newcommand{\fixmedone}[1]{}
\newcommand{\fixmehidden}[1]{}
\newcommand{\tmop}[1]{\ensuremath{\operatorname{#1}}}
\def\sumstar{\sideset{}{^*}\sum}
\def\sumfund{\sideset{}{^\flat}\sum}
\def\pamod{\! \! \! \! \pmod}
\def\eps{\varepsilon}
\newcommand{\Tr}{\mathrm{Tr}}
\newcommand{\AI}{{\mathcal{AI}}}
\newcommand{\new}{\mathrm{new}}
\newcommand{\std}{\mathrm{std}}
\newcommand{\ad}{\mathrm{ad}}
\newcommand{\Sym}{\mathrm{Sym}}
\renewcommand{\Im}{\mathrm{Im}}
\renewcommand{\Re}{\mathrm{Re}}
\newcommand{\Sp}{\mathrm{Sp}}
\renewcommand{\H}{\mathbb H}
\newcommand{\T}[1]{{}^t\!{#1}}
\newcommand{\A}{{\mathbb A}}
\renewcommand{\P}{{\mathcal P}}
\newcommand{\Q}{{\mathbb Q}}
\newcommand{\Z}{{\mathbb Z}}
\newcommand{\R}{{\mathbb R}}
\newcommand{\C}{{\mathbb C}}
\newcommand{\B}{{\mathcal B}}
\newcommand{\bs}{\backslash}
\newcommand{\GL}{{\rm GL}}
\newcommand{\SL}{{\rm SL}}
\newcommand{\SO}{{\rm SO}}
\newcommand{\GSp}{{\rm GSp}}
\newcommand{\PGSp}{{\rm PGSp}}
\newcommand{\St}{{\rm St}}
\newcommand{\disc}{{\rm disc}}
\DeclareMathOperator{\Cl}{Cl}
\newcommand{\mat}[4]{{\setlength{\arraycolsep}{0.5mm}\left(\begin{array}{cc}#1&#2\\#3&#4\end{array}\right)}}
\newcommand{\forget}[1]{}
   \def\MR#1{}
\newtheorem{lemma}{Lemma}[section]
\newtheorem{theorem}[lemma]{Theorem}
\newtheorem{corollary}[lemma]{Corollary}
\newtheorem{proposition}[lemma]{Proposition}
\theoremstyle{remark}
\newtheorem{remark}[lemma]{Remark}
\begin{document}

\bibliographystyle{amsalpha}

\title[On fundamental Fourier coefficients of Siegel cusp forms]{On fundamental Fourier coefficients of Siegel cusp forms of degree 2}

\author{Jesse J\"{a}\"{a}saari}
\address{School of Mathematical Sciences\\
  Queen Mary University of London\\
  London E1 4NS\\
  UK}
  \email{j.jaasaari@qmul.ac.uk}

\author{Stephen Lester}
\address{Department of Mathematics \\ King's College London \\ London W2CR 2LS \\
  UK}
  \email{steve.lester@kcl.ac.uk}

\author{Abhishek Saha}
\address{School of Mathematical Sciences\\
  Queen Mary University of London\\
  London E1 4NS\\
  UK}
  \email{abhishek.saha@qmul.ac.uk}

\subjclass[2010]{Primary 11F30; Secondary 11F37, 11F46, 11F67}

 \begin{abstract}Let $F$ be a Siegel cusp form of degree 2, even weight $k \ge 2$ and odd squarefree level $N$. We undertake a detailed study of the analytic properties of Fourier coefficients $a(F,S)$ of $F$ at fundamental matrices $S$ (i.e., with $-4\det(S)$ equal to a fundamental discriminant). We prove that as $S$ varies along the equivalence classes of fundamental matrices with  $\det(S) \asymp X$, the sequence $a(F,S)$ has  at least $X^{1-\eps}$ sign changes, and takes at least $X^{1-\eps}$ ``large values". Furthermore, assuming the Generalized Riemann Hypothesis as well as the refined Gan--Gross--Prasad conjecture, we prove the bound $|a(F,S)| \ll_{F, \eps} \frac{\det(S)^{\frac{k}2 - \frac{1}{2}}}{ (\log |\det(S)|)^{\frac18 - \eps}}$ for fundamental matrices $S$.
 \end{abstract}

 \maketitle

\section{Introduction}
Let $S_k(\Gamma_0^{(2)}(N))$ denote the space of Siegel cusp forms of degree  $2$ and weight $k$ with respect to the congruence subgroup $\Gamma_0^{(2)}(N) \subseteq \Sp_4(\Z)$ of level $N$. Any $F \in S_k(\Gamma_0^{(2)}(N))$ has a Fourier expansion of the form
\begin{equation}\label{fouriercoeffsiegelform}\tst
 F(Z)=\sum_{S\in \Lambda_2} a(F, S)e^{2\pi i{\rm Tr}(SZ)},\qquad Z\text{ in the Siegel upper half space},
\end{equation}
where the set $\Lambda_2$ consists of symmetric, semi-integral, positive-definite $2\times2$ matrices $S$, i.e.,
\begin{equation}\label{e:Lambda2} \tst
 \Lambda_2 = \left\{\mat{a}{b/2}{b/2}{c},\qquad a,b,c\in\Z, \qquad a>0, \qquad  d:=b^2 - 4ac <0\right\}.
\end{equation}
For $S =\mat{a}{b/2}{b/2}{c}\in \Lambda_2$, we define $\disc(S)=-4\det(S)=b^2-4ac.$ If $d = \disc(S)$ is a fundamental discriminant\footnote{Recall that an integer $n$ is a fundamental discriminant if \emph{either} $n$ is a squarefree integer congruent to 1 modulo 4 \emph{or} $n = 4m$ where $m$ is a squarefree integer congruent to 2 or 3 modulo 4.}, then $S$ is called \emph{fundamental}. A fundamental $S$ is automatically primitive (i.e., $\gcd(a,b,c)=1$). Observe that if $d$ is odd, then $S$ is fundamental if and only if $d$ is squarefree.
 The fundamental Fourier coefficients of Siegel cusp forms are deep and highly interesting objects. These are the basic building blocks, in the sense that one cannot use the theory of Hecke operators to relate the Fourier coefficients $a(F,S)$ at these matrices to those at simpler matrices. Furthermore, fundamental Fourier coefficients are closely related to central $L$-values.

 In \cite{saha-2013,sahaschmidt} it was proved that if $k>2$ is even and $N$ is squarefree, then elements of $S_k(\Gamma_0^{(2)}(N))$ (under some mild assumptions) are uniquely determined by their fundamental Fourier coefficients. More precisely, it was proved there that \emph{for $k$, $N$ as above, if $F \in S_k(\Gamma_0^{(2)}(N))$ is non-zero and an eigenfunction of the $U(p)$ operators for $p|N$, then $a(F, S) \ne 0$ for infinitely many matrices $S$ such that $\disc(S)$ is odd and squarefree.} This non-vanishing result is crucial for the existence of good Bessel models \cite[Lemma 5.1.1]{transfer} and consequently was needed for removing a key assumption from theorems due to Furusawa~\cite{fur}, Pitale--Schmidt~\cite{pitsch} and the third-named author~\cite{saha1, sah2, transfer} on the degree 8 $L$-function on $\GSp_4 \times \GL_2$. Furthermore, there is a remarkable identity, explained in more detail in Section \ref{s:lvaluesintro} below, relating squares of (weighted averages of) fundamental Fourier coefficients and central values of dihedral twists of $\GSp_4$ and $\GL_2$ $L$-functions. Indeed, the fundamental Fourier coefficients are \emph{unipotent periods} whose weighted averages are \emph{Bessel periods} whose absolute squares are essentially central $L$-values of degree 8 $L$-functions, via the refined Gan--Gross--Prasad conjectures \cite{yifengliu}.

 Motivated by these connections, the objective of this paper is to understand better the nature of the fundamental Fourier coefficients. In particular, we investigate the following questions:

 \begin{itemize}
\item Are there many \emph{sign changes} among the fundamental Fourier coefficients?
\item How \emph{large} (in the sense of both lower and upper bounds) are the fundamental Fourier coefficients?
\end{itemize}

We emphasize that while these kinds of questions have been previously studied for the full sequence $a(F,S)$ ($S\in \Lambda_2$) of Fourier coefficients attached to $F$ (see \cite{Choie-gun-kohnen-15,Gun-Sengupta-17,He-Zhao-18} for results on sign-changes and \cite{das2020large} for an Omega-result) there appears to be virtually no previous work in the more subtle setting where one restricts to fundamental Fourier coefficients.  There has also been a fair bit of work on sign changes of \emph{Hecke eigenvalues} of Siegel cusp forms \cite{kohnen07, pitschsign, RSW16, Kohnen-das-18} which can be combined with the Hecke relations \cite{andrianov} to deduce sign changes among the $a(F,S)$ with $\disc(S)=dm^2$ where $d$ is a \emph{fixed} fundamental discriminant and $m$ varies. This should make it clear that the problem of obtaining sign changes or growth asymptotics for Fourier coefficients \emph{not} associated to fundamental discriminants is of a different flavor (and
relatively easier). Our focus in this paper is on the subsequence of Fourier coefficients $a(F,S)$ with $S$ restricted to matrices of fundamental discriminant, where these questions are more difficult.

\subsection{Main results}\label{s:signintro} \label{s:intromainresults}
Let $k > 2$ be an even integer and $N$ be an odd squarefree integer. Fix $F \in S_k(\Gamma_0^{(2)}(N))$. If $N>1$, assume that $F$ is an eigenform for the $U(p)$ Hecke operator (see \eqref{upaction}) for the finitely many primes $p|N$; we make no assumptions concerning whether $F$ is a Hecke eigenform at primes not dividing the level $N$. Our main result on sign changes is as follows.
\medskip

\noindent
\textbf{Theorem A.} (see Theorem \ref{thm:signssiegel}) \emph{For $F$ as above with real Fourier coefficients one can fix $M$ such that given $\eps>0$ and  sufficiently large $X$, there  are $ \ge X^{1-\eps}$ distinct odd squarefree integers $n_i \in [X, MX]$  and associated fundamental matrices $S_i\in \Lambda_2$ with $|\disc(S_i)| =n_i$, such that with the $n_i$ ordered in increasing manner  we have  $a(F, S_{i})a(F, S_{i+1})<0$.}

\medskip
Thus, Theorem A asserts that there are at least $X^{1-\eps}$ (strict) sign changes among the fundamental Fourier coefficients of discriminant $\asymp X$. Interestingly, this also improves the exponent of the non-vanishing results of \cite{saha-2013, sahaschmidt} mentioned earlier, where it was proved that there are $\gg_\eps X^{5/8 - \eps}$ non-vanishing fundamental Fourier coefficients of discriminant up to $X$.

Another question left unanswered in all previous works is that of \emph{lower bounds} for $|a(F,S)|$ with $S$ fundamental. Let $F$ be as above and fixed. A famous (and very deep) conjecture of Resnikoff and Saldana \cite{res-sald} predicts that for $S$ a fundamental\footnote{The conjecture extends to non-fundamental matrices but then it needs to be modified slightly by excluding the Saito-Kurokawa lifts.} matrix in $\Lambda_2$, \begin{equation}\label{e:ressald}|a(F,S)| \ll_{F,\varepsilon} |\disc(S)|^{\frac{k}2 -\frac{3}4 + \varepsilon}. \end{equation}
We prove a lower bound for many fundamental Fourier coefficients with an exponent of the same strength.

\medskip

\noindent
\textbf{Theorem B.} (see Theorem \ref{thm:large}) \emph{For $F$ as above, $\eps>0$ fixed, and all sufficiently large $X$ there are $\ge X^{1-\eps}$ distinct odd squarefree  integers $n \in  [X, 2X]$, with associated fundamental matrices $S_n$ such that $|\disc(S_n)|=n$ and  \[ \tst |a(F,S_n)| \ge n^{\frac{k}2 - \frac34} \exp\left(\frac{1}{82} \sqrt{\frac{\log n}{\log \log n}} \right). \]
}

Theorem B tells us that there are at least $X^{1-\eps}$ fundamental Fourier coefficients of discriminant $\asymp X$ whose sizes are ``large". Incidentally, just like Theorem A, Theorem B also improves upon the  exponent  of the set of non-vanishing fundamental coefficients obtained in \cite{sahaschmidt} from $5/8$ to 1.

Next, we investigate \emph{upper bounds} for the Fourier coefficients $|a(F,S)|$ for fundamental $S$. The best currently known bound  is due to Kohnen \cite{Koh93} who proved that $|a(F,S)| \ll_{F, \eps} |\disc(S)|^{\frac{k}2 - \frac{13}{36}+\eps}.$ This bound is quite far from the conjectured true bound \eqref{e:ressald}. In fact, even if one were to assume the Generalized Lindel\"of hypothesis, one only obtains the upper bound  $\ll_{F, \eps}|\disc(S)|^{\frac{k}2 - \frac{1}{2}+\eps}$ (as explained further below). Thus, the exponent $\frac{k}2 - \frac{1}{2}$ appears to be a natural barrier. By employing probabilistic methods and assuming the Generalized Riemann Hypothesis (GRH) for several $L$-functions, we are able to go beyond this barrier for the first time.

\medskip

\noindent
\textbf{Theorem C.} (see Theorem \ref{thm:upperbd}) \emph{Let $k > 2$ be an even integer and $N$ be an odd squarefree integer. Fix $F \in S_k(\Gamma_0^{(2)}(N))$. Assume that the refined Gan--Gross--Prasad conjecture \cite[(1.1)]{yifengliu}  holds\footnote{A proof of this conjecture has recently been announced by Furusawa and Morimoto (RIMS conference ``Analytic, geometric and $p$-adic aspects of automorphic forms and $L$-functions", January 2020).} for Bessel periods of holomorphic cusp forms on $\SO_5(\A)$. Assume that GRH holds for $L$-functions in the Selberg\footnote{We assume GRH for Rankin-Selberg $L$-functions $L(s, \pi_1 \times \pi_2)$ and for symmetric square $L$-functions $L(s, \Sym^2 \pi_3)$ where for $i=1,2,3$, $\pi_i$ is an automorphic representation of $\GL_{n_i}(\A_K)$ with $n_i \le 5$ with $K$ equal to either $\Q$ or an imaginary quadratic field; see also Remark \ref{r:automorphy}.} class. Then we have \[ \tst |a(F,S)| \ll_{F, \eps} \frac{|\disc(S)|^{\frac{k}2 - \frac{1}{2}}}{ (\log |\disc(S)|)^{\frac18 - \eps}}\]
for  fundamental matrices $S$.
}

We note that a bound similar to that obtained in Theorem C has been recently proved in the special case where $F$ is a \emph{Yoshida lift} by Blomer and Brumley \cite[Corollary 4]{Blomer-Brumley}.

\subsection{The reduction of Theorems A and B to half-integral weight forms} The proofs of Theorems A and B rely on reducing these questions to corresponding ones about cusp forms of weight $k-\frac12$ on the upper half-plane, exploiting the Fourier-Jacobi expansion of $F$ and the relation
between Jacobi forms and classical cusp forms of half-integral weight. More precisely, using \cite{iwanprime} it follows (see Section \ref{s:constructhalf}) that the set of primes $p$ such that the $p$'th Fourier-Jacobi coefficient of $F$ is non-zero has positive density in the set of all primes; fix \emph{any} $p$ in this set coprime to $N$. Using a classical construction going back to Eichler and Zagier \cite[Thm 5.6]{eichzag} in the case $N=1$ and due to Manickam--Ramakrishnan \cite{manickram} for squarefree $N$, we can now construct a non-zero cusp form $h$ of level $4Np$ and weight $k-\frac12$ whose Fourier coefficients $a(h,n)$ essentially equal some  $a(F,S)$ with $|\disc(S)|=n$.

From the above construction, Theorem A will follow if we can demonstrate $X^{1-\eps}$ sign changes among the coefficients $a(h,n)$ of the half-integral weight form $h$ for odd squarefree $n\asymp X$, which is exactly what we prove in Theorem \ref{thm:signs}, a result which builds upon works of Matom\"aki and Radziwi{\l}{\l} \cite{MatomakiRadziwill} and \cite{LesterRadziwill2019signs} and may be of independent interest. A point worth noting here is that $h$ is \emph{not} typically a Hecke eigenform (even when $F$ is a Hecke eigenform) as the passage from Siegel cusp forms to Jacobi forms described above is not a functorial correspondence. 
The main ingredient for our proof of Theorem \ref{thm:signs} is the demonstration of cancellation in sums of
$a(h,n)$ over almost all short intervals together with bounds on their moments thereby providing a lower bound on sums of $|a(h,n)|$ over almost all short intervals. Combining the two results shows that over many short intervals the absolute value of the average of $a(h,n)$ is strictly smaller than that of $|a(h,n)|$. Consequently, a sign change of $a(h,n)$ occurs in many short intervals.

Likewise, Theorem B follows provided we can demonstrate suitable large values for $|a(h,n)|$. This is done in Section \ref{s:large}. The main result of that section, Theorem \ref{thm:large2}, says that there are at least $X^{1-\eps}$ odd squarefree $n\asymp X$ with $$\tst |a(h,n)| \ge n^{\frac{k}2 - \frac34} \exp\left(\frac{1}{82} \sqrt{\frac{\log n}{\log \log n}} \right).$$ Theorem \ref{thm:large2} generalizes recent work of Gun--Kohnen--Soundararajan \cite{gun2020large} which dealt with the case of $h$ of level 4. The proof of Theorem  \ref{thm:large2} follows the ``resonance method"-strategy of \cite{soundararajan08,gun2020large}; however there are additional complications coming from the level which we need to overcome. The starting point of the proof is to use Kohnen's basis for $S_{k+\frac12}^+(4N)$ consisting of newforms and an explicit form of Waldspurger's formula to reduce the problem to showing large values for (a weighted average of) a particular central $L$-value, while controlling sizes of certain other central $L$-values (see Proposition \ref{p:largep1} and the discussion after it, in particular estimates (\ref{estimate1}) and (\ref{estimate2})). This is achieved by the resonance-method as in \cite{gun2020large}. A key technical input for this method is the evaluation of the first moment of twisted central $L$-values (Proposition \ref{key-proposition}), which is obtained following the method of \cite{SoundYoung}. Complications arising from the level show up here in a form of extra congruence and coprimality conditions, and these are dealt with as in \cite{RadziwillSound}.

Theorem C, unlike Theorems A and B, does not involve a reduction to half-integral weight forms. We explain the main ideas behind its proof in Section \ref{s:fractionalintro} further below.

Finally, we remark that a variant of the Fourier-Jacobi expansion trick sketched at the beginning of this subsection has been recently developed by B\"ocherer and Das to prove non-vanishing of fundamental Fourier coefficients  of Siegel modular forms of degree $n$   \cite{bocherer-das-2020}. By using their variant, it seems plausible that the methods of this paper may allow one to extend Theorems A and B above to Siegel cusp forms of higher degree. We do not pursue this extension here.
\subsection{Central $L$-values for dihedral twists of spin $L$-functions}\label{s:lvaluesintro}

For two matrices $S_1$, $S_2$ in $\Lambda_2$,  write $S_1 \sim S_2$ if there exists $A \in \SL_2(\Z)$ such that $S_1 = {}^t\!A S_2A$. Let $F \in S_k(\Gamma_0^{(2)}(N))$ with $k>2$ even and $N$ odd and squarefree.  Using the defining relation for Siegel cusp forms, we see that \begin{equation}\label{siegelinv}
a(F, S_1) = a(F,S_2)\qquad\text{if }S_1\sim S_2,
\end{equation}
thus showing that $a(F, S)$ only depends on the $\SL_2(\Z)$-equivalence class of the matrix $S$. Let $d < 0$ be a fundamental discriminant,    let $\Cl_K$ denote the ideal class group of $K= \Q(\sqrt{d})$, and let $w(K) \in \{2,4,6\}$ be the number of roots of unity in $K$.  It is well-known that the $\SL_2(\Z)$-equivalence classes of matrices in $\Lambda_2$ of discriminant $d$ are in natural bijective correspondence with the elements of $\Cl_K$. So, for any character $\Lambda$ of the finite group $\Cl_K$, we can define
\begin{equation}\label{rfdeflam}
B(F, \Lambda) = \sum_{S \in \Cl_K}a(F, S) \Lambda(S),
\end{equation}
which may be viewed as a Bessel period \cite[Prop. 3.5]{DPSS15}.

The space $S_k(\Gamma_0^{(2)}(N))$ has a natural subspace  $S_k(\Gamma^{(2)}_0(N))^{\rm CAP}$ spanned by the \emph{Saito-Kurokawa lifts}. If $F$ is a Saito-Kurokawa lift then $a(F,S)$ (for fundamental $S$) depends \emph{only} on $d=\disc(S)$ and is fairly well-understood. In particular, for $F \in S_k(\Gamma^{(2)}_0(N))^{\rm CAP}$, the Bessel period $B(F, \Lambda)$ vanishes whenever $\Lambda \neq 1_K$, where
$1_K$ denotes the trivial character of $\Cl_K$. Now suppose that $F$ is not a Saito-Kurokawa lift. Let $\phi$ be the adelization of $F$, and suppose that $\phi$ generates an irreducible automorphic representation $\pi$ of $\GSp_4(\A)$. B\"ocherer \cite{boch-conj} made the remarkable conjecture that $|B(F, 1_K)|^2 = A_F \cdot w(K)^{2} \cdot |d|^{k-1} \cdot L(\tfrac12, \pi \otimes \chi_{d})$ where $\chi_d$ is the quadratic character associated to $K/\Q$ and $A_F$ is a constant depending only on $F$.

More generally, let $\AI(\Lambda)$  be the automorphic representation of $\GL(2,\A)$ given by the automorphic induction of $\Lambda$ from $K$; it is generated by (the adelization of)
the dihedral modular form  $\theta_{\Lambda}(z) = \sum_{0 \ne \mathfrak{a} \subset \mathcal O_K}\Lambda(\mathfrak{a}) e^{2 \pi i N(\mathfrak{a})z}$ of weight 1. It is easy to check that $L(s, \pi \otimes \AI(\Lambda)) = L(s, \pi)  L(s, \pi \otimes \chi_{d})$. \emph{Now, assume that the refined Gan-Gross-Prasad conjecture \up{see \cite[Conjecture 1.12]{DPSS15} and \cite[(1.1)]{yifengliu}} for the pair $(\phi, \Lambda)$ holds true.} In fact, this conjecture for $\Lambda=1_K$ is now known thanks to work of Furusawa and Morimoto  \cite{furmori} (which combined with \cite{DPSS15} completes the proof of B\"ocherer's conjecture)  and the proof for general $\Lambda$ has been recently announced by the same authors. Then Theorem 1.13 of \cite{DPSS15} implies that under some mild assumptions,  \begin{equation}\label{bochrefinedeq}|B(F, \Lambda)|^2 =  c_F w(K)^2 \ |d|^{k-1}L(\tfrac12, \pi \times  \AI(\Lambda)),
 \end{equation} where $c_F$ is an explicit non-zero constant depending only on $F$ and $L(s, \pi \times  \AI(\Lambda))$ is the tensor product $L$-function of the spin (degree 4) $L$-function of $\pi$ and the standard (degree 2) $L$-function of $\AI(\Lambda)$. We show in Proposition \ref{p:ggpimpliesG} that a variant of \eqref{bochrefinedeq}, where the equality is replaced by an inequality, holds in a more general setup (assuming the refined Gan-Gross-Prasad conjecture).

 The identities \eqref{rfdeflam} and \eqref{bochrefinedeq}  demonstrate that the fundamental Fourier coefficients of Hecke eigenforms in $S_k(\Gamma_0^{(2)}(N))$ are intimately connected with central $L$-values of the degree 8 $L$-function $L(s, \pi \times  \AI(\Lambda))$ as $\Lambda$ varies over the ideal class characters of $K$. By inverting \eqref{rfdeflam}, we can write
 \begin{equation}\label{e:invertedfund}  a(F,S) = \frac{1}{|\Cl_K|}\sum_{\Lambda \in \widehat{\Cl_K}} B(F, \Lambda) \Lambda^{-1}(S), \end{equation} which expresses each fundamental $a(F,S)$ as a weighted average of the Bessel periods  $B(F, \Lambda)$.

  Now, combining \eqref{bochrefinedeq} and \eqref{e:invertedfund} with Theorem B, we obtain the following corollaries.

  \begin{corollary}\label{c:lvalueslarge} Let $\pi$ be a cuspidal automorphic representation of $\GSp_4(\A)$ that is not of Saito-Kurokawa type, such that $\pi$ arises from a form in $S_k(\Gamma_0^{(2)}(N))$ with $k>2$ even and $N$ odd and squarefree. Fix $\eps>0$. Assume the refined Gan--Gross-Prasad conjecture \cite[Conjecture 1.12]{DPSS15}. Then for all sufficiently large $X$, there are $\ge X^{1-\eps}$ negative fundamental discriminants $d$ with $|d|\asymp X$ such that for $K=\Q(\sqrt{d})$,
  \[ \tst \frac{1}{|\Cl_K|}\sum_{\Lambda \in \widehat{\Cl_K}} L(\tfrac12,  \pi \times  \AI(\Lambda)) \gg_\pi |d|^{-1/2} \exp\left(\frac{1}{82} \sqrt{\frac{\log |d|}{\log \log |d|}} \right).
  \]
  \end{corollary}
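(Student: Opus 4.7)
The plan is to derive this corollary as a formal consequence of Theorem B, via the inversion formula \eqref{e:invertedfund} and the refined Gan--Gross--Prasad identity \eqref{bochrefinedeq}.

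First, applying Theorem B to the Siegel cusp form $F \in S_k(\Gamma_0^{(2)}(N))$ that gives rise to $\pi$, for all sufficiently large $X$ one obtains $\geq X^{1-\eps}$ distinct odd squarefree integers $n \in [X, 2X]$ with associated fundamental matrices $S_n \in \Lambda_2$, $|\disc(S_n)| = n$, satisfying
\[ |a(F, S_n)|^2 \geq n^{k - 3/2} \exp\left(\frac{1}{41}\sqrt{\frac{\log n}{\log \log n}}\right). \]
Setting $d = \disc(S_n) = -n$ produces $\geq X^{1-\eps}$ distinct negative fundamental discriminants with $|d| \asymp X$; for each, let $K = \Q(\sqrt{d})$.

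Next, I would apply the triangle inequality to \eqref{e:invertedfund} and then Cauchy--Schwarz over $\Lambda \in \widehat{\Cl_K}$ to obtain
\[ |a(F, S_n)|^2 \leq \frac{1}{|\Cl_K|} \sum_{\Lambda \in \widehat{\Cl_K}} |B(F, \Lambda)|^2. \]
Since $\pi$ is not of Saito--Kurokawa type and the refined Gan--Gross--Prasad conjecture holds by assumption, the identity \eqref{bochrefinedeq} applies with $|B(F, \Lambda)|^2 = c_F w(K)^2 |d|^{k-1} L(\tfrac12, \pi \times \AI(\Lambda))$. Substituting and rearranging, and using $w(K) \leq 6$ together with the large-value bound for $|a(F, S_n)|^2$ above, one obtains
\[ \frac{1}{|\Cl_K|} \sum_{\Lambda \in \widehat{\Cl_K}} L(\tfrac12, \pi \times \AI(\Lambda)) \;\geq\; \frac{|a(F, S_n)|^2}{c_F w(K)^2 |d|^{k-1}} \;\gg_\pi\; |d|^{-1/2} \exp\left(\frac{1}{41}\sqrt{\frac{\log |d|}{\log \log |d|}}\right), \]
which is (strictly) stronger than the claimed bound.

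There is no substantive obstacle: the corollary is a direct combination of Theorem B and the already-established identities, with the use of Cauchy--Schwarz costing only a bounded factor that is easily absorbed (the halving from $\tfrac{1}{41}$ to the claimed $\tfrac{1}{82}$ in the exponential leaves ample room, although in fact no loss is incurred in the exponent). All of the real analytic work sits inside Theorem B.
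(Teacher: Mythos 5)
Your route is the same as the paper's: apply Theorem \ref{thm:large} (Theorem B) to an $F$ giving rise to $\pi$, use the inversion formula \eqref{e:invertedfund} together with Cauchy--Schwarz to get $|a(F,S)|^2 \le \frac{1}{|\Cl_K|}\sum_{\Lambda} |B(F,\Lambda)|^2$, and then convert Bessel periods into central $L$-values; the numerics (squaring the lower bound, dividing by $|d|^{k-1}$, and the constant $\tfrac1{41}$ versus the claimed $\tfrac1{82}$) are fine. The gap is in the conversion step. You invoke the exact identity \eqref{bochrefinedeq}, with a constant $c_F$ independent of $d$ and $\Lambda$, as if it followed for every fundamental discriminant from the refined GGP conjecture and the non-Saito--Kurokawa assumption alone. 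As the paper states, \eqref{bochrefinedeq} comes from Theorem 1.13 of \cite{DPSS15} only ``under some mild assumptions'' --- in particular $F$ a newform and local conditions such as $\big(\tfrac{d}{p}\big)=-1$ at primes $p\mid N$ --- and neither of these is guaranteed here: the form $F$ giving rise to $\pi$ need not be a newform, and the discriminants produced by Theorem B satisfy congruence and splitting conditions coming from the half-integral weight construction which need not match those local conditions.

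What your argument actually needs is only the one-sided bound $|B(F,\Lambda)|^2 \ll_F |d|^{k-1} L(\tfrac12,\pi\times\AI(\Lambda))$ with an implied constant uniform in $d$ and $\Lambda$; this is precisely Hypothesis G, and deducing it from the refined GGP identity in the required generality is the content of Proposition \ref{p:ggpimpliesG}, whose proof is not formal: one must bound the ramified local Bessel integrals $J_p(\phi_p,\Lambda_p)$ uniformly in $d$ and $\Lambda_p$ (via a change of variables reducing $d$ to finitely many classes in $\Q_p^\times/(\Q_p^\times)^2$ and the stability of the unipotent integral). So the step ``the identity \eqref{bochrefinedeq} applies'' should be replaced by an appeal to Proposition \ref{p:ggpimpliesG} (which is how the paper proceeds); as written, the constant implicit in your final $\gg_\pi$ could a priori depend on $d$ and $\Lambda$, and the claimed uniform lower bound does not follow.
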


By specializing further to the case of Yoshida lifts, we obtain the following application which is purely about central $L$-values of dihedral twists of \emph{classical} newforms.
\begin{corollary}\label{c:yoshidavalues}Let $k>2$ be an even integer. Let $N_1$, $N_2$ be two positive, squarefree integers such that $M = \gcd(N_1, N_2)>1$. Let $f$ be a holomorphic newform of weight $2k-2$ on $\Gamma_0(N_1)$ and $g$ be a holomorphic newform of weight $2$ on $\Gamma_0(N_2)$. Assume that for all primes $p$ dividing $M$ the Atkin-Lehner eigenvalues of $f$ and $g$ coincide. Fix $\eps>0$. Then for all sufficiently large $X$, there are $\ge X^{1-\eps}$ negative fundamental discriminants $d$ with $|d|\asymp X$ with the property that there exists  an ideal class group character $\Lambda$ of $K=\Q(\sqrt{d})$  such that
\[ \tst L(\tfrac12, f \times \AI(\Lambda))  L(\tfrac12, g \times \AI(\Lambda)) \gg_{f,g} |d|^{-1/2} \exp\left(\frac{1}{82} \sqrt{\frac{\log |d|}{\log \log |d|}} \right).\]
   \end{corollary}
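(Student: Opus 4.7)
The strategy is to produce a Yoshida lift $F$ attached to the pair $(f,g)$, apply Corollary \ref{c:lvalueslarge} to the cuspidal automorphic representation $\pi$ it generates, and then extract a single character $\Lambda$ from an average lower bound by pigeonholing.

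First I would verify that under the stated hypotheses there exists a non-zero Yoshida lift $F \in S_k(\Gamma_0^{(2)}(N))$, where $N$ is the squarefree integer $\mathrm{lcm}(N_1,N_2)$. The required ingredients are classical: starting from $f$ of weight $2k-2$ and $g$ of weight $2$, the theta correspondence for the dual pair $(\SO(4),\Symp_4)$ produces a holomorphic Siegel cusp form of weight $k$, and the matching of Atkin-Lehner signs at every prime $p \mid M$ is exactly the local obstruction one must remove to guarantee that the lift does not vanish identically; the requirement $M=\gcd(N_1,N_2)>1$ ensures that the associated representation of $\GSp_4(\A)$ is a non-CAP endoscopic representation, in particular \emph{not} a Saito--Kurokawa lift. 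Let $\pi$ be the irreducible cuspidal representation of $\GSp_4(\A)$ generated by (the adelization of) $F$. The defining feature of the Yoshida construction is the factorization of the degree-four spin $L$-function as $L(s,\pi) = L(s,f)L(s,g)$, which in turn yields, for every ideal class character $\Lambda$ of $K = \Q(\sqrt{d})$,
\begin{equation*}
L(\tst\tfrac12, \pi \times \AI(\Lambda)) \;=\; L(\tst\tfrac12, f \times \AI(\Lambda))\, L(\tst\tfrac12, g \times \AI(\Lambda)).
\end{equation*}

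Next I would apply Corollary \ref{c:lvalueslarge} to this $\pi$ (whose hypotheses are satisfied: $\pi$ is cuspidal, not of Saito--Kurokawa type, and comes from $S_k(\Gamma_0^{(2)}(N))$ with $k>2$ even and $N$ odd squarefree, and the refined Gan--Gross--Prasad conjecture is assumed throughout this subsection). This yields, for all large $X$, at least $X^{1-\eps}$ negative fundamental discriminants $d$ with $|d| \asymp X$ such that
\begin{equation*}
\frac{1}{|\Cl_K|}\sum_{\Lambda \in \widehat{\Cl_K}} L(\tst\tfrac12, f \times \AI(\Lambda))\, L(\tst\tfrac12, g \times \AI(\Lambda)) \;\gg_{f,g}\; |d|^{-1/2}\exp\!\left(\tst\frac{1}{82}\sqrt{\frac{\log|d|}{\log\log|d|}}\right).
\end{equation*}
Each summand is non-negative (by the Blasius--Lapid-type positivity at the central point, or simply because these are automorphic $L$-values of self-dual representations whose central values are known to be non-negative for our pair of $\GL_2 \times \GL_2$ factors), so a standard pigeonhole argument produces a single $\Lambda$ attaining the same lower bound, giving the desired conclusion.

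The only step that is not essentially bookkeeping is verifying that the Yoshida lift $F$ is indeed a non-zero element of $S_k(\Gamma_0^{(2)}(N))$ under the hypotheses stated, and that the representation $\pi$ it generates is a cuspidal, non-CAP representation to which Corollary \ref{c:lvalueslarge} genuinely applies. The non-vanishing under the Atkin--Lehner compatibility hypothesis, together with the $L$-function factorization, is well documented in the literature on Yoshida lifts (e.g.\ the original work of Yoshida and subsequent refinements by B\"ocherer--Schulze-Pillot), and I would simply cite these results; the bulk of the argument is then the application of Corollary \ref{c:lvalueslarge} combined with the pigeonhole.
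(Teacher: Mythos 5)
Your proposal is correct and follows essentially the same route as the paper: construct the Yoshida lift $F \in S_k(\Gamma_0^{(2)}(N))^{\rm Y}$ with $N=\mathrm{lcm}(N_1,N_2)$ (the paper simply cites \cite[Prop.\ 3.1]{sahaschmidt} for its existence, non-vanishing under the Atkin--Lehner compatibility, and the factorization $L(s,\pi\times\AI(\Lambda))=L(s,f\times\AI(\Lambda))L(s,g\times\AI(\Lambda))$), then apply Corollary \ref{c:lvalueslarge} and extract a single $\Lambda$ from the class-group average. Your pigeonhole step using non-negativity of the central values is exactly what the paper compresses into the phrase ``the triangle inequality.''
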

   Corollary \ref{c:yoshidavalues} strengthens the main theorem of \cite{sahaschmidt} which showed the existence of $\Lambda$ with (simultaneous) non-vanishing for  $L(\frac12, f \times \AI(\Lambda))$ and $L(\frac12, g \times \AI(\Lambda))$ and remarked: ``while our method gives a lower bound
on the number of non-vanishing twists, it does not give a lower bound on the
size of the non-vanishing $L$-value itself." Corollary \ref{c:yoshidavalues} successfully achieves this.

  \subsection{Fractional moments of $L$-values}\label{s:fractionalintro}
  Combining \eqref{bochrefinedeq} and  \eqref{e:invertedfund}, we can write
  \begin{equation}\label{e:invertedggp}|a(F,S)| \ll_F |d|^{\frac{k}{2}-\frac12}\frac{1}{|\Cl_K|} \sum_{\Lambda \in \widehat{\Cl_K}} \sqrt{L(\tfrac12, \pi \times  \AI(\Lambda))}.\end{equation}

 From the above, we see that the Generalized Lindel\"of hypothesis for $L(\tfrac12, \pi \times  \AI(\Lambda))$ implies that $|a(F,S)| \ll_{F, \eps} |\disc(S)|^{\frac{k}2 - \frac{1}{2} + \eps},$ which is still quite far from \eqref{e:ressald}.

Therefore, in order to prove Theorem C, we need to go beyond the bound obtained by a naive application of the Generalized Lindel\"of hypothesis. We do this by using Soundararajan's method \cite{Soundararajan} for bounding moments of $L$-functions. Assuming GRH, we prove the following bound (Theorem \ref{thm:momentbd}) which, thanks to \eqref{e:invertedggp}, implies Theorem C:
\[
\frac{1}{|\tmop{Cl}_K|}  \sum_{\Lambda \in \widehat{ \tmop{Cl}_K}} \sqrt{L(\tfrac12, \pi \times \AI(\Lambda))} \ll_\varepsilon \frac{1}{(\log |d|)^{\frac18-\varepsilon}}.
\]
The main contribution to the moments of $L(\tfrac12, \pi \times \AI(\Lambda))$ will come from its large values and we expect that these should be approximated by the large values of
$
\exp( \sum_{p^n < |d|} \frac{b_{\pi  \times \AI(\Lambda)}(p^n)}{ p^{n/2}}),
$
where $b_{\pi  \times \AI(\Lambda)}(n)$ is the $n$'th coefficient of the Dirichlet series of $\log L(s,\pi  \times \AI(\Lambda))$. For ease of discussion let us assume here that $d$ is prime, $N=1$, and $\pi$ transfers to a \emph{cuspidal} representation of $\GL_4$\footnote{If this is not the case (e.g., if $\pi$ corresponds to a Yoshida lift) the estimates below will be slightly different and the resulting bound for the moment predicted by the heuristic will also differ.}. Separately analyzing the primes, squares of primes and higher prime powers we show under GRH
\[
\sum_{p^n < |d|} \frac{b_{\pi  \times \AI(\Lambda)}(p^n)}{ p^{n/2}}= \sum_{p < |d|} \frac{b_{\pi}(p)b_{\AI(\Lambda)}(p)}{\sqrt{p}}-\frac{1}{2} \log \log |d|(1+o(1))
\]
where $b_{\pi}(p),b_{\AI(\Lambda)}(p)$ respectively denote the $p$'th coefficient of the Dirichlet series of $\log L(s,\pi)$ and $ \log L(s, \AI(\Lambda))$. For primes with $(\frac{d}{p})=1$ so that $p\mathcal O_K=\mathfrak p \overline {\mathfrak p}$, as $\Lambda$ varies over $\widehat{\tmop{Cl}_K}$, we expect that $b_{\AI(\Lambda)}(p)=\Lambda(\mathfrak p)+\Lambda(\mathfrak p)^{-1}$ behaves like the random variable $X_p+X_p^{-1}$ where $\{X_p\}_p$ are iid random variables uniformly distributed on the unit circle (if $(\frac{d}{p})=-1$, $b_{\AI(\Lambda)}(p)=0$). Consequently the sum on the r.h.s. above is modelled by the random variable  $\sum_{p < |d| \\  } \frac{b_{\pi}(p) (X_p+X_p^{-1})}{\sqrt{p}} 1_{(\frac{d}{p})=1}$, which can be shown to have a normal limiting distribution as $d \rightarrow \infty$ with mean $0$ and variance $2\sum_{\substack{p < |d| \\ }} \frac{b_{\pi}(p)^2 }{p} 1_{(\frac{d}{p})=1} \sim  \log \log |d|$, which we prove under GRH. The preceding discussion suggests
\[
\begin{split}\tst
\frac{1}{|\tmop{Cl}_K|}  \sum_{\Lambda \in \widehat{ \tmop{Cl}_K}} \sqrt{L(\tfrac12, \pi \times \AI(\Lambda))}
& \tst \asymp (\log |d|)^{-1/4} \mathbb E \left( \exp\left( \frac{1}{2} \sum_{p < |d|} \frac{b_{\pi}(p)(X_p+X_p^{-1}) 1_{(\frac{d}{p})=1}}{\sqrt{p}}   \right)\right) \\
& \asymp \tst  (\log |d|)^{-1/8}
\end{split}
\]
where in the last step we have used that the moment generating function of a normal random variable $X$ with mean $0$ and variance $\sigma^2$ is given by $\mathbb E (e^{zX})=e^{\frac12 z^2 \sigma^2}$. Remarkably, Soundararajan's method allows us to make this heuristic argument rigorous for the upper bound, up to the loss of a factor $(\log |d|)^{\varepsilon}$, which occurs due to a sub-optimal treatment of the large primes.


\subsection{Notations}\label{sec:notations}We use the notation
$A \ll_{x,y,z} B$
to signify that there exists
a positive constant $C$, depending at most upon $x,y,z$,
so that
$|A| \leq C |B|$.
 The symbol $\varepsilon$ will denote a small positive quantity. We write $A(x) = O_y(B(x))$ if there exists a positive real number $M$ (depending on $y$) and a real number $x_0$ such that $|A(x)| \le M |B(x)|$ for all $x \ge x_0$.

For a positive integer $n$ with prime factorization $n = \prod_{i=1}^k p_i^{\alpha_i}$, we define $\omega(n)=k$, $\Omega(n)=\sum_{i=1}^k \alpha_i$. We let $\mu(n)$ denote the M\"obius function, i.e., $\mu(n)=(-1)^{\omega(n)}$ if $\omega(n)=\Omega(n)$, and $\mu(n) =0$ otherwise. We say that $n$ is squarefree if $\mu(n) \neq 0$. We let $(a,b)$ or $\gcd(a,b)$ denote the greatest common divisor of $a$ and $b$.

We say that $d$ is a fundamental discriminant if $d$ is the discriminant of the field $\Q(\sqrt{d})$. For a fundamental discriminant $d$, we let $\chi_d$ be the associated quadratic Dirichlet character. Given any representation $\pi$ of a group, we let $\hat{\pi}$ denote the contragredient, and $V_\pi$ denote the representation space.
We use $\A$ to denote the ring of adeles over $\Q$ and we use $\A_F$ to denote the ring of adeles over $F$ for a general number field $F$. If $G$ is a reductive group such that the local Langlands correspondence is known for each $G(F_v)$ and $\pi$ is an automorphic representation of $G(\A_F)$, then we formally (as an Euler product over finite places) define the $L$-function $L(s, \rho(\pi)):= L(s, \pi, \rho)$ for each finite dimensional representation $\rho$ of the dual group. All $L$-functions in this paper will denote the finite part of the $L$-function (i.e., without the archimedean factors), so that for a number field $F$ and an automorphic representation $\pi$ of $\GL_n(F)$, we have $L(s, \pi) = \prod_{v<\infty} L(s, \pi_v)$. All $L$-functions will be normalized to take $s \mapsto 1-s$. For an integer $N$ we denote $L^N(s, \pi)=\prod_{v \nmid N}L(s, \pi_v)$.
Given a reductive group $G$ and two irreducible automorphic representations $\pi = \otimes_v \pi_v$ and $\sigma = \otimes_v \sigma_v$ of $G(\A_F)$, we say that $\pi$ and $\sigma$ are nearly equivalent if $\pi_v \simeq \sigma_v$ for all but finite many places $v$ of $F$.
\subsection{Acknowledgements}We thank Ralf Schmidt for helpful discussions concerning the material in Section \ref{s:lfunctionssiegel}. We thank Valentin Blomer and Farrell Brumley for forwarding us their preprint \cite{Blomer-Brumley}. We thank the anonymous referee for useful comments and corrections which improved this paper. This work was supported by the Engineering and Physical Sciences Research Council [grant number EP/T028343/1].
\section{Preliminaries on half integral weight forms}
\label{halfintmainsec}
The goal of this section is to set up some notation and lay out some key properties concerning cusp forms of half-integral weight on the complex upper half-plane.
\subsection{Notations}\label{halfintsec-notations}
The group $\SL_2(\R)$ acts on the upper half-plane $\H$ by $\gamma z = \frac{az+b}{cz+d},$ where $\gamma = \mat{a}{b}{c}{d} $ and $z=x+iy $. For a positive integer $N$, let $\Gamma_0(N)$ denote the congruence subgroup consisting of matrices $\mat{a}{b}{c}{d}$ in $\SL_2(\Z)$ such that $N$ divides $c$. For a complex number $z$, let $e(z)$ denote $e^{2\pi i z}$.

Let $\theta(z) = \sum_{n = -\infty}^\infty e(n^2 z)$ be the standard theta function on $\H$. If $A = \mat{a}{b}{c}{d} \in \Gamma_0(4)$, we have $\theta(Az) = j(A, z)\theta(z),$ where $j(A, z)$ is the so-called $\theta$-multiplier. For an explicit formula for $j(A, z)$, see~\cite[(1.10)]{Shimura1}. Let $S_{k+\frac{1}{2}}(4N)$ denote the space of holomorphic cusp forms of weight $k+\frac{1}{2}$  for the group $\Gamma_0(4N)$. In other words, a function $f : \H \rightarrow \C$ belongs to $S_{k+\frac{1}{2}}(4N)$ if
\begin{enumerate}

\item $f(Az) =  j(A, z)^{2k +1} f(z)$ for every $A = \mat{a}{b}{c}{d} \in \Gamma_0(4N)$,
\item $f$ is holomorphic,
\item $f$ vanishes at the cusps.

\end{enumerate}

Any $f \in  S_{k+\frac{1}{2}}(4N)$ has the Fourier expansion
$$\tst f(z) = \sum_{n > 0} a(f, n)e(nz).$$ We let $c(f,n)$ denote the ``normalized" Fourier coefficients, defined by
 $$c(f,n) = a(f,n)n^{ \frac14-\frac k2}.$$

For $f, g \in S_{k+\frac12}(4N)$, we define the Petersson inner product $\langle f, g\rangle$ by $$\tst \langle f, g\rangle = [\SL_2(\Z): \Gamma_0(4N)]^{-1} \int_{\Gamma_0(N) \bs \H}f(z) \overline{g(z)} y^{k + \frac12} \frac{dx dy}{y^2}.$$

\subsection{The Kohnen plus space and decomposition into old and newspaces}
Fix positive integers $k$, $N$ such that  $N$ is odd and squarefree. We recall the definition of the Kohnen plus space  $S^+_{k+\frac12}(4N) \subseteq S_{k+\frac12}(4N).$  The space $S^+_{k+\frac12}(4N)$ consists of all forms $f$ in  $S_{k+\frac12}(4N)$ for which $a(f,n) = 0$ whenever $n \equiv (-1)^{k+1}$ or $2 \mod{4}$.  According to the results of \cite{KohnenNewform}, there exists a canonically defined subspace $S^{+, \new}_{k+\frac12}(4N) \subset S^+_{k+\frac12}(4N)$ and a decomposition
\begin{equation}\label{e:oldnew}\tst S^+_{k+\frac12}(4N) = \bigoplus_{\substack{r, \ell \ge 1 \\ r \ell|N}} S^{+, \new}_{k+\frac12}(4\ell)|U(r^2),\end{equation} where we define \begin{equation}\label{defur2}\tst f|U(r^2) = r^{\frac12 - k}\sum_{n >0 }a(f,r^2n)e(nz).\end{equation} It is known~\cite[Prop. 1.5]{Shimura1} that if $(r, \ell)=1$, then $U(r^2)$ takes $S_{k + \frac12}(4\ell)$ to $S_{k + \frac12}(4r\ell)$. It is also useful to note that \begin{equation}\label{defur3}c(f|U(r^2), n) = c(f, r^2 n).\end{equation}

\subsection{Hecke operators and the Shimura correspondence}
For all primes $p$ coprime to $N$ there exist Hecke operators $T(p^2)$ acting on the space $S_{k+\frac12}(4N)$; see~\cite[Thm. 1.7]{Shimura1}. A \emph{newform} in $S^{+, \new}_{k+\frac12}(4N)$ is defined to be an element of $S^{+, \new}_{k+\frac12}(4N)$ that is an eigenfunction of the Hecke operators $T(p^2)$ for $p\nmid N$. The newforms are uniquely determined up to multiplication by non-zero complex numbers and are in fact also eigenforms for the operators $U(p^2)$ for all $p|N$  \cite[Theorem 2]{KohnenNewform}. The space $S^{+, \new}_{k+\frac12}(4N)$ has an orthogonal basis consisting of newforms.

According to the Shimura lifting \cite{Shimura1}, as refined by Kohnen in \cite{KohnenNewform}, there is an isomorphism \begin{equation}\label{e:heckeiso}S^{+, \new}_{k+\frac12}(4N)  \overset{\simeq}{\rightarrow} S_{2k}^{\new}(N)\end{equation} as \emph{Hecke modules}, where $S_{2k}^{\new}(N)$ is the orthogonal complement of the space of cuspidal oldforms of weight $2k$ for $\Gamma_0(N)$ as defined by Atkin--Lehner \cite{AtkinLehner}. The Shimura lifting \eqref{e:heckeiso} takes each newform in $S^{+, \new}_{k+\frac12}(4N)$ (as defined above) to a newform (in the sense of Atkin--Lehner \cite[Lemma 18]{AtkinLehner}) in $S_{2k}^{\new}(N)$ with the same Hecke eigenvalues. More precisely, if $f \in S^{+, \new}_{k+\frac12}(4N)$ is a newform, and $g \in  S_{2k}^{\new}(N)$ is the Shimura lift of $f$ according to \eqref{e:heckeiso}, then for each prime $p \nmid N$ there exists a real number $\lambda_f(p) \in [-2, 2]$ (by Deligne's bound for the normalized Hecke eigenvalue) such that \[f|T(p^2) = \lambda_f(p) p^{k - \frac12} f, \quad g|T(p) = \lambda_f(p) p^{k - \frac12} g.\]

In view of \eqref{e:oldnew} and the fact that the operators $U(p)$ with $p|N$ commute with $T(p^2)$, $p \nmid N$, a basis of $S^{+}_{k+\frac12}(4N)$ consisting of eigenforms for $T(p^2)$, $p\nmid N$, is given by \begin{equation}\label{e:heckebasis}\tst \B_{k+\frac12,4N} = \bigcup_{\substack{r, \ell \ge 1 \\ r \ell|N}} \{f|U(r^2): f\in \B^\new_{k+\frac12,4\ell}\},
\end{equation} where $\B^\new_{k+\frac12,4\ell}$ is an (orthogonal) basis of  $S^{+, \new}_{k+\frac12}(4\ell)$ consisting of newforms. Note however that all members of $\B_{k+\frac12,4N}$ are not necessarily orthogonal to each other.
The following result will be useful for us; recall the definitions of $\Omega(n)$ and $\omega(n)$ from Section \ref{sec:notations}.

\begin{lemma}\label{l:fourierUr2} Let $r$, $\ell$ be positive, odd, squarefree integers with $(r, \ell)=1$ and let $f \in S^{+, \new}_{k+\frac12}(4\ell)$ be a newform. Then for any odd squarefree integer $n$, putting $d=(-1)^kn$, we have \[\tst c(f|U(r^2), n) = c(f, r^2n) = c(f,n) \prod_{p|r}\left(\lambda_f(p) - \frac{1}{\sqrt{p}}\left(\frac{d}{p} \right) \right).\]
Additionally, for any odd integer $r \ge 1$ with $(r,\ell)=1$ we have
\[
|c(f,r^2n)| \le 3^{\Omega(r)} |c(f,n)|.
\]
\end{lemma}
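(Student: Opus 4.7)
The first equality $c(f|U(r^2), n) = c(f, r^2 n)$ is immediate from \eqref{defur3}, so the substantive content is the product formula for $c(f, r^2 n)$ and the bound. My plan is to iterate the Hecke relation for $T(p^2)$ one prime at a time. Since $f$ is a newform in $S^{+,\new}_{k+\frac12}(4\ell)$ and $r$ is odd with $(r, \ell)=1$, every prime $p \mid r$ satisfies $p \nmid 4\ell$, hence $f$ is an eigenform for $T(p^2)$ with $f \mid T(p^2) = \lambda_f(p) p^{k-\frac12} f$.

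Unpacking this eigenvalue equation using Shimura's formula \cite[Thm.~1.7]{Shimura1} for the action of $T(p^2)$ on Fourier coefficients and then dividing through by $(p^2 m)^{k/2 - 1/4}$ to pass to the normalized coefficients, I obtain the master recursion
\[
c(f, p^2 m) \;=\; \left(\lambda_f(p) - \frac{1}{\sqrt p}\left(\frac{(-1)^k m}{p}\right)\right) c(f, m) \;-\; c(f, m/p^2),
\]
valid for every $m \ge 1$, with the convention $c(f, m/p^2) = 0$ unless $p^2 \mid m$.

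To derive the product formula, I would iterate this relation through the primes $p_1, \dots, p_s$ dividing the squarefree $r$. At each step the current index is $n$ times a product of squares of primes distinct from $p$, so the tail term $c(f, m/p^2)$ vanishes (the quotient is not a positive integer, since $n$ is squarefree and the remaining primes differ from $p$), and the Legendre symbol $\left(\frac{(-1)^k m}{p}\right)$ collapses to $\left(\frac{d}{p}\right)$ because the additional prime-square factors are coprime to $p$. Multiplying the resulting factors over all $p \mid r$ yields the claimed formula.

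For the bound $|c(f, r^2 n)| \le 3^{\Omega(r)} |c(f, n)|$ with general odd $r$, I would again iterate through the prime divisors of $r$, but now allowing higher multiplicities $j_p$. Writing the base index at the $p$-step as $p^a N$ with $p \nmid N$ and $a \in \{0, 1\}$ (using that $n$ is squarefree), the master recursion specializes to the Chebyshev-type three-term relation $c(f, p^{j+2} N) = \lambda_f(p) c(f, p^j N) - c(f, p^{j-2} N)$ for $j \ge 2$, with explicit base cases at $j = 0, 1$ coming from the $(\frac{(-1)^k m}{p})$ term. Deligne's bound $|\lambda_f(p)| \le 2$ together with an induction based on $2 \cdot 3^{j-1} + 3^{j-2} \le 3^j$ then yields $|c(f, p^{2 j_p + a} N)| \le 3^{j_p} |c(f, p^a N)|$, and chaining this estimate over all $p \mid r$ gives the claim. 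The only subtlety — and the place needing attention — is the per-step bookkeeping that certifies the Legendre-symbol simplification and the vanishing of $c(f, m/p^2)$; once those are in hand the argument is a direct iteration.
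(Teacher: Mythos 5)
Your proposal is correct and takes essentially the same route as the paper: the paper simply cites Shimura's Corollary 1.8 (i)--(ii), which packages exactly the normalized recursion $c(f,p^2m)=\bigl(\lambda_f(p)-\tfrac1{\sqrt p}\bigl(\tfrac{(-1)^km}{p}\bigr)\bigr)c(f,m)-c(f,m/p^2)$ that you re-derive from Theorem 1.7 and the $T(p^2)$-eigenvalue relation, and then iterates over the primes dividing $r$ and runs the same induction using $|\lambda_f(p)|\le 2$ and $2\cdot 3^{m}+3^{m-1}\le 3^{m+1}$. The only difference is cosmetic (deriving the recursion by hand versus quoting it), and your extra bookkeeping about the vanishing tail term and the collapse of the Legendre symbol is exactly what makes the cited corollary apply.
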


\begin{proof}The first statement follows from \cite[Corollary 1.8 (i)]{Shimura1}. Using that $|\lambda_f(p)|\le 2$ and applying \cite[Corollary 1.8 (ii)]{Shimura1} we will establish the second claim  by the following simple induction argument.
It suffices to show for each $p\nmid 2\ell$ that
\[
|c(f,p^{2m} n)| \le 3^m |c(f,n)|.
\]
The case $m=0$ is trivial and $m=1$ follows from the first claim of the lemma.
By \cite[Corollary 1.8 (ii)]{Shimura1} we have for any $m \ge 1$
\[
c(f,p^{2m+2}n) =\lambda_f(p) c(f,p^{2m}n)-(-1)^{k(p^2-1)/2} c(f,p^{2m-2} n).
\]
Hence, for $m \ge 1$ we get that
\[
\begin{split}
|c(f,p^{2m+2}n)| &\le 2 |c(f,p^{2m}n)|+|c(f,p^{2m-2 }n)|\\
& \le  (2 \cdot 3^{m}+3^{m-1}) |c(f,n)|\le 3^{m+1} |c(f,n)|.
\end{split}\]\end{proof}

\subsection{An explicit version of Waldspurger's formula}A well-known formula of Waldspurger \cite{Waldscentral} that was refined and made explicit in special cases by Kohnen \cite{Kohnen85}, expresses the squares of Fourier coefficients of half-integral weight eigenforms in terms of central $L$-values. We state a version of it below for elements of the basis \eqref{e:heckebasis}.
\begin{proposition}\label{p:walds}Let $r$, $\ell$ be positive, odd, squarefree integers with $(r, \ell)=1$. Let $f$ be a newform in  $S^{+, \new}_{k+\frac12}(4\ell)$ and let $g \in S_{2k}^{\new}(\ell)$ be the Shimura lift of $f$. Then, for any squarefree positive integer $n$ with $(n, 4\ell)=1$, and $d=(-1)^kn$, we have \[\tst
\frac{|c(f|U(r^2), n)|^2}{\langle f, f\rangle} = 2^{\omega(\ell)} \frac{L(\frac12, g \otimes \chi_d )}{\langle g, g \rangle } \frac{(k-1)!}{\pi^k} \left(\prod_{p|r}\left(\lambda_f(p) - \frac{1}{\sqrt{p}}\left(\frac{d}{p} \right) \right)\right)^2  \]
provided that \begin{enumerate}
\item $d \equiv 1 \pmod{4}$
\item for each prime $p|\ell$, we have $\left(\frac{d}{p}\right)=w_p$, where $w_p$
is the eigenvalue for the  Atkin-Lehner operator at $p$ acting on $g$.
\end{enumerate}
If either of the two conditions above are not met, then $c(f|U(r^2), n) = 0$.
\end{proposition}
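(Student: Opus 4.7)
The plan is to reduce the identity to Kohnen's explicit form of the Waldspurger formula for newforms in the plus space, by first peeling off the $U(r^2)$-dependence.

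First I would apply Lemma \ref{l:fourierUr2}, which (since $(r,\ell)=1$ and $n$ is odd squarefree) yields
\[
c(f|U(r^2),n) \;=\; c(f,n)\prod_{p|r}\left(\lambda_f(p)-\frac{1}{\sqrt{p}}\left(\frac{d}{p}\right)\right).
\]
Squaring, the product factor matches exactly the one on the right-hand side of the proposition, so the problem reduces to proving
\[
\frac{|c(f,n)|^2}{\langle f,f\rangle} \;=\; 2^{\omega(\ell)}\,\frac{(k-1)!}{\pi^k}\,\frac{L(\tfrac12,g\otimes\chi_d)}{\langle g,g\rangle}
\]
whenever conditions (i) and (ii) hold, together with the vanishing $c(f,n)=0$ when either fails.

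For the identity, I would invoke Kohnen's explicit Waldspurger formula for newforms in the plus space of odd squarefree level, namely \cite[Corollary 1]{KohnenNewform} (extending the level-one case \cite{Kohnen85}). In its native formulation this reads
\[
\frac{|a(f,n)|^2}{\langle f,f\rangle} \;=\; 2^{\omega(\ell)}\,\frac{(k-1)!}{\pi^k}\,n^{k-1/2}\,\frac{L(\tfrac12,g\otimes\chi_d)}{\langle g,g\rangle},
\]
and passing to the normalised coefficients $c(f,n)=a(f,n)\,n^{1/4-k/2}$ absorbs the factor $n^{k-1/2}$ and produces the desired equality. The vanishing assertion splits into two subcases. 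If (i) fails, then since $n$ is odd we have $n\equiv (-1)^{k+1}\pmod 4$, so $a(f,n)=0$ directly from the defining property of $S^+_{k+\frac12}(4\ell)$. If (ii) fails, the Atkin--Lehner sign condition at some prime $p\mid\ell$ is violated and \cite[Theorem 2]{KohnenNewform} forces $c(f,n)=0$. Either way, the identity of Lemma \ref{l:fourierUr2} propagates the vanishing to $c(f|U(r^2),n)$.

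The main obstacle is essentially bookkeeping rather than conceptual: one must verify that the Petersson normalisation used in Section \ref{halfintsec-notations} (with the $[\SL_2(\Z):\Gamma_0(4\ell)]^{-1}$ factor) agrees with Kohnen's, and that no stray powers of $|d|$, of $\ell$, or of the index $[\SL_2(\Z):\Gamma_0(4\ell)]$ survive the translation between conventions. Since these quantities appear symmetrically on both sides of Kohnen's identity, once the Petersson conventions are aligned the only substantive check is the power of $n$, which cancels as indicated above.
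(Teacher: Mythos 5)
Your proposal is correct and follows essentially the same route as the paper, which likewise deduces the identity from Lemma \ref{l:fourierUr2} together with Kohnen's explicit Waldspurger formula for newforms in the plus space of odd squarefree level and the vanishing coming from the plus-space/Atkin--Lehner sign conditions. The only quibble is with the references: the explicit formula is Corollary 1 of \cite{Kohnen85} (the squarefree-level paper, not the level-one case), and the vanishing when condition (ii) fails is the content the paper attributes to Proposition 4 of \cite{KohnenNewform} rather than Theorem 2.
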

\begin{proof}This follows from Corollary 1 of \cite{Kohnen85}, Proposition 4 of \cite{KohnenNewform},  and Lemma \ref{l:fourierUr2}.
\end{proof}
\subsection{Estimates on moments of Fourier coefficients}
\begin{proposition} \label{prop:moments}
Let $f \in S_{k+\frac12}^+(4N)$ where $N$ is odd and squarefree. Then there exists $M\ge 2$ such that for all sufficiently large $X$,
  \begin{equation} \label{eq:RS}
  \sum_{\substack{X \leq n \leq MX \\ (n,2N)=1 }} |c(f,n)|^2 \mu^2(n) \asymp_{f, M} X
   \end{equation}
   and for any $\varepsilon>0$
   \begin{equation} \label{eq:4th}
         \sum_{\substack{X \leq n \leq 2X  \\ (n,2N)=1}} |c(f,n)|^4 \mu^2(n) \ll_{f, \eps} X^{1+\varepsilon}.
   \end{equation}
 \end{proposition}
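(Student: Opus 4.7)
The approach is to combine the basis decomposition \eqref{e:heckebasis}, Lemma \ref{l:fourierUr2}, and the Waldspurger-Kohnen identity (Proposition \ref{p:walds}) to reduce both moments to moments of central $L$-values of quadratic twists. Write $f = \sum_i \alpha_i f_i|U(r_i^2)$ with each $f_i \in S^{+,\new}_{k+\frac12}(4\ell_i)$ a newform and $r_i \ell_i \mid N$. For squarefree $n$ coprime to $2N$, Lemma \ref{l:fourierUr2} together with Deligne's bound $|\lambda_{f_i}(p)| \le 2$ gives $|c(f, n)| \ll_f \sum_i |c(f_i, n)|$, and Proposition \ref{p:walds} yields $|c(f_i, n)|^2 \ll_{f_i} L(\tfrac12, g_i \otimes \chi_d)$ where $g_i$ is the Shimura lift of $f_i$ and $d = (-1)^k n$.

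Both upper bounds then reduce to known moment estimates for quadratic twists. The upper bound in \eqref{eq:RS} follows from the classical first-moment estimate $\sum_{|d| \le X,\,d\ \text{fund.}} L(\tfrac12, g_i \otimes \chi_d) \ll X$ (Bump-Friedberg-Hoffstein, Iwaniec). For \eqref{eq:4th}, I would apply $|c(f_i, n)|^4 \ll L(\tfrac12, g_i \otimes \chi_d)^2$ and invoke the second-moment bound $\sum_{|d| \le X} L(\tfrac12, g_i \otimes \chi_d)^2 \ll X(\log X)^{O(1)}$ due to Soundararajan-Young, yielding the desired $X^{1+\eps}$.

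The lower bound in \eqref{eq:RS} cannot be obtained by the basis reduction alone because of possible cancellation between the components $c(f_i, n)$ in the decomposition of $c(f, n)$. Instead, I would invoke the Rankin-Selberg convolution: $L(s, f \times \bar f)$ is holomorphic for $\Re(s) \ge 1$ except for a simple pole at $s = 1$ with residue a positive multiple of $\langle f, f\rangle$, and a standard Tauberian argument gives $\sum_{n \le X} |c(f, n)|^2 \sim c_f X$ with $c_f > 0$. To restrict to squarefree $n$ coprime to $2N$, decompose $n = a^2 m$ with $m$ squarefree, control the contribution from $a \ge 2$ by the Hecke-type bound $|c(f, a^2 m)| \ll_\varepsilon a^\varepsilon |c(f, m)|$ coming from Lemma \ref{l:fourierUr2}, and handle the condition $\gcd(n, 2N) > 1$ by a local Euler factor argument. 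Taking $M$ sufficiently large then ensures that the restricted sum captures a positive proportion of the total.

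The delicate step, and the main obstacle, is the final extraction in the lower bound: one needs quantitative control over the Rankin-Selberg Euler factors to show that the contribution of the ``bad'' $n$ (either non-squarefree or sharing a prime with $2N$) constitutes a strict proportion of the total Rankin-Selberg mass $c_f X$, so that the good $n$ necessarily contribute $\gg X$ once $M$ is chosen large enough.
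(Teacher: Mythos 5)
Your reductions for the upper bounds are essentially sound: bounding \eqref{eq:4th} via the basis \eqref{e:heckebasis}, Lemma \ref{l:fourierUr2} and Proposition \ref{p:walds}, and then a second-moment bound for quadratic twists, is exactly the paper's route (the paper invokes Heath-Brown's quadratic large sieve, with Soundararajan--Young as an alternative reference), and your first-moment argument for the upper bound in \eqref{eq:RS} works, though the paper gets it more cheaply from the boundedness of $y^{k+1/2}|f(z)|$ and a Parseval computation, without any appeal to Waldspurger.

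The genuine gap is the lower bound in \eqref{eq:RS}, which is the hard part of the proposition, and your sieve of the Rankin--Selberg asymptotic does not close. First, the input you claim, $|c(f,a^2m)|\ll_\varepsilon a^{\varepsilon}|c(f,m)|$, is not what Lemma \ref{l:fourierUr2} provides: that lemma applies to \emph{newforms} and gives the factor $3^{\Omega(a)}$, which for odd $a$ can be as large as $a$ (take $a$ a power of $3$), so that $\sum_{a\ge 2}9^{\Omega(a)}a^{-2}$ diverges and the non-squarefree contribution cannot be controlled this way; and to use the lemma at all you must first decompose $f$ into newform components $f_i|U(r_i^2)$, which reintroduces precisely the cancellation among the $c(f_i,n)$ that you correctly identified as the obstruction to a componentwise lower bound. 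Second, even granting a bound of the shape $a^{\varepsilon}$, your argument would only show that the contribution of non-squarefree $n$ (and of $n$ with $\gcd(n,2N)>1$) is $O_f(X)$ with an uncontrolled implied constant, which cannot be compared with the Rankin--Selberg main term $c_fX$: there is no parameter forcing the ``bad'' mass to be a strictly smaller proportion, and the vague ``local Euler factor argument'' supplies no mechanism for a general, non-eigen $f$ in the plus space, where a priori almost all of the mass could sit on $n$ divisible by $4$ or by primes of $N$, or on non-squarefree $n$. The paper does not attempt this sieve: it imports the restricted lower bound
\begin{equation*}
\sum_{(n,2N)=1}|c(f,n)|^2\mu^2(n)\,e^{-n/(X\sqrt{M})}\ \gg_f\ X\sqrt{M},
\end{equation*}
established in the proof of Proposition 3.7 of \cite{saha-2013} (a genuinely nontrivial result, essentially the content of the non-vanishing theorem for fundamental coefficients), and then only needs the Parseval upper bound to truncate the tail $n\ge MX$ and the range $n\le X$, choosing $M$ large. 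Your proposal contains no substitute for that key input, so the lower bound remains unproved.
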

\begin{proof}We first prove \eqref{eq:RS}. For the upper bound we use that $y^{k+1/2}|f(z)|$ is bounded on $\mathbb H$ and hence we have
 \begin{equation} \label{eq:parseval}\tst
 \begin{split}
 \sum_{\substack{X \le n \le 2X \\ (n,2N)=1}} |c(f,n)|^2 \mu^2(n) \le \sum_{X \le n \le 2X} |c(f,n)|^2 \ll & \tst \frac{1}{X^{k-1/2}} \sum_{n} |c(f, n)|^2 n^{k - \frac12} e^{-4\pi n /X}\\
 =& \tst \frac{1}{X^{k-1/2}} \int_0^1 |f(x+i/X)|^2 dx \ll_f X.
 \end{split}
 \end{equation} For the lower bound, we use a result obtained in the proof of \cite[Proposition 3.7]{saha-2013}, which gives for any $M \ge 1$ that
 \[
 \sum_{\substack{ (n,2N)=1}} |c(f, n)|^2 \mu^2(n)  e^{-n/(X\sqrt{M})} \gg_f X\sqrt{M}.
 \]
 Using \eqref{eq:parseval} along with partial summation we can bound the tail end of the sum as follows
 \[
  \sum_{\substack{n \ge M X\\ (n,2N)=1}} |c(f, n)|^2 \mu^2(n) e^{-n/(X\sqrt{M})} \ll_f M e^{-\sqrt{M}} X.
 \]
Combining the two bounds above we have for $M$ sufficiently large that
 \[\tst
 \begin{split}
   \sum_{\substack{n \le M X\\ (n,2N)=1}} |c(f,n)|^2 \mu^2(n)   & \ge    \sum_{\substack{n \le MX\\ (n,2N)=1}} |c(f, n)|^2 \mu^2(n)  e^{-n/(X\sqrt{M})} \\
   &= \sum_{\substack{n \ge 1\\ (n,2N)=1}} |c(f, n)|^2 \mu^2(n) e^{-n/(X\sqrt{M})} +O_f(M e^{-\sqrt{M}} X)
   \gg_f X \sqrt{M}.
   \end{split}
 \]
 Finally, we note that by \eqref{eq:parseval} the contribution from terms to the l.h.s. above with $n \le X$ is $O_f(X)$, which completes the proof of the lower bound in \eqref{eq:RS}.

For the proof of \eqref{eq:4th}, we use \eqref{e:heckebasis} to reduce to the case $f = f_1|U(r^2)$ where $f_1 \in S^{+, \new}_{k+\frac12}(4\ell)$ is a newform with $r\ell|N$. Using Proposition \ref{p:walds}, it now suffices to prove that \[
 \sumfund_{|d|\le X} L(\tfrac12, g \otimes \chi_d)^2 \ll_{g, \eps} X^{1+\varepsilon}
 \] where $g$ is the Shimura lift of $f_1$ and the sum is over fundamental discriminants $d$. This follows from the approximate functional equation and Heath-Brown's quadratic large sieve \cite{HB}, using a straightforward modification of the proof of \cite[Theorem 2]{HB} (see also \cite[Corollary 2.5]{SoundYoung}).
\end{proof}

\section{Sign changes for coefficients of half-integral weight forms}\label{s:signs}

\subsection{Statement of main result}
Throughout this section let $k \ge 2$ be an integer and $N \ge 1$ be odd and squarefree.
The main theorem to be proved in this section is
\begin{theorem} \label{thm:signs}
  Let $f \in S_{k+\frac12}^+(4N)$ be a fixed cusp form whose Fourier coefficients $c(f,n)$ are all real. Then there exists $M \ge 2$ such that given any $\varepsilon>0$,
  the sequence $\{ c(f,n) \}_{\substack{X \leq n \leq MX \\ 2n \text{ is  squarefree}\\ (n,N)=1}}$ has at least $ \gg_{f,M,\varepsilon} X^{1-\varepsilon}$ sign changes.
\end{theorem}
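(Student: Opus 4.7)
The strategy is to exhibit many disjoint short intervals in $[X, MX]$ each of which contains a sign change of the target sequence. Set $\mathbf{1}_{\mathrm{good}}(n) := \mu^2(2n)\mathbf{1}_{(n,N)=1}$, and for $I \subset [X, MX]$ write
\[
S(I) := \sum_{n \in I} c(f,n)\mathbf{1}_{\mathrm{good}}(n), \qquad P(I) := \sum_{n \in I} |c(f,n)|\mathbf{1}_{\mathrm{good}}(n).
\]
Any $I$ with $|S(I)| < P(I)$ must contain a sign change. Fix $\eta \in (0,\varepsilon)$, set $H = X^\eta$, and partition $[X, MX]$ into $K \asymp X^{1-\eta}$ disjoint intervals of length $H$.

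The main step will be to prove a short-interval cancellation bound of Matom\"aki--Radziwi{\l}{\l} type with a small power saving: there exists $\delta = \delta(\eta) > 0$ such that
\[
\sum_I |S(I)| \ll X^{1-\eta\delta}.
\]
I plan to follow the approach of Lester--Radziwi{\l}{\l} \cite{LesterRadziwill2019signs}, which addresses the case $N = 1$. First, decompose $f$ using the basis \eqref{e:heckebasis} as a linear combination of $f_0|U(r^2)$ with $f_0 \in S^{+,\new}_{k+\frac12}(4\ell)$ a newform and $r\ell \mid N$. Lemma \ref{l:fourierUr2} then expresses, for odd squarefree $n$ coprime to $N$,
\[
c(f_0|U(r^2), n) = c(f_0, n) \prod_{p \mid r}\Bigl(\lambda_{f_0}(p) - \tfrac{1}{\sqrt{p}}\bigl(\tfrac{(-1)^k n}{p}\bigr)\Bigr),
\]
reducing the task to cancellation for the newform sums $\sum_{n \in I} c(f_0, n)\mathbf{1}_{\mathrm{good}}(n)$ twisted by bounded Kronecker-symbol factors. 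Passing via the Shimura correspondence to the multiplicative Hecke eigenvalues of the integral-weight lift $g \in S_{2k}^{\new}(\ell)$, I would then apply a variant of the Matom\"aki--Radziwi{\l}{\l} short-interval theorem (with a power saving for the resulting multiplicative functions); the squarefree and coprimality conditions would be imposed by M\"obius inversion, and the Kronecker-symbol twists dealt with by splitting into residue classes modulo a small divisor of $N$.

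The second ingredient is a lower bound for $\sum_I P(I)$. Applying the H\"older inequality $\sum |c|^2 \le (\sum |c|)^{2/3}(\sum |c|^4)^{1/3}$ to the moment bounds of Proposition \ref{prop:moments} (with a free parameter $\varepsilon' > 0$) yields
\[
\sum_I P(I) = \sum_{n} |c(f,n)|\mathbf{1}_{\mathrm{good}}(n) \gg X^{1-\varepsilon'/2}.
\]
Combining this with the elementary bound $\sum_I P(I)^2 \le H \sum_n |c(f,n)|^2 \mathbf{1}_{\mathrm{good}}(n) \ll XH$ (from Cauchy--Schwarz within each $I$ together with Proposition \ref{prop:moments}), and writing $\mathcal{E}$ for the set of intervals with $|S(I)| < P(I)$, a further Cauchy--Schwarz gives
\[
|\mathcal{E}|^{1/2}(XH)^{1/2} \ge \sum_{I \in \mathcal{E}} \bigl(P(I) - |S(I)|\bigr) = \sum_I P(I) - \sum_I |S(I)| \gg X^{1-\varepsilon'/2} - X^{1-\eta\delta}.
\]
Choosing $\varepsilon'$ sufficiently small that $\varepsilon'/2 < \eta\delta$ and $\varepsilon' + \eta < \varepsilon$ (possible since $\varepsilon'$ is a free parameter and $\eta < \varepsilon$), one obtains $|\mathcal{E}| \gg X^{1-\varepsilon'-\eta} \gg X^{1-\varepsilon}$, producing the desired number of sign changes.

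The principal obstacle will be the short-interval cancellation of the first step with a quantitative power saving $\delta > 0$, for a general (non-Hecke-eigen) $f$ at level $4N$. Pushing the Lester--Radziwi{\l}{\l} strategy through the basis decomposition \eqref{e:heckebasis} requires simultaneously handling the oldform contributions $f_0|U(r^2)$, the Kronecker-symbol twists from Lemma \ref{l:fourierUr2}, and the squarefree/coprimality sieve, all while preserving a uniform power saving in the underlying Matom\"aki--Radziwi{\l}{\l} input for multiplicative functions associated to the Shimura lift. The remaining ingredients are relatively routine adaptations of standard techniques.
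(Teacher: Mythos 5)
Your combinatorial skeleton is sound and close to the paper's: the observation that an interval with $|S(I)|<P(I)$ forces a sign change, the lower bound on $\sum_I P(I)$ via H\"older and the moment estimates of Proposition \ref{prop:moments}, and the final count of sign-change intervals all match the paper's Lemma \ref{lem:keybd2}, Proposition \ref{prop:lowerbd} and the deduction of Theorem \ref{thm:signs} (your Cauchy--Schwarz counting with $\sum_I P(I)^2\ll XH$ is a harmless variant of the paper's truncation-at-$|c(f,n)|\le X^{\varepsilon}$ argument; also the displayed ``$=$'' should be ``$\ge$'', which only helps you). The quantitative shape you need, $\sum_I|S(I)|\ll X^{1-\eta\delta}$ with $H=X^{\eta}$, is exactly what Proposition \ref{prop:cancellation} supplies ($\ll X\sqrt{y}$ for $y\le X^{1/205}$).

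The genuine gap is in how you propose to prove that cancellation. You suggest passing through the basis \eqref{e:heckebasis}, then ``via the Shimura correspondence to the multiplicative Hecke eigenvalues of $g\in S_{2k}^{\new}(\ell)$'' and applying a Matom\"aki--Radziwi{\l}{\l}-type theorem for multiplicative functions. This cannot work: the coefficients $c(f_0,n)$ of a half-integral weight newform restricted to squarefree $n$ are \emph{not} multiplicative and are \emph{not} determined by the Hecke data of the Shimura lift. The Shimura/Hecke relations (Lemma \ref{l:fourierUr2}, \cite[Cor. 1.8]{Shimura1}) only relate $c(f_0,m^2n)$ to $c(f_0,n)$ for fixed squarefree $n$; across distinct squarefree $n$ the values are, by Waldspurger/Kohnen (Proposition \ref{p:walds}), essentially $\pm\sqrt{L(\frac12,g\otimes\chi_d)}$ with signs that carry independent arithmetic information --- indeed detecting those signs is the whole content of the theorem, so no input about the multiplicative function $\lambda_g$ can produce the required cancellation of $\sum_{n\in I}c(f_0,n)$. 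This is precisely why the paper (and \cite{LesterRadziwill2019signs} in the level-one case, which you partially misattribute: their cancellation step is not a multiplicative-function argument either) proves Proposition \ref{prop:cancellation} analytically: square the short sum, reduce by Cauchy--Schwarz to shifted convolution sums $\sum_n c(f,n)c(f,n+h)e(nv/r)W(\cdot)$, and estimate these with a power saving (Proposition \ref{prop:scp}) via Jutila's circle method with moduli divisible by $4N$ so that modularity at level $4N$ applies, together with the Weil bound for Kloosterman sums; the squarefree condition is handled by the sieve splitting $\mu^2=\mu^2_{\le Y}+\mu^2_{>Y}$ and Lemma \ref{lem:sumbd}, not inside a multiplicative-function framework. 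Without replacing your Matom\"aki--Radziwi{\l}{\l} step by an argument of this shifted-convolution type (uniform in the additive twists modulo $r\le X^{1/102}$ coming from the sieve), the central estimate of your plan is unsupported.
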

The main novelty here is that this result holds for all cusp forms $f \in S_{k+1/2}^+(4N)$, not just Hecke eigenforms, and this is crucial for our later application. Previously it was not apparently even known that there are infinitely many sign changes of $c(f,n)$ as $n$ ranges over squarefree integers for $f \in S_{k+\frac12}^+(4N)$.

Our proof builds upon the methods developed in \cite{MatomakiRadziwill, LesterRadziwill2019signs} and relies upon the following two propositions. The first of which shows that the size of $|c(f,n)|$ is relatively well-behaved for most short intervals $[x,x+y]$.

\begin{proposition} \label{prop:lowerbd}
 Let $f \in S_{k+\frac{1}{2}}^+(4N)$.
There exists $M \ge 2$ such that given any $\varepsilon >0$ and $2 \le y \le X/2$ there are $\gg_{f,M,\varepsilon} X^{1-\frac32 \varepsilon}$ integers $X \le x \le MX$
such that
  $$
  \sum_{\substack{x \leq n \leq x + y  \\ (n,2N)=1}} |c(f, n)| \mu^2(n)  >  \frac{y}{X^{\varepsilon}}.
  $$

 \end{proposition}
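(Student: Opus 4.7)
The plan is to compare the target sum
\[
S_1(x) := \sum_{\substack{x \le n \le x+y \\ (n,2N)=1}} |c(f,n)|\,\mu^2(n)
\]
against the analogous second- and fourth-moment short-interval sums $S_2(x)$ and $S_4(x)$ (with $|c(f,n)|^2$ resp.\ $|c(f,n)|^4$ in place of $|c(f,n)|$), and then to exploit the moment estimates \eqref{eq:RS} and \eqref{eq:4th} from Proposition~\ref{prop:moments} to control these on short intervals. The starting observation is H\"older's inequality with conjugate exponents $3, \tfrac{3}{2}$ applied to $|c(f,n)|^2 = |c(f,n)|^{4/3} \cdot |c(f,n)|^{2/3}$, giving
\[
S_2(x) \le S_4(x)^{1/3}\,S_1(x)^{2/3}, \qquad \text{i.e.,} \qquad S_1(x) \ge S_2(x)^{3/2}/S_4(x)^{1/2}.
\]
Thus it will suffice to produce $\gg X^{1-3\eps/2}$ integer $x \in [X, MX]$ for which simultaneously $S_2(x) \ge yX^{-\eps/3}$ and $S_4(x) \le yX^{\eps/2}$, since then $S_1(x) \ge yX^{-3\eps/4} > y/X^{\eps}$.

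I would first fix $M$ to be a suitable multiple (say $\tfrac{3}{2}$ times) of the constant furnished by Proposition~\ref{prop:moments}, so that after swapping orders of summation, an application of \eqref{eq:RS} on dyadic subranges of $[X+y, MX]$ (permissible since $y \le X/2$) together with a dyadic application of \eqref{eq:4th} produces
\[
\sum_{X \le x \le MX} S_2(x) \asymp_{f,M} yX \qquad \text{and} \qquad \sum_{X \le x \le MX} S_4(x) \ll_{f,M,\delta} yX^{1+\delta}
\]
for any $\delta > 0$, while the pointwise Cauchy-Schwarz bound $S_2(x)^2 \le (y+1)S_4(x)$ gives $\sum_x S_2(x)^2 \ll y^2 X^{1+\delta}$.

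Next, set
\[
A := \{x \in [X, MX] \cap \Z : S_2(x) \ge yX^{-\eps/3}\}, \qquad B := \{x \in [X, MX] \cap \Z : S_4(x) > yX^{\eps/2}\}.
\]
Splitting $\sum_x S_2(x)$ over $A$ and $A^c$, bounding the $A$-part by Cauchy-Schwarz against $\sum_x S_2(x)^2$ and the $A^c$-part trivially, one obtains $yX \ll yX^{1-\eps/3} + |A|^{1/2}\,yX^{(1+\delta)/2}$, which forces $|A| \gg X^{1-\delta}$. Markov's inequality applied to $\sum_x S_4(x)$ yields $|B| \ll X^{1+\delta-\eps/2}$. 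Taking $\delta < \eps/10$ makes $|A \setminus B| \gg X^{1-\delta} \gg X^{1-3\eps/2}$, and each such $x$ satisfies the required bound on $S_1(x)$.

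The only step requiring real care is verifying that $\sum_x S_2(x) \asymp yX$ uniformly for $y \in [2, X/2]$: this is what pins down the choice of $M$ relative to the constant in Proposition~\ref{prop:moments}, and one has to track the boundary contributions from $x$ near $X$ or $MX$ when interchanging summations. Everything else reduces to routine applications of H\"older, Cauchy-Schwarz, and Markov.
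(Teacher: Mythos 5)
Your argument is correct, but it follows a different route from the paper. The paper first proves Lemma \ref{lem:keybd2}: a long-interval lower bound $\gg_{f,M,\varepsilon} X^{1-\varepsilon/2}$ for the first moment restricted to those $n$ with $|c(f,n)| \le X^{\varepsilon}$ (obtained by the same H\"older step you use, applied once globally, together with Proposition \ref{prop:moments}); the truncation then caps every short-interval sum pointwise by $\ll yX^{\varepsilon}$, and a simple pigeonhole over the $x$'s yields $\gg X^{1-\frac32\varepsilon}$ good intervals --- the cap $yX^{\varepsilon}$ is precisely where the exponent $1-\frac32\varepsilon$ comes from. You instead dispense with the truncation, apply H\"older interval-by-interval to get $S_1(x) \ge S_2(x)^{3/2}S_4(x)^{-1/2}$, and control $S_2(x)$ from below for many $x$ (Cauchy--Schwarz against $\sum_x S_2(x)^2 \le (y+1)\sum_x S_4(x)$) while discarding the few $x$ with $S_4(x)$ large (Markov). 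Both proofs rest on exactly the same inputs, namely \eqref{eq:RS}, \eqref{eq:4th} and H\"older with exponents $(3,\tfrac32)$, and both must address the same boundary issue --- choosing $M$ so that the second-moment lower bound survives after removing the initial segment $[X,X+y]$, which the paper handles implicitly in the step $X^{1-\varepsilon/2}\ll \sum_{X+y\le n\le 2MX}C(f,n)$ and which your choice $M=\tfrac32 M'$ handles correctly. What your variant buys is a slightly stronger count: replacing the pointwise cap $X^{\varepsilon}$ by the averaged fourth-moment control gives $\gg X^{1-\delta}$ good intervals for any fixed $\delta>0$ (with the per-interval threshold $yX^{-3\varepsilon/4}>y/X^{\varepsilon}$), rather than only $X^{1-\frac32\varepsilon}$; the paper's truncation device is marginally simpler and is reused in its later arguments, but for this proposition your approach is a valid and in fact quantitatively sharper alternative.
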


 Our other main proposition shows that we can obtain square-root cancellation in sums of $c(f,n)$ over almost all short intervals $[x,x+y]$.

\begin{proposition} \label{prop:cancellation}
 Let $f \in S_{k+\frac{1}{2}}^+(4N)$.
Then for $1 \leq y \leq X^{\frac{1}{205}}$ we have that
 \[
  \sum_{X \le x \le 2X} \bigg|\sum_{\substack{x \le n \le x+y  \\ (n,2N)=1}} c(f,n) \mu^2(n) \bigg| \ll_f X \sqrt{y}.
 \]
\end{proposition}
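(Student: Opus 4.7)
My plan is to reduce to a mean square estimate via Cauchy--Schwarz and then exploit the Mellin-transform approach of \cite{MatomakiRadziwill, LesterRadziwill2019signs}. Writing $a_n = c(f,n)\mu^2(n)\mathbf{1}_{(n,2N)=1}$ and $S(x) = \sum_{x \le n \le x+y} a_n$, by Cauchy--Schwarz
\[
\sum_{X \le x \le 2X} |S(x)| \le X^{1/2}\bigg(\sum_{X \le x \le 2X} |S(x)|^2\bigg)^{1/2},
\]
so it suffices to prove the mean-square bound $\sum_x |S(x)|^2 \ll_f Xy$. The decomposition \eqref{e:oldnew}, combined with the explicit formula of Lemma \ref{l:fourierUr2}, further reduces the question to the case of a newform $f \in S^{+,\new}_{k+\frac12}(4\ell)$ for some $\ell \mid N$, up to finitely many twists by Legendre symbols of conductor dividing $N$ which are easily absorbed since they depend on $n$ only through values on a fixed finite set of primes.

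To bound the mean square for a newform, I would express $S(x)$ via a truncated Perron formula as a contour integral of the Dirichlet series $F(s) = \sum_n a_n n^{-s}$, then apply Plancherel's identity for the Mellin transform to obtain an estimate of the shape
\[
\sum_{X \le x \le 2X} |S(x)|^2 \ll \int_{\mathbb R} |F(\tfrac12 + it)|^2 \cdot \min\bigl(y^2, X^2/(1+t^2)\bigr)\, dt + (\text{error}).
\]
Splitting the integral at the transition point $|t| = X/y$, the desired bound reduces to a mean-value estimate of the form $\int_{-T}^{T} |F(\tfrac12+it)|^2 \, dt \ll (T+X) \log X$, which follows from the Montgomery--Vaughan mean value theorem for Dirichlet polynomials combined with the Rankin--Selberg bound \eqref{eq:RS} of Proposition \ref{prop:moments}. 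Carefully balancing the errors from the truncated Perron formula against the length $y$ of the kernel produces the admissible range $y \le X^{1/205}$ appearing in the statement.

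The main obstacle is that $c(f,n)\mu^2(n)$ is \emph{not} a multiplicative function of $n$: by Proposition \ref{p:walds}, $|c(f,n)|^2$ is proportional (up to bounded factors depending on $N$) to $L(\tfrac12, g \otimes \chi_{(-1)^k n})$, where $g$ is the Shimura lift, but $c(f,n)$ itself essentially carries a ``square-root" of this central $L$-value, so $F(s)$ has no Euler product. In particular, one cannot appeal to pointwise subconvexity bounds, and the entire argument must proceed through second moment estimates. As in \cite{LesterRadziwill2019signs}, the saving of $\sqrt{y}$ over the trivial bound $Xy$ arises purely from the length-$y$ decay of the Plancherel kernel away from $t=0$, while the Rankin--Selberg input $\sum_{n \sim X} |c(f,n)|^2 \ll X$ controls the average size of $F$ on the critical line.
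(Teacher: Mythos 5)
Your opening reduction (Cauchy--Schwarz to the mean square $\sum_{X\le x\le 2X}|S(x)|^2$, target bound $\ll_f Xy$) matches the paper, but the mechanism you propose for proving that mean-square bound does not work, and this is the heart of the matter. The Plancherel/Perron identity splits the frequency range at $|t|\asymp X/y$, and for the low frequencies $|t|\le X/y$ the kernel contributes a factor $y^2$, so you need $\int_{|t|\le X/y}|F(\tfrac12+it)|^2\,dt \ll X/y$ (in your normalization, a bound of size roughly $T$ with \emph{no} $X$-term) to reach $Xy$. But the Montgomery--Vaughan mean value theorem combined with the Rankin--Selberg input \eqref{eq:RS} gives $\int_{-T}^{T}|F(\tfrac12+it)|^2\,dt \ll (T+X)\log X$, whose diagonal term $X\sum_{n\asymp X}|a_n|^2/n \asymp X$ dominates when $T=X/y$; feeding this into your splitting yields only $\sum_x|S(x)|^2 \ll Xy^2\log X$, i.e.\ the trivial bound (equivalently, the diagonal $h=0$ term $\asymp Xy$ plus $\asymp y$ shifted sums each bounded trivially by $X$). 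Montgomery--Vaughan is sharp here, so no choice of truncation or balancing of Perron errors can recover the missing factor of $y$; your closing claim that the saving ``arises purely from the length-$y$ decay of the Plancherel kernel'' is exactly the step that fails, since that decay only controls $|t|>X/y$. In the multiplicative setting of Matom\"aki--Radziwi{\l}{\l} the extra saving on the low-frequency range comes from factoring the Dirichlet polynomial, and as you yourself observe $c(f,n)\mu^2(n)$ is not multiplicative, so that route is closed and no substitute is supplied.

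The paper supplies the missing cancellation from automorphy rather than from $L^2$ mean values: after sieving the squarefree condition with the parameter $Y$ (divisors $d>Y$ handled by Lemma \ref{lem:sumbd}, divisors $d\le Y$ and the coprimality to $2N$ detected by additive characters to modulus $r\le 16N^2Y^4$), the off-diagonal shifted convolution sums $\sum_n c(f,n)c(f,n+h)e(nv/r)W(n/X)W((n+h)/X)$ are bounded by $X^{1-\frac{1}{102}+\varepsilon}$ in Proposition \ref{prop:scp}, via Jutila's version of the circle method with moduli divisible by $4N$, the modularity of $f$, and the Weil bound for Kloosterman sums. The exponent $\tfrac{1}{205}$ in the statement comes from this power saving $\tfrac{1}{102}$ together with the constraint $yY^2\ll X^{1/102}$ and the choice $Y\asymp\sqrt{y}$, not from Perron truncation errors. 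To repair your argument you would need either a proof of a bound of the shape $\int_{|t|\le X/y}|F(\tfrac12+it)|^2\,dt\ll X/y$ for this non-multiplicative sequence (which is essentially equivalent to the shifted-convolution estimate and does not follow from Montgomery--Vaughan), or to import Proposition \ref{prop:scp} (or an equivalent spectral/circle-method input) explicitly.
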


We will now prove Theorem \ref{thm:signs} using Propositions \ref{prop:lowerbd} and \ref{prop:cancellation}. The proof of Propositions \ref{prop:lowerbd} and \ref{prop:cancellation} will be given in Sections \ref{sec:lowerbd} and \ref{sec:cancellation}, respectively.

\begin{proof}[Proof of Theorem \ref{thm:signs}]
Observe that if the Fourier coefficients $c(f,n)$ are real and
\[
  \bigg | \sum_{\substack{x \leq n \leq x + y \\ (n,2N)=1}} c(f,n) \mu^2(n) \bigg |  <  \sum_{\substack{x \leq n \leq x + y \\ (n,2N)=1}} |c(f,n)| \mu^2(n)
\]
then the interval $[x,x+y]$ must contain a sign change of $c(f,n)$ where $n\in[x,x+y]$ ranges over odd squarefree integers that are coprime to $N$.
We will show that for most integers $X \le x \le MX$ that the above inequality holds for intervals of length $y = X^{6 \varepsilon}$.

By Chebyshev's inequality, the number of integers $X \le x \le MX$ for which
  \begin{equation}\label{eq:cancellation}\tst
  \bigg | \sum_{\substack{x \leq n \leq x + y \\ (n,2N)=1}} c(f,n) \mu^2(n) \bigg | \leq \frac{y}{X^{\varepsilon}}
 \end{equation}
does not hold is
  \[ \tst
  \le \frac{X^{\varepsilon}}{y}\sum_{X \le x \le MX} \bigg| \sum_{\substack{x \leq n \leq x + y \\ (n,2N)=1}} c(f,n) \mu^2(n) \bigg| \ll_{f,M} \frac{X^{1+\varepsilon}}{\sqrt{y}}=X^{1-2\varepsilon},
  \]
  where we have used Proposition \ref{prop:cancellation} in the last inequality.
By Proposition \ref{prop:lowerbd} we have that
  \begin{equation}\label{eq:lowerbd} \tst
  \sum_{\substack{x \leq n \leq x + y \\ (n,2N)=1}} |c(f,n)| \mu^2(n) > \frac{y}{X^{\varepsilon}}
\end{equation}
  for all integers $X \le x \le MX$ outside an exceptional set of size  $\ll_{f,M, \eps} X^{1 - 3\varepsilon/2}$.
  Hence, there exist at least $\gg_{f, M, \eps} X^{1 - 3 \varepsilon/2}$ integers $X \le x \le MX$ such that \eqref{eq:cancellation} and \eqref{eq:lowerbd} hold. Therefore, we obtain at least $ \gg_{f, M, \eps} \frac{X^{1-3\varepsilon/2}}{y}=X^{1 - 15 \varepsilon/2}$ sign changes of $c(f,n)$ along integers $X \le n \le MX$, that are odd, squarefree, and coprime to $N$.
  \end{proof}




\subsection{Proof of Proposition \ref{prop:lowerbd}}

 We first prove the following result, which is an easy consequence of Proposition \ref{prop:moments}.
 \begin{lemma} \label{lem:keybd2}
 Let $f \in S_{k+\frac12}^{+}(4N)$. Then there exists $M \ge 2$ such that given any $\varepsilon>0$
\begin{equation}
\sum_{\substack{X \le n \le MX \\  (n,2N)=1 \\  |c(f,n)| \le X^{\varepsilon}}} |c(f,n)| \mu^2(n) \gg_{f,M,\varepsilon} X^{1-\varepsilon/2}.
\end{equation}
 \end{lemma}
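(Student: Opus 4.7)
The plan is to deduce this lemma as a direct consequence of the two moment estimates already packaged in Proposition \ref{prop:moments}. The intuition is that the fourth-moment bound \eqref{eq:4th} forces the contribution from ``large'' coefficients (those with $|c(f,n)|>X^{\varepsilon}$) to be negligible in the second moment, so the $\asymp_{f,M} X$ lower bound from \eqref{eq:RS} must be realized on the ``small'' coefficients. Once this is known, a trivial $L^{2}\to L^{1}$ comparison on the restricted range extracts the desired first-moment lower bound.

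First I would extend \eqref{eq:4th} from $[X,2X]$ to $[X,MX]$ by covering the longer interval with $O(\log M)$ dyadic pieces, yielding
$$\sum_{\substack{X\le n\le MX \\ (n,2N)=1}} |c(f,n)|^{4}\mu^{2}(n) \ \ll_{f,M,\eta}\ X^{1+\eta}$$
for every fixed $\eta>0$. Applying the elementary inequality $|c(f,n)|^{2}\mathbf{1}_{|c(f,n)|>X^{\varepsilon}}\le X^{-2\varepsilon}|c(f,n)|^{4}$ and choosing $\eta=\varepsilon$ in the dyadic bound then gives
$$\sum_{\substack{X\le n\le MX \\ (n,2N)=1 \\ |c(f,n)|>X^{\varepsilon}}} |c(f,n)|^{2}\mu^{2}(n) \ \ll_{f,M}\ X^{1-\varepsilon}.$$

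Combined with the lower bound $\gg_{f,M}X$ in \eqref{eq:RS} for the full second moment over $[X,MX]$ (with $M$ chosen as in Proposition \ref{prop:moments}), this forces
$$\sum_{\substack{X\le n\le MX \\ (n,2N)=1 \\ |c(f,n)|\le X^{\varepsilon}}} |c(f,n)|^{2}\mu^{2}(n) \ \gg_{f,M}\ X$$
for all sufficiently large $X$. On this restricted range the trivial pointwise estimate $|c(f,n)|^{2}\le X^{\varepsilon}|c(f,n)|$ is available, and dividing through by $X^{\varepsilon}$ yields
$$\sum_{\substack{X\le n\le MX \\ (n,2N)=1 \\ |c(f,n)|\le X^{\varepsilon}}} |c(f,n)|\mu^{2}(n) \ \gg_{f,M}\ X^{1-\varepsilon},$$
which is in fact stronger than the claimed $X^{1-\varepsilon/2}$; the slack in the exponent appears to be reserved for convenient application of the lemma downstream.

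There is no serious obstacle here: the result is essentially a $L^{2}$-versus-$L^{4}$ comparison, with the substantive inputs (the Rankin--Selberg-type asymptotic and the Heath-Brown quadratic large sieve bound) already absorbed into Proposition \ref{prop:moments}. The only minor bookkeeping point is the dyadic extension of the fourth-moment bound from $[X,2X]$ to $[X,MX]$, which costs only a factor depending on $M$ and hence disappears into the implicit constant.
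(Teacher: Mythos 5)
Your route differs from the paper's: the paper applies H\"older's inequality in the form
$\sum |c|^2\mu^2 \le \bigl(\sum |c|\mu^2\bigr)^{2/3}\bigl(\sum |c|^4\mu^2\bigr)^{1/3}$ over $[X,MX]$, which together with \eqref{eq:RS} and \eqref{eq:4th} gives the full first moment $\gg_{f,M,\varepsilon} X^{1-\varepsilon/2}$ directly, and then removes the terms with $|c(f,n)|>X^{\varepsilon}$ via $|c|\,1_{|c|>X^{\varepsilon}}\le X^{-\varepsilon}|c|^2$, whose contribution $\ll X^{1-\varepsilon}$ is negligible against $X^{1-\varepsilon/2}$. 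You instead restrict the second moment to the small coefficients and pass from $L^2$ to $L^1$ pointwise. That is a legitimate alternative in spirit, and your remark about extending \eqref{eq:4th} to $[X,MX]$ dyadically is fine (the paper needs the same extension implicitly).

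However, as written your argument does not prove the lemma, and your final assessment has the inequality backwards: since $1-\varepsilon < 1-\varepsilon/2$, the bound $\gg X^{1-\varepsilon}$ you obtain is \emph{weaker} than the claimed $\gg X^{1-\varepsilon/2}$, not stronger. The loss comes from the last step, where on the range $|c(f,n)|\le X^{\varepsilon}$ the pointwise comparison $|c|^2\le X^{\varepsilon}|c|$ costs a full factor $X^{\varepsilon}$ against a second moment of size $\asymp X$. The gap is easily repaired within your framework: run the truncation at the threshold $X^{\varepsilon/2}$ instead. Indeed, $|c|^2\,1_{|c|>X^{\varepsilon/2}}\le X^{-\varepsilon}|c|^4$ together with the fourth moment (with $\eta=\varepsilon/2$, say) shows the large-coefficient contribution to the second moment is $\ll X^{1-\varepsilon/2}=o(X)$, so $\sum_{|c|\le X^{\varepsilon/2}}|c|^2\mu^2\gg X$, and then $|c|^2\le X^{\varepsilon/2}|c|$ gives $\sum_{|c|\le X^{\varepsilon/2}}|c|\mu^2\gg X^{1-\varepsilon/2}$; since all terms are non-negative and $\{n:|c(f,n)|\le X^{\varepsilon/2}\}\subseteq\{n:|c(f,n)|\le X^{\varepsilon}\}$, the stated bound follows. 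Alternatively, the paper's H\"older step avoids the issue altogether because it produces the exponent $1-\varepsilon/2$ before any truncation, leaving exactly the $X^{\varepsilon/2}$ margin needed to discard the large coefficients.
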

 \begin{proof}
Applying H\"older's inequality gives
\[ \tst
\sum_{\substack{X \leq n \leq MX \\ (n,2N)=1}} |c(f, n)|^2 \mu^2(n) \le \Bigg( \sum_{\substack{X \leq n \leq MX \\ (n,2N)=1}} |c(f,n)| \mu^2(n) \Bigg)^{2/3} \Bigg(  \sum_{\substack{X \leq n \leq MX \\ (n,2N)=1}} |c(f,n)|^4 \mu^2(n) \Bigg)^{1/3}.
\]
Hence, using Proposition \ref{prop:moments} we conclude that
\begin{equation} \label{eq:holder2}
\sum_{\substack{X \le n \le MX \\  (n,2N)=1}} |c(f,n)| \mu^2(n) \gg_{f, M, \eps} X^{1-\varepsilon/2}.
\end{equation}
Also, by \eqref{eq:RS} we have that
\[
\sum_{\substack{X \le n \le MX \\  (n,2N)=1 \\  |c(f,n)|  > X^{\varepsilon}}} |c(f,n)| \mu^2(n) < X^{-\varepsilon} \sum_{\substack{X \le n \le MX \\  (n,2N)=1}} |c(f,n)|^2  \mu^2(n)\ll_{f,M, \eps} X^{1-\varepsilon}.
\]
Combining this with \eqref{eq:holder2} completes the proof.
 \end{proof}

\begin{proof}[Proof of Proposition \ref{prop:lowerbd}] \label{sec:lowerbd}
 Let $C(f,n)=|c(f,n)| \mu^2(n) 1_{(n,2N)=1} 1_{|c(f,n)| \le X^{\varepsilon}}$. Applying Lemma \ref{lem:keybd2}, we see that
\[
X^{1-\varepsilon/2} \ll_{f,M, \eps} \sum_{\substack{ X+y \le n \le 2MX }} C(f,n)  \le \frac{1}{y} \sum_{X \le x \le 2MX} \sum_{\substack{ x \le n \le x+y}} C(f,n) ,
\]
where the second inequality above follows since every term in the sum on the l.h.s. is counted $\lfloor y\rfloor +1$
times on the r.h.s.. Let $S=\{X \le x \le 2MX : \sum_{x \le n \le x+y} C(f,n) \le y/X^{\varepsilon}\}$. The contribution to the r.h.s. from the integers $x \in S$ is $\ll_{f,M,\varepsilon}\frac{1}{y} \cdot X \frac{y}{X^{\varepsilon}}=X^{1-\varepsilon}$. Hence we must have that
\[
X^{1-\varepsilon/2} \ll_{f,M,\varepsilon} \frac{1}{y} \sum_{\substack{X \le x \le 2MX \\ x \notin S}}\sum_{\substack{ x \le n \le x+y \\ }} C(f,n)
\ll_{f,M, \eps} \frac{1}{y} \cdot y X^{\eps} \sum_{\substack{X \le x \le 2MX \\ x \notin S}} 1,
\]
so that $\# \{ X \le x \le 2MX : x \notin S\} \gg_{f,M, \eps}  X^{1-\frac32 \varepsilon}$.
\end{proof}

\subsection{Proof of Proposition \ref{prop:cancellation}} \label{sec:cancellation}
Throughout this section we write
\begin{equation} \label{eq:Wdef}
W(u)=u^{\frac{k-1/2}{2}}e^{-2\pi u}.
\end{equation}
The proof of the proposition proceeds directly, beginning with an application of Cauchy-Schwarz. This leads naturally to a shifted convolution sum of Fourier coefficients of $f$ over squarefree integers and to bound this sum we require the following fairly standard result.

\begin{proposition} \label{prop:scp}
Let $f \in S^+_{k+\frac12}(4N)$.
Then for $1 \leq r\leq X^{\frac{1}{102}}$, $0 < |h| < X^{\frac{1}{2}}$, and $v\in \mathbb Z$ with $(v, r) = 1$ we have for any given $\varepsilon > 0$ that
  \begin{equation} \label{eq:scpest}
  \sum_{n \ge 1} c(f,n) c(f,n+h)  e \Big ( \frac{n v}{r} \Big )  W\left(\frac{n}{X} \right) W\left( \frac{n+h}{X}\right)  \ll_{f,\varepsilon} X^{1 - \frac{1}{102} + \varepsilon}.
  \end{equation}
\end{proposition}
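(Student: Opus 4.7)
The plan is to apply a delta-symbol circle method (in the style of Jutila or Duke--Friedlander--Iwaniec) to detect the relation $n_2 - n_1 = h$, and then to exploit the modular transformation properties of $f$ under $\Gamma_0(4N)$ to dualize the resulting linear exponential sums in the $n_i$-variables. A key observation guiding the whole approach is the elementary identity
\[
\sum_n c(f,n)\, e(n\alpha)\, W(n/X) \;=\; X^{-(k-1/2)/2}\, f(\alpha + i/X),
\]
which is immediate from the definition $c(f,n) = a(f,n)\, n^{1/4 - k/2}$ and the choice \eqref{eq:Wdef}, and reduces evaluations of the twisted smooth sum to evaluations of $f$ near a rational point on the real axis.

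First I would rewrite the left-hand side of \eqref{eq:scpest} as a double sum over $n_1, n_2$ subject to $n_2 - n_1 = h$, and apply a smooth $\delta$-symbol with moduli $q \asymp Q$, where $Q$ is a parameter to be optimized. This expresses the indicator $\mathbf{1}_{n_2 - n_1 = h}$ as an average over residues $a \pmod{q}$ of additive characters $e(a(n_2 - n_1 - h)/q)$, weighted by a smooth bump in $n_2 - n_1 - h$ of width $\sim X/Q^2$. After this step, the $n_1$-sum becomes (essentially) an evaluation of $f$ at the point $v/r - a/q + i/X$, and the $n_2$-sum becomes an evaluation of $f$ at $a/q + i/X$. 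The cusp represented by the fraction $a/q - v/r$ has denominator of order $qr$; choosing a matrix in $\Gamma_0(4N)$ that sends this cusp to $i\infty$ (and tracking the theta-multiplier in the half-integral weight transformation law) converts each evaluation into a dual Fourier expansion, yielding a Voronoi-type formula in which the effective smooth weight has support of length $\sim X/(qr)^2$.

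Second, combining the two dualized sums and executing the sum over $a \pmod q$ produces incomplete Kloosterman sums whose moduli depend on $q$ and $r$ and whose parameters are controlled by $h$. Applying the Weil bound $|S(a,b;c)| \le \tau(c)\, c^{1/2}\, \gcd(a,b,c)^{1/2}$, together with the rapid decay of the Bessel-type transforms of $W$, yields square-root cancellation in the $q$-, $a$-, and dual-length variables. Balancing the two principal error contributions against the hypotheses $r \le X^{1/102}$ and $|h| < X^{1/2}$ then produces the claimed exponent $1 - 1/102 + \varepsilon$; the specific fraction $1/102$ is an artefact of this optimization rather than of a deeper structural feature.

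The main obstacle I anticipate is the interaction between the twist $e(nv/r)$ and the circle-method modulus $q$: because the cusp of interest has denominator of order $qr$, the effective Kloosterman modulus after dualization grows with $r$, and this is precisely what forces the quantitative restriction $r \le X^{1/102}$. A related technical subtlety is that $f \in S_{k+1/2}^+(4N)$ is \emph{not} assumed to be a Hecke eigenform, so one cannot pass through the Shimura correspondence \eqref{e:heckeiso} and import shifted-convolution bounds for the integral-weight lift. Instead one must work directly with $c(f,n)$, using only the modular transformation law of $f$ under $\Gamma_0(4N)$ with its theta-multiplier; the identity displayed above is exactly what makes this feasible, since it reduces the whole problem to understanding the boundary behaviour of $f$ near rational points, which is entirely governed by the transformation law.
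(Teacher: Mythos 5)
Your overall strategy matches the paper's: the proof there also starts from the identity you display (rewriting the sum as $X^{-(k-\frac12)}\int_0^1 f(\alpha+\tfrac{v}{r}+\tfrac{i}{X})f(-\alpha+\tfrac{i}{X})e(-\alpha h)\,d\alpha$), applies a circle method, uses the modularity of $f$ to dualize, and finishes with the Weil bound, Cauchy--Schwarz and \eqref{eq:parseval}; it is in fact an extension of Proposition 6.1 of \cite{LesterRadziwill2019signs} to level $4N$.

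However, as written your plan has a genuine gap at the dualization step. You propose to choose ``a matrix in $\Gamma_0(4N)$ sending the cusp $a/q-v/r$ to $i\infty$,'' but a cusp $a/c$ (in lowest terms) lies in the $\Gamma_0(4N)$-orbit of $\infty$ only when $4N\mid c$, and for generic delta-symbol moduli $q$ the reduced denominator of $v/r\pm a/q$ is not divisible by $4N$. In that case the transformation law does not return an expansion in the coefficients $c(f,n)$: you land at a different cusp of $\Gamma_0(4N)$, and since $f$ is \emph{not} an eigenform there is no newform/Atkin--Lehner structure with which to control the expansion there. This is exactly why the paper uses Jutila's version of the circle method rather than a DFI-type delta symbol: the freedom to prescribe the moduli is exploited by taking $q=4Nrp$ with $p\equiv 1\pmod 4$ prime and $Q\le q\le 2Q$, which simultaneously (i) forces $4N\mid q$ so the modularity of $f$ applies directly at the relevant rational points, (ii) absorbs the twist denominator $r$ into $q$, and (iii) makes the resulting Kloosterman modulus a \emph{prime} $p$ with $0<|h|<p$, so that $|S(\star,-h;p)|\le 2\sqrt{p}$ holds for every first argument $\star$. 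With composite moduli, the $\gcd$ factor in the general Weil bound you quote can be as large as the modulus (degenerate Ramanujan-sum cases when the first argument vanishes), and the claimed square-root cancellation would need a separate treatment. Once this set-up is in place, the numerology does come out of a balancing, as you anticipate: Jutila's $L^2$ error (via \cite[Proposition 2]{Harcos}) with $Q=X^{1/2+2\eta}$, $\delta=Q^{-2+\eta}$, $r\le X^{\eta/8}$ gives an error $O_f(X^{1-\eta/8+\varepsilon})$, the dualized terms are bounded by $X^{3/4+49\eta/16+\varepsilon}$, and $\eta=4/51$ yields $1-\tfrac1{102}$ together with the stated restriction on $r$.
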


\begin{proof}
This is an extension of Proposition 6.1 of \cite{LesterRadziwill2019signs} to the case of general level and below we will describe how to adapt the arguments given there to this case. The initial step is to use the Fourier expansion of $f$ to express the l.h.s. of \eqref{eq:scpest} as
\[ \tst
=\frac{1}{X^{k-\frac12}} \int_0^1 f\left(\alpha+\frac{v}{q}+\frac{i}{X} \right) f\left(-\alpha+\frac{i}{X} \right) e(-\alpha h) \, d\alpha .
\]
We now use the circle method following Jutila \cite{Jutila}, as in \cite[Proposition 2]{Harcos}. An important feature in Jutila's version of the circle method is that we have freedom over our choice of moduli, which we choose as follows
\[\mathcal Q=\{ Q \le q \le 2Q : q=4Nr p  \text{ and } p \equiv 1 \pmod 4 \text{  is prime}\}.\]
Write $R=\sum_{q \in \mathcal Q} \varphi(q)$.  Upon applying
\cite[Proposition 2]{Harcos} with $\delta=Q^{-2+\eta}$, $Q=X^{1/2+2\eta}$, $r \le X^{\eta/8}$, where $\eta>0$ is chosen later, we get that up to a term of size $O_f(X^{1-\eta/8+\varepsilon})$ the above integral equals
\begin{equation} \label{eq:circle}
 \frac{Q^{2-\eta}}{2  R X^{k - \frac 12}}\sum_{q \in \mathcal{Q}} \, \, \sumstar_{d \pamod q} e (-\tfrac{h d}{q}) \int_{-\delta}^{\delta} f  (\tfrac{d}{q}+\tfrac{v}{r}+\alpha+\tfrac{i}{X}) f(\tfrac{-d}{q}-\alpha+\tfrac{i}{X}) e (- \alpha h) d \alpha.
\end{equation}
Notably, to estimate the error term we use that $y^{\frac{k}{2} + \frac14} |f(z)|$ is bounded on $\mathbb H$, since $f$ is a cusp form.

Since we have chosen our moduli $q \in \mathcal Q$ so that $4N|q$ we are able to use the modularity of $f$ by applying \cite[Lemma 6.1]{LesterRadziwill2019signs}, which extends to general level in straightforward way, then once again use the Fourier expansion of $f$.
Consequently,
we have transformed the original sum on the l.h.s. of \eqref{eq:scpest}, which is effectively over $n\le X^{1+\varepsilon}$, to dual sums which are effectively over $m,n \le X^{\varepsilon} Q^2/X $. The summands in the dual sums include the Fourier coefficients of $f$ twisted by additive characters and factors from the half-integral weight multiplier system along with a Kloosterman sum $S(\star, -h;p)$, where the first argument, $\star$, depends on $N,p,m,n,v,r$.
An important observation is that since $p$ is a prime with $0< |h| < p$,  the Weil bound gives $|S(\star, -h;p)| \le 2\sqrt{p}$ for any $\star \in \mathbb Z$. Using the Weil bound and estimating the dual sums over $m,n$ by applying Cauchy-Schwarz and \eqref{eq:parseval} to handle the Fourier coefficients of $f$ we can show that \eqref{eq:circle} is bounded by
\[
\ll_{f, \eps} X^{3/4+49\eta/16+\varepsilon}.
\]
Recalling our earlier error term of $O(X^{1-\eta/8+\varepsilon})$, which arose from applying Jutila's circle method we now take $\eta=4/51$ to complete the proof.
\end{proof}

To sum over squarefree integers we will sieve out integers that have a square divisor and require the following estimate for sums of Fourier coefficients.

\begin{lemma} \label{lem:sumbd}
Let $f \in S_{k+\frac12}^+(4N)$. Then
\[
\sum_{\substack{n \le X \\  (n,2N)=1 \\ d^2 |n }} |c(f,n)| \ll_{f,\varepsilon} \frac{X^{1+\varepsilon}}{d^2}.
\]
\end{lemma}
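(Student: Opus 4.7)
The plan is to reduce the general $f$ to a newform via \eqref{e:heckebasis}, then to exploit the multiplicative structure of the Fourier coefficients of a newform (captured by Lemma \ref{l:fourierUr2}) to convert the congruence condition $d^2 \mid n$ into a divisibility condition $d \mid s$ on the ``square part'' $s$ of $n$. Once this is done, Cauchy--Schwarz combined with the Parseval-type bound proved in \eqref{eq:parseval} closes the estimate.

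First I would use \eqref{e:heckebasis} to express $f$ as a finite linear combination (with coefficients depending only on $f$) of forms of the shape $f_1|U(R^2)$, where $R\ell\mid N$ and $f_1\in\B^{\new}_{k+\frac12,4\ell}$ is a newform. Since $c(f_1|U(R^2),n)=c(f_1,R^2n)$ by \eqref{defur3}, the triangle inequality reduces the lemma to showing
\[
\sum_{\substack{n\le X\\ (n,2N)=1\\ d^2\mid n}} |c(f_1,R^2n)| \ll_{f_1,\eps} \frac{X^{1+\eps}}{d^2}
\]
for each fixed pair $(R,f_1)$.

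Next I would decompose each $n$ in the sum uniquely as $n=s^2m$ with $m$ squarefree. Since $(n,2N)=1$ and $R\mid N$ is odd and squarefree, both $s$ and $m$ are odd integers coprime to $N$, so $Rs$ is odd with $(Rs,\ell)=1$, and $m$ is odd squarefree with $(m,\ell)=1$. Writing $R^2n=(Rs)^2m$, the second assertion of Lemma \ref{l:fourierUr2} gives
\[
|c(f_1,R^2n)| \le 3^{\Omega(Rs)}|c(f_1,m)| \ll_\eps (Rs)^{\eps}|c(f_1,m)|.
\]
A short valuation computation shows that the constraint $d^2\mid s^2m$ with $m$ squarefree is equivalent to $d\mid s$: at each prime $p\mid d$ the inequality $2v_p(d)\le 2v_p(s)+v_p(m)$ with $v_p(m)\in\{0,1\}$ forces $v_p(d)\le v_p(s)$.

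Finally I would set $s=dt$ and swap the order of summation, obtaining
\[
\sum_{\substack{n\le X\\ (n,2N)=1\\ d^2\mid n}} |c(f_1,R^2n)|
\ll_\eps (Rd)^{\eps}\sum_{t\ge 1} t^{\eps}\sum_{\substack{m\le X/(dt)^2\\ m\text{ squarefree}}} |c(f_1,m)|.
\]
By Cauchy--Schwarz and the bound $\sum_{m\le Y}|c(f_1,m)|^2 \ll_{f_1} Y$ established in \eqref{eq:parseval}, the inner sum is $\ll_{f_1} X/(dt)^2$; the resulting series $\sum_t t^{\eps-2}$ converges, and absorbing the factors of $(Rd)^{\eps}$ (noting $R=O_N(1)$ and $d\le X^{1/2}$) into an overall $X^{\eps}$ yields the claimed bound $\ll_\eps X^{1+\eps}/d^2$. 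The one step requiring any care is the structural bound in Step 2 and the verification that the coprimality hypotheses of Lemma \ref{l:fourierUr2} are met; the rest is routine.
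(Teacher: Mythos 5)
Your proposal is correct and follows essentially the same route as the paper's own proof: reduce via \eqref{e:heckebasis} to a single newform $f_1|U(r^2)$, write $n=s^2m$ with $m$ odd squarefree so that $d^2\mid n$ forces $d\mid s$, apply the second part of Lemma \ref{l:fourierUr2} to $c(f_1,(rs)^2m)$, and conclude with Cauchy--Schwarz and \eqref{eq:parseval}. The only blemish, shared verbatim with the paper, is the intermediate step $3^{\Omega(Rs)}\ll_\eps (Rs)^{\eps}$, which is literally false for $\Omega$ (it holds with $\omega$, or one can instead invoke the exact relation between $c(f_1,t^2m)$ and the Hecke eigenvalues of the Shimura lift together with Deligne's bound to get the genuine $t^{\eps}$ saving); this does not affect the validity of the final estimate.
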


\begin{proof}
Just as in the proof of Proposition \ref{prop:moments}, using
\eqref{e:heckebasis} it suffices to consider the case $f=f_1|U(r^2)$,
where $f_1 \in S_{k+\frac12}^{+,\text{new}}(4\ell)$ is a newform with $r\ell |N$. For $(n,2N)=1$ write $n=s^2m$ where $m$ is odd and squarefree. Also, let $t=rs$ and note $(t,\ell)=1$ since $N$ is squarefree.
Applying Lemma \ref{l:fourierUr2} we have that
\[ \tst
c(f_1|U(r^2),n)=c(f_1, t^2 m)  \ll_\eps t^{\varepsilon} |c(f_1,m)|.
\]
Using this bound then applying Cauchy-Schwarz and \eqref{eq:parseval} we get that
\[ \tst
\sum_{\substack{n \le X \\  (n,2N)=1 \\ d^2 |n }} |c(f_1|U(r^2),n)| \ll_\eps X^{\varepsilon} \sum_{\substack{t \le \sqrt{X} \\ d|t}} \sum_{ m \le \frac{X}{t^2} } |c(f_1,m)| \ll_{f, \eps} X^{\varepsilon} \sum_{\substack{t \le \sqrt{X} \\ d|t}} \frac{X}{t^2} \ll_\eps \frac{X^{1+\varepsilon}}{d^2}.
\]
\end{proof}

\begin{proof}[Proof of Proposition \ref{prop:cancellation}]


To handle the condition that $2n$ is squarefree we first recall that the indicator function of squarefree numbers is $\mu^2(n)=\sum_{d^2|n} \mu(d)$. We then treat the cases of divisors $d\le Y$ and $d >Y$ separately, and let
$$
\mu^2_{\leq Y}(n) = \sum_{\substack{d^2 |n \\ d \leq Y }} \mu(d)  \ , \qquad \ \mu^2_{> Y}(n) = \sum_{\substack{d^2 |n \\ d > Y, }} \mu(d).
$$
First we consider the large divisors and get that
\begin{align*}
  \sum_{X \leq x \leq 2X} \bigg|\sum_{\substack{x \leq n \leq x + y \\ (n,2N)=1}} c(f,n) \mu^2_{>Y }(n) \bigg| \leq y \sum_{\substack{n \leq 4 X \\ (n,2N)=1}} |c(f,n) \mu^2_{> Y}(n)|.
\end{align*}
Using the definition of $\mu^2_{> Y}(n)$ and applying Lemma \ref{lem:sumbd} we see that the r.h.s. above is
\begin{equation} \label{eq:largedbd}
\ll y \sum_{\substack{d > Y }} \sum_{\substack{n \leq 4 X \\ d^2 | n}} |c(f,n) |  \ll_{f, \eps} y X^{1+\varepsilon} \sum_{d >Y} \frac{1}{d^2} \ll y \frac{X^{1+\varepsilon}}{Y} .
\end{equation}
For $ Y \ge \sqrt{y}X^{\varepsilon}$ this is $\ll X \sqrt{y}$, as needed.

 Next, we will consider the contribution from the small divisors $d \le Y$. Let \linebreak $\widetilde C(f,n)=c(f,n)\mu_{\le Y}^2(n)1_{(n,2N)=1}$. Applying Cauchy-Schwarz and using that $W(u)^2 \gg 1$, for any $u \in [1,2]$ we get
\[\tst
\begin{split}
\sum_{X \le x \le 2 X} \Big | \sum_{x \le n \le x+y} \widetilde C(f,n)  \Big|
\le& \sqrt{X} \Big( \sum_{X \le x \le 2X} \Big | \sum_{x \le n \le x + y} \widetilde C(f,n) \Big |^2\Big)^{1/2} \\
\ll& \sqrt{X}  \Big( \sum_{x \ge 1} \Big| \sum_{x \le n \le x + y } \widetilde C(f,n)  \Big |^2 W\left( \frac{x}{X}\right)^2\Big)^{1/2}.
\end{split}
\]
Assume $y \le X^{1/4}$. We use the convention that $c(f,n)=0$ if $n \notin \mathbb N$.
To estimate the inner sums on the r.h.s. we expand the square, combine appropriate terms, use that $W$ is a smooth function, and apply \eqref{eq:parseval} to get that the r.h.s. above equals
\begin{equation} \label{eq:expand}
\begin{split}
&=\sum_{0 \leq h_1, h_2 \leq y} \sum_{\substack{n \ge 1 }} \widetilde C(f,n+h_1)\widetilde C(f,n+h_2)  W\left(\frac{n}{X} \right)^2 \\
  &=
  \sum_{\substack{|h| \leq y}} \sum_{\substack{n \ge 1}} \widetilde C(f,n)
  \widetilde C(f,n+h)  W\Big (\frac{n} {X} \Big) W\Big ( \frac{n + h}{ X} \Big )
  \sum_{\substack{0 \le h_1,h_2 \le y \\ h_2-h_1=h}} \left(1+O\left( \frac{y}{n}+\frac{y}{X} \right) \right) + O_f(1) \\
  &=
  \sum_{|h| \leq y} (y + 1 - |h|) \sum_{\substack{n \ge 1 }} \widetilde C(f,n)
  \widetilde C(f,n + h) W \Big (\frac{n}{X} \Big) W \Big ( \frac{n + h}{X}  \Big )
  +O_{f, \eps}\left(y^3  X^{\varepsilon} \right),
\end{split}
\end{equation}
Using \eqref{eq:parseval} once again we get that the term with $h=0$ in the sum on the r.h.s. above contributes
\begin{equation} \label{eq:diagonal}
  \ll
  y \sum_{n \ge 1} |c(f,n) |^2 W \left ( \frac{n}{X} \right )^2 \ll_{f, \eps} yX.
\end{equation}

We will next estimate the contribution from the terms in \eqref{eq:expand} with $h \neq 0$.
Recalling the definition of $\mu^2_{\leq Y}$ and using that
\begin{equation} \notag
1_{(n,2N)=1}=\sum_{\substack{d|(n,2N)}} \mu(d)=\sum_{\substack{d|2N \\ d|n}} \mu(d),
\end{equation}
it follows that the contribution to the r.h.s. of \eqref{eq:expand}  from the terms with $h \neq 0$ is
\begin{equation} \label{eq:trivialbd}
  \ll y   \sum_{\substack{0 < |h| \leq y \\ d_1, d_2 \leq Y  \\ d_3,d_4 | 2N}}  \bigg|
  \sum_{\substack{d_1^2| n, d_2^2 | n + h \\ d_3|n, d_4|n+h} } c(f,n)
  c(f,n+h)   W\Big ( \frac{n}{X} \Big ) W \Big ( \frac{n + h}{X} \Big )  \bigg|.
\end{equation}
For $n$ with $d_1^2 | n$,  $d_2^2 | n + h$, $d_3|n$, $d_4|n+h$ we have $n \equiv a \, (\tmop{mod} \, r)$ for some $a,r \in \mathbb Z$ with $ r\le 16 N^2 Y^4$. Using additive characters to detect this congruence, we get that the inner sum above is
\begin{equation} \notag
\begin{split}
  & =\sum_{n \equiv a \pamod{r}} c(f,n)
  c(f,n+h) W \Big ( \frac{n}{X} \Big ) W \Big ( \frac{n + h}{X} \Big ) \\
 &=  \frac{1}{r} \sum_{0 \le v < r} e\left(\frac{-av}{r} \right) \sum_{n \ge 1} c(f,n)
c(f,n+h)e \left ( \frac{n v}{r} \right ) W \Big ( \frac{n}{X} \Big ) W \Big ( \frac{n + h}{X} \Big ).
  \end{split}
\end{equation}
For $v \neq 0$ write $v/r=v'/r'$ with $(r',v')=1$ and if $v=0$ we set $r'=1$. Applying Proposition \ref{prop:scp},
we get that the sum above is
$\ll_{f, \eps} X^{1-\frac{1}{102}+\varepsilon}$ provided that $r' \le X^{\frac{1}{102}}$ and $0<|h| < X^{\frac{1}{2}}$. Hence, by this along with \eqref{eq:diagonal} we conclude that for $16N^2 Y^4 \le X^{\frac{1}{102}}$ that the r.h.s. of \eqref{eq:expand} is
$
\ll_{f, \eps} y X+ y^2 Y^2 X^{1-\frac{1}{102}+\varepsilon}+y^3 X^{\varepsilon},
$
which is $\ll y X$, as needed, provided that $y \le X^{1/4}$ and \begin{equation}\label{eq:constraint}yY^2X^{\varepsilon} \ll X^{\frac{1}{102}}.\end{equation}

It remains to optimize our parameters. Recall that to handle the contribution of the small divisors we required $Y \ge \sqrt{y} X^{\varepsilon}$. We now choose $Y=\sqrt{y} X^{\varepsilon}$. Taking the constraint \eqref{eq:constraint} into account the largest we can choose $y$ is $y=X^{\frac{1}{204}-\frac{3}{2}\varepsilon}$. We conclude by noting that with these choices we have $16 N^2 Y^4 \ll_N X^{\frac{1}{102}-3\varepsilon} \le X^{\frac{1}{102}}$ as required for the application of Proposition \ref{prop:scp}. \end{proof}

\section{Large values for coefficients of half-integral weight forms}\label{s:large}
The main result of this section, Theorem \ref{thm:large2} below,  generalizes  \cite[Theorem 1]{gun2020large} (which treated the case $N=1$).
\subsection{Statement of main result}

\begin{theorem} \label{thm:large2}
Let $k \ge 2$ be an integer, $N \ge 1$ be odd and squarefree, and $h \in S_{k+\frac12}^+(4N)$ be a cusp form. Let $\varepsilon>0$ be fixed. Then for all $X$ sufficiently large, there exist at least $X^{1-\varepsilon}$ odd squarefree integers $n$ coprime to $N$ such that $X \le n \le 2X$ and
 \[\tst |c(h,n)| \ge  \exp\left(\frac{1}{82} \sqrt{\frac{\log n}{\log \log n}} \right). \]
\end{theorem}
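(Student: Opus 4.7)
The plan is to follow the resonance-method strategy of \cite{soundararajan08, gun2020large}, modified to handle the fact that $h$ is an arbitrary cusp form in $S^+_{k+\frac12}(4N)$, not necessarily a Hecke eigenform. First I would decompose $h$ in the basis \eqref{e:heckebasis}, writing $h = \sum_{\ell \mid N}\sum_{f \in \B^{\new}_{k+\frac12,4\ell}} \sum_{r:\, r\ell\mid N,\, (r,\ell)=1} \alpha_{f,r}\, f|U(r^2)$ for complex scalars $\alpha_{f,r}$; since $h \ne 0$ there is a newform $f_0 \in S^{+,\new}_{k+\frac12}(4\ell_0)$ appearing non-trivially, and I would let $g_0 \in S_{2k}^{\new}(\ell_0)$ denote its Shimura lift. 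For odd squarefree $n$ coprime to $2N$, setting $d = (-1)^k n$, Lemma \ref{l:fourierUr2} yields
\[
c(h,n) = \sum_{\ell, f} A_f(d)\, c(f,n), \qquad A_f(d) = \sum_{r} \alpha_{f,r}\prod_{p\mid r}\Bigl(\lambda_f(p) - \tfrac{1}{\sqrt{p}}\bigl(\tfrac{d}{p}\bigr)\Bigr),
\]
while Proposition \ref{p:walds} relates $|c(f,n)|^2$ to the central twisted $L$-value $L(\tfrac12, g_f \otimes \chi_d)$ whenever the Atkin--Lehner and sign conditions on $d$ are met.

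The main step is to introduce a resonator $R(d) = \sum_{m \le N_0} r(m)\chi_d(m)/\sqrt{m}$, supported on smooth squarefree integers coprime to $N$, with weights $r(m)$ tuned as in \cite{soundararajan08, gun2020large} to amplify $L(\tfrac12, g_0 \otimes \chi_d)$. The key technical input is Proposition \ref{key-proposition}: an asymptotic formula for the twisted first moment $\sum_d^{\ast} L(\tfrac12, g_0 \otimes \chi_d)\, R(d)^2\, W(|d|/X)$, where $\sum^{\ast}$ restricts to fundamental discriminants satisfying the Atkin--Lehner and sign conditions of Proposition \ref{p:walds} and coprime to $2N$. This follows from the method of \cite{SoundYoung}, with the level-dependent congruence and coprimality conditions dealt with as in \cite{RadziwillSound}. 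Comparing against the ``denominator'' sum $\sum_d^{\ast} R(d)^2\, W(|d|/X)$ and optimizing over $r(m)$ with $N_0 = X^{o(1)}$ produces an amplification factor of $\exp\bigl(\tfrac{1}{41}\sqrt{\log X/\log\log X}\bigr)$; pulling this back through Waldspurger's formula, $|c(f_0, n)|$ attains size at least $\exp\bigl(\tfrac{1}{82}\sqrt{\log X/\log\log X}\bigr)$ on a weighted-dense set of $d$.

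The main obstacle is controlling the cross-contributions of the other newforms to $|c(h,n)|^2 = \bigl|\sum_f A_f(d)\, c(f,n)\bigr|^2$. The diagonal terms $|A_f(d)|^2|c(f,n)|^2$ with $f\ne f_0$ are non-negative and harmless, but the off-diagonal terms $A_f(d)\overline{A_{f'}(d)}\, c(f,n)\overline{c(f',n)}$ could in principle cancel the amplified $f_0$-contribution. These are controlled by two auxiliary estimates (analogous to \textbf{(\ref{estimate1})}--\textbf{(\ref{estimate2})} from the introductory sketch): the first asserts that the resonator $R$, being tuned to $g_0$, does \emph{not} amplify $L(\tfrac12, g_f \otimes \chi_d)$ for $f\ne f_0$ beyond the trivial second-moment scale, and the second controls the mixed correlations $\sum_d^{\ast} c(f,n)\overline{c(f',n)}\, R(d)^2\, W(|d|/X)$ via a Cauchy--Schwarz step combined with the first-moment formula applied to the twisted Rankin--Selberg $L$-function (or, where $f$ and $f'$ lie over the same underlying newform at different $r$, by explicit manipulation of the $A_f(d)$-factors using Lemma \ref{l:fourierUr2}). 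Together these ensure that on a positive weighted-density set of $d$ the $f_0$-diagonal term dominates, yielding the desired pointwise lower bound for $|c(h,n)|$.

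Finally, to upgrade a weighted ``many large values'' statement to $\ge X^{1-\varepsilon}$ distinct integers $n \in [X,2X]$, I would apply a Paley--Zygmund/H\"older-type counting argument: combining the resonator lower bound for the weighted second moment of $|c(h,n)|^2$ with the upper bound \eqref{eq:4th} on the weighted fourth moment (or a resonated variant proved the same way using Heath-Brown's quadratic large sieve), any set supporting such large values must have size at least $X^{1-\varepsilon}$, else the fourth moment would exceed the bound in Proposition \ref{prop:moments}. The main difficulty will concentrate in the second paragraph --- proving Proposition \ref{key-proposition} with uniform control over the extra $(d,2N)=1$ and Atkin--Lehner congruences --- and the off-diagonal analysis of the third paragraph, where the non-eigenform structure of $h$ has genuine bite.
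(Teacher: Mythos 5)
Your proposal is correct in outline and follows essentially the same route as the paper: decompose $h$ in the Kohnen basis \eqref{e:heckebasis}, single out a newform $f_0$ with Shimura lift $g_0$, convert via Proposition \ref{p:walds} to central values $L(\tfrac12,g_f\otimes\chi_d)$, amplify the $g_0$-value by a resonator whose first moment is computed by Proposition \ref{key-proposition}, use Rankin--Selberg orthogonality to ensure the resonator does not amplify $g_f$ for $f\ne f_0$, and finish with a H\"older/large-sieve counting step. Two differences in organisation are worth noting. First, the paper never has to touch off-diagonal correlations $\sum_d c(f,n)\overline{c(f',n)}R(d)^2$: it applies the triangle inequality and Cauchy--Schwarz pointwise in $n$, giving $|c(h,n)| \ge A\sqrt{L(\tfrac12,g_0\otimes\chi_d)} - B\bigl(\sum_{f\ne f_0}L(\tfrac12,g_f\otimes\chi_d)\bigr)^{1/2}$, so that the whole problem becomes the pure $L$-value statement of Proposition \ref{p:largep1} (estimates \eqref{estimate1}--\eqref{estimate4}). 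Your expansion of $|c(h,n)|^2$ can be made to work, but your invocation of a ``twisted Rankin--Selberg first moment'' for the mixed terms is more machinery than is needed: a plain Cauchy--Schwarz bounds each off-diagonal resonated sum by the geometric mean of the two diagonal first moments, which is $\ll X\mathcal R\exp((\tfrac14+o(1))L/\log L)$ and hence negligible against the $f_0$-diagonal of size $X\mathcal R\exp((\tfrac12+o(1))L/\log L)$. Second, and more substantively, your coefficients $A_f(d)$ still depend on $d$ through the symbols $\bigl(\tfrac dp\bigr)$ for $p\mid N$, and could vanish on exactly the discriminants the resonator favours; the paper removes this issue at the outset by restricting to a fixed reduced residue class $\eta\pmod{4N}$ chosen so that the resulting constant $\beta_{f_0}\ne 0$ (this restriction is also what makes Proposition \ref{key-proposition}, stated for $d\in\mathcal D_{N,\eta}$, directly applicable, and it simultaneously enforces the Atkin--Lehner sign conditions for $f_0$). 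You should build this residue-class restriction into your argument; with it, your sketch goes through along the lines of the paper.
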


\noindent We will prove this theorem by combining methods of \cite{gun2020large} and \cite{RadziwillSound, LesterRadziwill2019signs}. Our first job is to reduce the question to bounding central values of $L$-functions. This is done by using the explicit form of Waldspurger's formula due to Kohnen (Proposition \ref{p:walds} above).

\subsection{Reduction to bounds on $L$-values}


Fix an integer $k \ge 2$ and an odd squarefree integer $N\geq 1$ throughout Section $4$. Let $h$ be as in Theorem \ref{thm:large2}. We use the basis \eqref{e:heckebasis} to write \[h = \sum_{\substack{r,\ell\geq 1\\
r\ell|N}}\sum_{f\in\B_{k+\frac12,4\ell}^\text{new}} \alpha_{r,\ell, f} f|U(r^2),\] where the coefficients $\alpha_{r, \ell, f}$ depend only on $r, \ell, f,$ and $h$. For each odd squarefree $n$, we use Lemma \ref{l:fourierUr2} to get the following identity for the Fourier coefficients:
\[
c(h,n)= \sum_{\ell|N}\sum_{f\in\B_{k+\frac12,4\ell}^\text{new}} c(f,n) \sum_{r| \frac{N}{\ell}}\alpha_{r,\ell,f} \prod_{p|r}\left(\lambda_f(p) - \frac{1}{\sqrt{p}}\left(\frac{(-1)^kn}{p} \right) \right).
\]

We already know that $c(h,n) \neq 0$ for some odd squarefree $n$ (this follows from \cite{sahaschmidt} for example). So there exist $\ell_0|N$, $f_0\in\B_{k+\frac12,4\ell_0}^\new$, and a reduced residue class $\eta$ mod $4N$ such that
\[\tst \eta \equiv 1 \pmod{4}, \quad \left(\frac{\eta}p\right)=w_p \quad \text{ for each } p|\ell_0,\]
\[\sum_{r| \frac{N}{\ell_0}}\alpha_{r,\ell_0,f_0} \prod_{p|r}\left(\lambda_{f_0}(p) - \frac{1}{\sqrt{p}}\left(\frac{\eta}{p} \right) \right) \neq 0. \]
Above, $w_p$ is the eigenvalue of the Atkin-Lehner operator $W_p$ acting on $f_0$.
For brevity, we denote for each $f \in \B_{k+\frac12,4\ell}^\new$
\[\beta_{f} = \sum_{r| \frac{N}{\ell}}\alpha_{r,\ell,f} \prod_{p|r}\left(\lambda_{f}(p) - \frac{1}{\sqrt{p}}\left(\frac{\eta}{p} \right) \right).\]
We will denote the Shimura lift of $f\in\B_{k+\frac12,4\ell}^\new$ by $g_{f}\in S_{2k}^\new(\ell)$ with Fourier coefficients $m^{k - \frac12}\lambda_{g_f}(m)$ normalized so that $\lambda_{g_f}(1)=1$. Also, write $g_0$ for $g_{f_0}$ and $m^{k-\frac12}\lambda_0(m)$ for its Fourier coefficients. For each odd squarefree integer $n$ such that $d=(-1)^k n \equiv \eta$ (mod $4N$), we use the triangle inequality, Cauchy-Schwarz, and Proposition \ref{p:walds} to obtain
\begin{equation}\label{e:cgestimate}\begin{split}\tst |c(h, n)| &= \tst \left|\sum_{\ell|N}\sum_{f\in\B_{k+\frac12,4\ell}^\text{new}}
 c(f,n) \beta_f\right| \\  & \tst \ge   |\beta_{f_0} c_{f_0}(n)|    -\sqrt{\left(\sum_{\ell|N}\sum_{f_0 \ne f\in\B_{k+\frac12,4\ell}^\text{new}} | c(f,n) |^2 \right) \left(\sum_{\ell|N}\sum_{f_0 \ne f\in\B_{k+\frac12,4\ell}^\text{new}} | \beta_f|^2 \right)}\\  & \tst \ge A \sqrt{L\left(\frac12,g_0\otimes\chi_d\right)} - B \sqrt{\sum_{\ell|N}\sum_{f_0 \ne f\in\B_{k+\frac12,4\ell}^\text{new}} L\left(\frac12,g_f\otimes\chi_d\right)},\end{split}
 \end{equation}
where $A>0$ and $B>0$ are independent of $d$.
Now, Theorem \ref{thm:large2} follows from the following auxiliary result.
\begin{proposition}\label{p:largep1} Let $C\ge 0$ be a constant, $\varepsilon>0$, and let $\eta$ (mod $4N$) be a fixed reduced residue class with $\eta\equiv 1$ (mod $4$). Given $\ell_0|N$ let $f_0 \in \B_{k+\frac12,4\ell_0}^\new$ be a newform as above with Shimura lift $g_0$. For sufficiently large $X$, there are $\ge X^{1-\varepsilon}$ odd squarefree integers $n \in [X, 2X]$, such that $d=(-1)^kn\equiv\eta\,(\text{mod }4N)$ and
\begin{align}\label{main-estimate}
L\left(\tfrac12,g_0\otimes\chi_d\right)>C\sum_{\ell|N}\sum_{\substack{f\in\B_{k+\frac12,4\ell}^\new\\
(\ell,f)\neq(\ell_0,f_0)}}L\left(\tfrac12,g_f\otimes\chi_d\right)+ \exp\left(\frac1{40}\sqrt{\frac{\log X}{\log\log X}}\right).
\end{align}
\end{proposition}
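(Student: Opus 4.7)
I would prove Proposition \ref{p:largep1} by the resonance method of Soundararajan and Gun-Kohnen-Soundararajan, suitably modified to handle the level $N$ via the techniques of \cite{SoundYoung} and \cite{RadziwillSound}. Introduce a resonator
\[
R(d) \;=\; \sum_{\substack{\ell \le L_0 \\ \ell \text{ squarefree}\\ (\ell, 4N)=1}} r(\ell)\,\chi_d(\ell), \qquad L_0 = X^\theta,
\]
where $r$ is multiplicative on squarefrees with $r(p)$ a small scalar multiple of $\lambda_0(p)/\sqrt{p}$ on a carefully chosen range of ``resonating'' primes (roughly $p \in [L_0/(\log L_0)^A,\,L_0]$) and $r(p)=0$ elsewhere, and $\theta$ is a small positive constant. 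The design forces $R(d)^2$ to correlate with the Euler product of $L(\tfrac12, g_0 \otimes \chi_d)$ while remaining essentially orthogonal to $L(\tfrac12, g_f \otimes \chi_d)$ for every $f \not\simeq f_0$.

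\textbf{Moment computations.} Restrict throughout to admissible $d = (-1)^k n$ with $n \in [X, 2X]$ odd squarefree, $(n,N)=1$ and $d \equiv \eta \pmod{4N}$, and set
\[
M_0 \;=\; \sumfund_d R(d)^2, \qquad M_1(f) \;=\; \sumfund_d L(\tfrac12, g_f \otimes \chi_d)\,R(d)^2
\]
for $f$ ranging over $\bigcup_{\ell\mid N} \B^\new_{k+\frac12, 4\ell}$. Expanding $R(d)^2$ as a double sum over $\ell_1, \ell_2$ and applying the approximate functional equation to $L(\tfrac12, g_f \otimes \chi_d)$, the evaluation of $M_1(f)$ reduces to the twisted first moment $\sumfund_d \chi_d(m)\, L(\tfrac12, g_f \otimes \chi_d)$ for $m$ of bounded size. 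This is Proposition \ref{key-proposition}, proved by Poisson summation on the dual side as in \cite{SoundYoung} with the extra local factors at primes dividing $4N$ handled along the lines of \cite{RadziwillSound}. Collecting diagonal contributions to main terms gives
\[
\frac{M_1(f)}{M_0} \;=\; \mathcal{R}(f)(1+o(1)), \qquad \mathcal{R}(f) \;\asymp\; \prod_p \frac{1 + 2 r(p)\lambda_f(p)/\sqrt{p} + r(p)^2/p}{1 + r(p)^2}.
\]
The Soundararajan optimisation then yields $\mathcal{R}(f_0) \ge \exp\!\bigl(c\sqrt{\log X/\log\log X}\bigr)$ for some constant $c$ slightly exceeding $1/40$, while the Rankin-Selberg estimate $\sum_{p \le x}\lambda_0(p)\lambda_f(p)/p = O(1)$ (as opposed to $\sum_{p \le x}\lambda_0(p)^2/p \sim \log\log x$) forces $\mathcal{R}(f) = O(1)$ for every $f \not\simeq f_0$.

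\textbf{From weighted mass to cardinality.} Set $V = \exp(\tfrac{1}{40}\sqrt{\log X/\log\log X})$. On the bad set of admissible $d$'s where \eqref{main-estimate} fails one has $L(\tfrac12, g_0 \otimes \chi_d) \le V + C\sum_{f\neq f_0} L(\tfrac12, g_f \otimes \chi_d)$, so the bad set contributes at most
\[
V M_0 + C\sum_{f\neq f_0} M_1(f) \;=\; o\bigl(M_1(f_0)\bigr)
\]
to $\sumfund_d L(\tfrac12, g_0 \otimes \chi_d)\,R(d)^2$, because $\mathcal{R}(f_0)/V \to \infty$. Hence the ``good'' set $\mathcal{G}$ carries $(1-o(1))M_1(f_0)$ of the total mass. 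To upgrade this to $|\mathcal{G}| \gg X^{1-\varepsilon}$, I follow the sparsity argument of \cite{gun2020large}: a direct Euler-product computation of the fourth moment $\sumfund_d R(d)^4$ (comparable to $X^{-1+o(1)} M_0^2$, reflecting that $R$ is evenly spread across admissible $d$'s) combines with the second-moment estimate $\sumfund_d L(\tfrac12, g_0 \otimes \chi_d)^2 \ll X^{1+\varepsilon}$ from Heath-Brown's quadratic large sieve (cf.\ the proof of Proposition \ref{prop:moments}) through H\"older's inequality to deliver the required cardinality bound.

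\textbf{Main obstacle.} The principal technical burden is Proposition \ref{key-proposition}: an asymptotic, uniform in $m$ up to a small power of $X$, for $\sumfund_d \chi_d(m)\, L(\tfrac12, g_f \otimes \chi_d)$ in the presence of the fixed congruence $d \equiv \eta \pmod{4N}$ and coprimality $(n, 2N)=1$. The thinning by this modulus introduces non-standard local factors in the quadratic Gauss sums that appear when one applies Poisson summation on the dual side, and the off-diagonal terms must be controlled uniformly in both $m$ and $4N$; the \cite{RadziwillSound} treatment of the analogous second moment of $\GL_2$ twists supplies the template, but the first-moment adaptation with careful bookkeeping at primes dividing $4N\ell$ requires fresh work. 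A secondary point is verifying $\mathcal{R}(f) = O(1)$ uniformly across the finite family $\bigcup_{\ell\mid N}\B^\new_{k+\frac12,4\ell} \setminus \{f_0\}$, which reduces to uniform Rankin-Selberg bounds.
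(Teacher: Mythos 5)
Your overall strategy is the paper's: a resonator $R(d)$ built from the Hecke eigenvalues $\lambda_0(p)$ of the Shimura lift $g_0$, the twisted first moment of Proposition \ref{key-proposition} (proved via Poisson summation following \cite{SoundYoung,RadziwillSound}) as the key input, positivity plus the Rankin--Selberg orthogonality $\sum_{p\le x}\lambda_0(p)\lambda_{g_f}(p)\log p = o(x)$ for $f\neq f_0$ to discard the competing $L$-values, and a H\"older/large-sieve step to convert the weighted mass into a count of discriminants. However, two of your steps would fail as written. First, your resonator is mis-specified: you place the resonating primes in $[L_0/(\log L_0)^A, L_0]$ with $L_0=X^{\theta}$ and take $r(p)\asymp \lambda_0(p)/\sqrt{p}$ there, but then $\sum_p r(p)\lambda_0(p)^2/\sqrt{p} \ll A\log\log L_0/\log L_0 = o(1)$, so $\mathcal R(f_0)=O(1)$ and there is no resonance gain at all; the Soundararajan choice (used in the paper, following \cite{gun2020large}) is $r(p)=L/(\sqrt{p}\log p)$ on primes of \emph{polylogarithmic} size $L^2\le p\le L^4$, $L\asymp\sqrt{\log M\log\log M}$, $M=X^{1/24}$, which is what produces $\exp\bigl((\tfrac12+o(1))L/\log L\bigr)$ and hence the exponent $1/40$ after the relation between $M$ and $X$ is tracked.

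Second, the final ``from weighted mass to cardinality'' step does not close with the moments you propose. To extract $|\mathcal G|$ from $\sum_{d\in\mathcal G}L(\tfrac12,g_0\otimes\chi_d)R(d)^2$ by H\"older you need exponents $(p,q,r)$ with $1/p+1/q+1/r=1$ applied to $1_{\mathcal G}\cdot L\cdot R^2$; using the second moment of $L$ forces $q=2$ and using the fourth moment of $R$ forces $r=2$, leaving $1/p=0$, so no factor $|\mathcal G|^{1/p}$ survives. (Replacing the second moment of $L$ by its fourth moment does not help: unconditionally Heath-Brown's large sieve only gives $\ll X^{2+\varepsilon}$ there, which destroys the bound.) The paper instead uses the \emph{sixth} moment of the resonator, $\sum_d|R(d)|^6\ll X\exp\bigl(O(\log X/\log\log X)\bigr)$ (available from \cite{gun2020large} because $M^6\le X^{1/4}$), together with exponents $(6,2,3)$, and then $|\mathcal S|\gg X^{1-\varepsilon}$ follows since all resonator factors are $X^{o(1)}$. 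Both defects are repairable within your framework, but as stated the resonator yields no large-value gain and the H\"older bookkeeping cannot produce the cardinality bound.
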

We first explain how Proposition \ref{p:largep1} implies Theorem \ref{thm:large2}. 
Put $L_0 = L\left(\frac12,g_0\otimes\chi_d\right)$ and $L_1 = \sum_{\ell|N}\sum_{f_0 \ne f\in\B_{k+\frac12,4\ell}^\new}L\left(\frac12,g_f\otimes\chi_d\right)$. Put $C=2B^2/A^2$. Combining  Proposition \ref{p:largep1}  and \eqref{e:cgestimate},
\begin{align*}\tst
|c(h, n)| &\ge A \sqrt{L_0} - B \sqrt{L_1} \ > A \sqrt{CL_1 + \exp\left(\frac1{40}\sqrt{\frac{\log X}{\log\log X}}\right)} -  B \sqrt{L_1} \\ & \ge \frac{A}{\sqrt{2}}\left(\sqrt{CL_1} + \exp\left(\frac1{80}\sqrt{\frac{\log X}{\log\log X}}\right)\right)  -  B \sqrt{L_1}\\ &=  \frac{A}{\sqrt{2}} \exp\left(\frac1{80}\sqrt{\frac{\log X}{\log\log X}}\right) \ge  \exp\left(\frac{1}{82} \sqrt{\frac{\log n}{\log \log n}} \right)
\end{align*}
for sufficiently large $X$. We now proceed with the proof of Proposition \ref{p:largep1}. For $\eta\equiv 1$ (mod $4$), let
\[\mathcal D_{N,\eta}=\{d=(-1)^kn\,:\,\mu^2(2n)\neq 0,\,(n,N)=1,\, d\equiv \eta\,(\text{mod }4N)\} \]
and
\[\mathcal D_{N,\eta}(X)=\mathcal D_{N,\eta}\cap[X,2X].\]
For each such $d$ we introduce a resonance polynomial
\begin{align*}
R(d)=\sum_{m\leq M}r(m)\lambda_0(m)\chi_d(m),
\end{align*}
where the coefficients $r(m)$ are multiplicative and supported on squarefree integers. At primes we set
\begin{align*}
r(p)=\begin{cases}
\frac L{\sqrt p \log p}&\quad\text{if }L^2\leq p\leq L^4\,\text{and }p\nmid N \\
0&\quad\text{otherwise}
\end{cases}
\end{align*}
where $L=\frac18\sqrt{\log M\log\log M}$ with $M=X^{1/24}$.

The strategy to prove (\ref{main-estimate}) is to consider the quantity
\begin{align*}
\sum_{d\in\mathcal D_{N,\eta}(X)}L\left(\tfrac12,g_0\otimes\chi_d\right)|R(d)|^2.
\end{align*}
Let $\mathcal S$ be the subset of $\mathcal D_{N,\eta}(X)$ for which the estimate (\ref{main-estimate}) holds. Then certainly
\begin{align*}
&\tst \sum_{d\in\mathcal D_{N,\eta}(X)}L\left(\frac12,g_0\otimes\chi_d\right)|R(d)|^2\\
&\tst \leq\sum_{d\in\mathcal D_{N,\eta}(X)}\left(\sum_{\substack{\ell=1\\
\ell|N}}^\infty\sum_{\substack{f\in\B_{k+\frac12,4\ell}^\new\\
(\ell,f)\neq(\ell_0,f_0)}}CL\left(\frac12,g_f\otimes\chi_d\right)+\exp\left(\frac1{40}\sqrt{\frac{\log X}{\log\log X}}\right)\right)|R(d)|^2\\
&\tst\qquad\qquad+\sum_{d\in\mathcal S}L\left(\frac12,g_0\otimes\chi_d\right)|R(d)|^2.
\end{align*}

Suppose that the following estimates hold:

\begin{align}\label{estimate1}
\sum_{d\in\mathcal D_{N,\eta}(X)}L\left(\tfrac12,g_0\otimes\chi_d\right)|R(d)|^2\gg_{k,N} X\cdot\mathcal R\cdot\exp\left(\left(\frac12+o(1)\right)\frac L{\log L}\right),
\end{align}

\begin{align}\label{estimate2}
\sum_{d\in\mathcal D_{N,\eta}(X)}L\left(\tfrac12,g_f\otimes\chi_d\right)|R(d)|^2\ll_{k,N} X\cdot\mathcal R\cdot\exp\left(o\left(\frac L{\log L}\right)\right) \quad \text{for $f\neq f_0$,}
\end{align}

\begin{align}\label{estimate3}
\sum_{d\in\mathcal D_{N,\eta}(X)}|R(d)|^2\leq\frac{2X}{\pi^2}\cdot\mathcal R+O(X),
\end{align}

\begin{align}\label{estimate4}
\sum_{d\in\mathcal D_{N,\eta}(X)}|R(d)|^6\ll X\cdot\exp\left(O\left(\frac{\log X}{\log\log X}\right)\right),
\end{align}
where
\begin{align*}
\mathcal R=\prod_{L^2\leq p\leq L^4}\left(1+r(p)^2\lambda_0(p)^2\right).
\end{align*}

Assuming \eqref{estimate1}-\eqref{estimate4} the proof of Proposition \ref{p:largep1} can be finished as follows. We observe that
\begin{align*}
X\cdot\mathcal R\cdot\exp\left(\left(\frac12+o(1)\right)\frac L{\log L}\right)&\ll_{k,N}\sum_{d\in\mathcal D_{N,\eta}(X)}L\left(\tfrac12,g_0\otimes\chi_d\right)|R(d)|^2\\
&\ll_{k,N} X\cdot\mathcal R\cdot\exp\left(o\left(\frac L{\log L}\right)\right)+\sum_{d\in\mathcal S}L\left(\tfrac12,g_0\otimes\chi_d\right)|R(d)|^2
\end{align*}
by using \eqref{estimate1}, \eqref{estimate2} and \eqref{estimate3}. Hence,
\begin{align}\label{lower-bound}
X\cdot\mathcal R\cdot\exp\left(\left(\frac12+o(1)\right)\frac L{\log L}\right)\ll_{k,N} \sum_{d\in\mathcal S}L\left(\tfrac12,g_0\otimes\chi_d\right)|R(d)|^2.
\end{align}
On the other hand, r.h.s of the previous display can be estimated by H\"older's inequality and \eqref{estimate4} as
\begin{align*}
&\leq \left(\sum_{d\in\mathcal D_{N,\eta}(X)}L\left(\tfrac12,g_0\otimes\chi_d\right)^2\right)^{1/2}\cdot |\mathcal S|^{1/6}\left(\sum_{d\in\mathcal S}|R(d)|^6\right)^{1/3}\\
&\ll_{k,N} X^{1/2+\varepsilon/2}|\mathcal S|^{1/6}\cdot\left(X\cdot\exp\left(O\left(\frac{\log X}{\log\log X}\right)\right)\right)^{1/3},
\end{align*}
where, as before, the average of the squares of central $L$-values is estimated by using the quadratic large sieve of Heath-Brown \cite{HB} (here we can extend the sum to all fundamental discriminants $\le X$ in magnitude by non-negativity). Combining this with \eqref{lower-bound} gives $|\mathcal S|\gg_{k,N} X^{1-\varepsilon/2} \ge X^{1-\eps}$, as desired.

So it suffices to establish estimates \eqref{estimate1}-\eqref{estimate4}. Notice that \eqref{estimate3} and \eqref{estimate4} follow directly from Proposition 3 of \cite{gun2020large} by simply estimating
\[\sum_{d\in\mathcal D_{N,\eta}(X)}|R(d)|^2\leq\sum_{\substack{X\leq (-1)^kd\leq 2X\\
d\equiv 1\,(\text{mod }4)}}|R(d)|^2\qquad\text{and }\sum_{d\in\mathcal D_{N,\eta}(X)}|R(d)|^6\leq\sum_{\substack{X\leq (-1)^kd\leq 2X\\
d\equiv 1\,(\text{mod }4)}}|R(d)|^6.\]
The other two estimates are consequences of the following first moment result.
\begin{proposition}\label{key-proposition}
Let $N\geq 1$ be a positive integer and $g\in S_{2k}^\new(\ell)$ for some $\ell|N$. Suppose that $u$ is an odd positive integer coprime to $N$ and write $u=u_1u_2^2$ with $u_1$ squarefree. Let $\Phi$ denote a smooth and compactly supported function in $[1/2,5/2]$. Then
\begin{align*}
&\sum_{d\in \mathcal D_{N,\eta}}L\left(\tfrac12,g\otimes\chi_d\right)\chi_d(u)\Phi\left(\frac{|d|}X\right)\\
&=\frac{X\lambda_g(u_1)}{2Nu_1^{1/2}}\left(\int_0^\infty\Phi(\xi)\,\mathrm d \xi\right)L_{g,\eta}\left(\tfrac12\right)L\left(1,\mathrm{Sym}^2g\right)\mathcal G(1;u)+O_{k,N, \Phi, \varepsilon}\left(X^{7/8+\varepsilon}u^{3/8+\varepsilon}\right).
\end{align*}
Here $L_{g,\eta}(1/2)\neq 0$ is the Dirichlet series given in (\ref{DS1}) and $\mathcal G(1;u)$ is the Euler product defined in (\ref{firstEulerproduct}). Furthermore, $\mathcal G(1;\cdot)$ is a multiplicative function satisfying $\mathcal G(1;p^k)=1+O(1/p)$ at prime powers.
\end{proposition}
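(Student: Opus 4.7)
The plan is to adapt the approach of Soundararajan--Young \cite{SoundYoung} for the first moment of quadratic twists of $\GL_2$ $L$-functions, modified to handle the additional congruence condition $d \equiv \eta \pmod{4N}$, the coprimality $(d,N)=1$, and the restriction to $d = (-1)^k n$ with $n$ odd and squarefree, following the template of Radziwi\l\l--Soundararajan \cite{RadziwillSound}. The skeleton is: apply an approximate functional equation, swap sums, detach squarefreeness by M\"obius inversion, apply Poisson summation to the sum over $d$ in arithmetic progression, extract the main term from the zero frequency, and bound the dual Gauss sums.

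In detail, I would first write
\[
L(\tfrac12, g \otimes \chi_d) \;=\; \sum_{n \ge 1} \frac{\lambda_g(n) \chi_d(n)}{\sqrt n}\, V\!\left(\frac{n}{|d|\sqrt{\ell}}\right) \,+\, \varepsilon(g,d) \sum_{n \ge 1} \frac{\lambda_g(n) \chi_d(n)}{\sqrt n}\, \widetilde V\!\left(\frac{n}{|d|\sqrt{\ell}}\right),
\]
with smooth rapidly-decaying cutoffs $V,\widetilde V$, so that effectively $n \ll X^{1+\varepsilon}$. Multiplying by $\chi_d(u)\Phi(|d|/X)$ and using $\chi_d(u)=\chi_d(u_1)$ when $(d,u_2)=1$ (a condition enforced by a further inclusion-exclusion and absorbed into the error), interchanging the $d$ and $n$ sums reduces matters to evaluating character sums of the shape
\[
\Sigma(m) \;=\; \sum_{d \in \mathcal D_{N,\eta}} \chi_d(m)\, \Phi\!\left(\frac{|d|}{X}\right), \qquad m = u_1 n.
\]

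To evaluate $\Sigma(m)$, I would detach the squarefree condition on $|d|$ via $\mu^2(\cdot) = \sum_{a^2 \mid \cdot} \mu(a)$, truncating at $a \le A$, and pick out $d \equiv \eta \pmod{4N}$ by writing $d = \eta + 4Nj$. Poisson summation on $j$ against a modulus of size essentially $8Na^2 m$ then isolates a zero-frequency contribution that survives only when the squarefree part of $m$ is a perfect square. After M\"obius unfolding this forces $n = u_1 s^2$ up to small arithmetic factors, so the diagonal sum reassembles into an Euler product: the unramified primes contribute $L(1, \Sym^2 g)$ by Rankin--Selberg, the Hecke relation for $\lambda_g(u_1 s^2)$ extracts $\lambda_g(u_1)/\sqrt{u_1}$, and the local factors at primes dividing $2Nu$ collect precisely into $L_{g,\eta}(\tfrac12)\mathcal G(1;u)$. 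The nonzero Poisson frequencies are bounded by the Weil estimate for quadratic Gauss sums together with the Rankin--Selberg bound $\sum_{n \le Y} |\lambda_g(n)|^2 \ll Y$; optimizing the truncation parameter $A$ against the Poisson dual length then yields the claimed error $O_{k,N,\Phi,\varepsilon}(X^{7/8+\varepsilon}u^{3/8+\varepsilon})$.

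The main technical obstacle is the bookkeeping of local factors at primes dividing $2Nu$: the interplay between the congruence class $\eta$, the Atkin--Lehner eigenvalues of $g$ at primes dividing $\ell$, the square part $u_2^2$ of $u$, and the small square-full residues left over from the M\"obius truncation must be tracked carefully in order to identify the correct nonvanishing constant $L_{g,\eta}(\tfrac12)$ and the multiplicative correction $\mathcal G(1;u)$ with the stated normalization $\mathcal G(1;p^k) = 1 + O(1/p)$. The balance of parameters needed to reach the exponent $7/8$ is standard (as in \cite{RadziwillSound}), but the coordination between the M\"obius truncation, the Poisson dual length, and the approximate functional equation cutoff has to be done in the presence of both the arithmetic progression modulo $4N$ and the coprimality with $N$, which is where the level-aspect complications require extra care.
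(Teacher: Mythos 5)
Your proposal is correct and follows essentially the same route as the paper: an approximate functional equation (the paper uses the one-piece version adapted from Lemma 5 of \cite{RadziwillSound}), M\"obius detection of squarefreeness, Poisson summation in the progression $d\equiv\eta\pmod{4N}$ as in \cite[Lemma 7]{RadziwillSound}, the zero frequency with $mu$ a square reassembling into $\lambda_g(u_1)u_1^{-1/2}L_{g,\eta}(\tfrac12)L(1,\Sym^2 g)\mathcal G(1;u)$ after a contour shift, and the nonzero frequencies bounded to give $O(X^{7/8+\varepsilon}u^{3/8+\varepsilon})$. The only cosmetic differences (your two-term functional equation with a root number, and truncating the M\"obius sieve rather than keeping the full $\alpha$-sum) do not change the argument in any essential way.
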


\subsection{A twisted first moment asymptotic}

In this subsection, we sketch the proof of the proposition above. The starting point is the approximate functional equation (which follows by an easy modification of the proof of Lemma 5 in \cite{RadziwillSound}) saying that
\begin{align*}
 L\left(\frac12,g\otimes\chi_d\right)=2\sum_{\substack{m=1\\
(m,2N)=1}}^\infty\frac{\lambda_g(m)\chi_d(m)}{\sqrt m}W_{g,\eta}\left(\frac m{|d|}\right),
\end{align*}
where $W_{g,\eta}$ is a smooth weight function defined by, for $c>1/2$,
\begin{align}\label{DS1}
W_{g,\eta}(\xi)=\frac1{2\pi i}\int_{(c)}L_{g,\eta}\left(s+\frac12\right)\frac{\Gamma(s+k)}{\Gamma(k)}\left(\frac{\sqrt \ell}{2\pi\xi}\right)^s\frac{\mathrm d s}s, \qquad L_{g,\eta}(s)=\sum_{\substack{m=1\\
p|m\implies p|2N}}^\infty\frac{\lambda_g(m)\chi_d(m)}{m^s}.
\end{align}
Notice that the value of $L_{g,\eta}(s)$ is the same for each $d\equiv\eta$ (mod $4N$). The weight function satisfies $W_{g,\eta}(\xi)=L_{g,\eta}(1/2)+O_{k,N,\varepsilon}(\xi^{\frac12-\varepsilon})$ as $\xi\longrightarrow 0$ and $W_{g,\eta}(\xi)\ll_{k,N,A}|\xi|^{-A}$ for any $A\geq 1$ as $\xi\longrightarrow\infty$. Thus the sum we have to evaluate takes the shape
\begin{align*}
2\sum_{\substack{m=1\\
(m,2N)=1}}^\infty\frac{\lambda_g(m)}{\sqrt m}\sum_{d\in\mathcal D_{N,\eta}}\chi_d(mu)W_{g,\eta}\left(\frac m{|d|}\right)\Phi\left(\frac{|d|}X\right).
\end{align*}
Notice that by definition any $d\in\mathcal D_{N,\eta}$ is squarefree and odd. We pick out this property by the identity
\begin{equation}
\sum_{\substack{\alpha=1\\
(\alpha,2N)=1\\
\alpha^2|d}}^\infty\mu(\alpha)=\begin{cases}
1\quad\text{if }d\,\text{ is squarefree}\\
0\quad\text{otherwise}
\end{cases}
\end{equation}
Note that the above identity holds without the condition $(\alpha,2N)=1$, but this can be added as by construction $(d,2N)=1$ for $d\in\mathcal D_{N,\eta}$. Inserting this to the above expression gives that the $d$-sum is given by
\begin{align*}
\sum_{\substack{\alpha=1\\
(\alpha,2Nu)=1}}^\infty\mu(\alpha)\left(\frac{\alpha^2}{mu}\right)\sum_{r\equiv\eta\overline{\alpha^2}\,(\text{mod }4N)}\left(\frac r{mu}\right)W_{g,\eta}\left(\frac m{r\alpha^2}\right)\Phi\left(\frac{r\alpha^2}X\right).
\end{align*}
We will evaluate the $r$-sum by applying a version of Poisson summation formula \cite[Lemma 7]{RadziwillSound}. The terms where $mu$ is a square will contribute the main term in the zero-frequency term on the dual side and the rest will give the error term. By using standard arguments \cite{LesterRadziwill2019signs} the contribution of the latter terms can be bounded by $\ll_{k,N, \Phi, \varepsilon} X^{7/8+\varepsilon}u^{3/8+\varepsilon}$.

Using (\ref{DS1}) the zero-frequency contribution is given by
\begin{align}\label{roughmainterm}
\frac{X}{2N}\cdot\frac1{2\pi i}\int_{(c)}\left(\int_0^\infty\Phi(\xi)\xi^s\,\mathrm d \xi\right)L_{g,\eta}\left(s+\tfrac12\right)\frac{\Gamma(s+k)}{\Gamma(k)}\left(\frac{\sqrt \ell X}{2\pi}\right)^s
\sum_{\substack{\alpha=1\\
(\alpha,2Nu)=1}}^\infty\frac{\mu(\alpha)}{\alpha^2}\sum_{\substack{m=1\\
(m,2N\alpha)=1\\
mu \text{ is a square}}}^\infty\frac{\lambda_g(m)}{m^{s+1/2}}\frac{\varphi(mu)}{mu}\,\frac{\mathrm d s}s.
\end{align}
A simple computation shows that, for $(m,2N)=1$,
\begin{align*}
\sum_{\substack{\alpha=1\\
(\alpha,2Num)=1}}^\infty\frac{\mu(\alpha)}{\alpha^2}\cdot\frac{\varphi(mu)}{mu}=\prod_{p\nmid 2N}\left(1-\frac1{p^2}\right)\prod_{p|um}\left(1+\frac1p\right)^{-1}, \quad\text{ leading to}
\end{align*}

\begin{align}\label{productform}
\sum_{\substack{\alpha=1\\
(\alpha,2Nu)=1}}^\infty\frac{\mu(\alpha)}{\alpha^2}\sum_{\substack{m=1\\
(m,2N\alpha)=1\\
mu \text{ is a square}}}^\infty\frac{\lambda_g(m)}{m^{s+1/2}}\cdot\frac{\varphi(mu)}{mu}=\prod_{p\nmid 2N}\left(1-\frac1{p^2}\right)\sum_{\substack{m=1\\
(m,2N)=1\\
mu \text{ is a square}}}^\infty\frac{\lambda_g(m)}{m^{s+1/2}}\prod_{p|um}\left(1+\frac1p\right)^{-1}.
\end{align}
For $mu\text{ a square}$ and $u=u_1u_2^2$ with $u_1$ squarefree, it follows that $m=u_1\ell^2$ for some $\ell\in\mathbb Z$. Hence the r.h.s of the previous display is
\begin{align*}
&\prod_{p\nmid 2N}\left(1-\frac1{p^2}\right)\sum_{\substack{\ell=1\\
(\ell,2N)=1}}^\infty\frac{\lambda_g(u_1\ell^2)}{u_1^{s+1/2}\ell^{2s+1}}\prod_{p\mid u\ell}\left(1+\frac1p\right)^{-1}\\
&=\frac1{u_1^{s+1/2}}\prod_{p\nmid 2N}\left(1-\frac1{p^2}\right)\prod_{p\nmid 2Nu}\left(1+\sum_{k=1}^\infty\frac{\lambda_g(p^{2k})}{p^{k(2s+1)}}\left(1+\frac1p\right)^{-1}\right)\prod_{p|u}\left(\sum_{k=0}^\infty\frac{\lambda_g(p^{2k+\text{ord}_p(u_1)})}{p^{k(2s+1)}}\right)\left(1+\frac1p\right)^{-1}\\
&=\frac1{u_1^{s+1/2}}L(2s+1,\text{Sym}^2g)\prod_{p|2N}L_p(2s+1,\text{Sym}^2h)^{-1}\prod_{p|u}\left(1-\frac{1}p\right)\left(\sum_{k=0}^\infty\frac{\lambda_g(p^{2k+\text{ord}_p(u_1)})}{p^{k(2s+1)}}\right)\\
&\quad\cdot L_p(2s+1,\text{Sym}^2g)^{-1}\prod_{p\nmid 2Nu}\left(1-\frac1{p^2}\right)\left(1+\sum_{k=1}^\infty\frac{\lambda_g(p^{2k})}{p^{k(2s+1)}}\left(1+\frac1p\right)^{-1}\right)L_p(2s+1,\text{Sym}^2g)^{-1}.
\end{align*}
From this it is easy to see, by using the Euler product expression of the symmetric square $L$-function, that for $p\nmid 2Nu$ the corresponding Euler factor is
\[
=\left(1-\frac1p\right)\left(1-\frac1{p^{2s+1}}\right)\left(\frac1p\left(1-\frac{\alpha_p^2}{p^{2s+1}}\right)\left(1-\frac{\beta_p^2}{p^{2s+1}}\right)+1+\frac1{p^{2s+1}}\right),
\]
where $\{\alpha_p,\beta_p\}$ are the Satake parameters of the cusp form $g$ at $p$.

Similarly, for $p|u$, $p\nmid u_1$, the corresponding Euler factor is $\left(1-\frac{1}{p}\right)\left(1-\frac1{p^{4s+2}}\right)$.
For $p|u_1$, the corresponding Euler factor is $\left(1-\frac{1}{p}\right)\left(1-\frac1{p^{2s+1}}\right)\lambda_g(p)$
by using the relations $\lambda_g(p^j)=(\alpha_p^{j+1}-\beta_p^{j+1})/(\alpha_p-\beta_p)$ for $j\geq 0$ and $\alpha_p\beta_p=1$. And finally, for $p|2N$ the corresponding Euler factor is clearly $L_p(2s+1,\text{Sym}^2g)^{-1}$.

To summarize, the r.h.s of (\ref{productform}) equals
\[\frac{\lambda_g(u_1)}{u_1^{s+1/2}}L\left(2s+1,\text{Sym}^2g\right)\mathcal G(2s+1;u),\]
where $\mathcal G(2s+1;u)=\prod_p \mathcal G_p(2s+1;u)$ is the Euler product locally given by
\begin{align}\label{firstEulerproduct}
\mathcal G_p(2s+1;u)=\begin{cases}
L_p\left(2s+1,\text{Sym}^2 g\right)^{-1}&\quad\text{if }p|2N\\
\left(1-\frac1{p}\right)\left(1-\frac1{p^{2s+1}}\right)&\quad\text{if }p|u_1\\
\left(1-\frac1{p}\right)\left(1-\frac1{p^{4s+2}}\right)&\quad\text{if }p|u_2,\,p\nmid u_1\\
\left(1-\frac1p\right)\left(1-\frac1{p^{2s+1}}\right)\left(1+\frac1p\left(1-\frac{\alpha_p^2}{p^{2s+1}}\right)\left(1-\frac{\beta_p^2}{p^{2s+1}}\right)+\frac1{p^{2s+1}}\right)&\quad\text{if }p\nmid 2Nu.
\end{cases}
\end{align}
By estimating trivially it follows that $\mathcal G(2s+1;u)$ extends analytically to the domain $\Re(s)>-1/4$ and is bounded there by
\[ \ll\prod_{p\nmid 2Nu}\left(1+\frac{O(1)}{\sqrt p}\right)\ll_{N,\varepsilon} u^\varepsilon.\]
Consequently the $s$-integrand in (\ref{roughmainterm}) extends to an analytic function in the above domain (apart from a simple pole at $s=0$). Thus moving the line of integration in \eqref{roughmainterm} to the line $\Re(s)=-1/4+\varepsilon$ shows that the expression equals
\[ \frac{X\lambda_g(u_1)}{2Nu_1^{1/2}}\left(\int_0^\infty\Phi(\xi)\,\mathrm d \xi\right)L_{g,\eta}\left(\frac12\right)L\left(1,\text{Sym}^2g\right)\mathcal G(1;u)+O_{k,N, \Phi, \varepsilon}\left(X^{3/4+\varepsilon}\right),\]
where the main terms comes from the residue at $s=0$ and the error term from the contour shift. It follows immediately from the definition of $\mathcal G(s;u)$ that $\mathcal G(1;\cdot)$ is multiplicative and that $\mathcal G(1;p^k)=1+O(1/p)$ at prime powers. This concludes the sketch of the proof of Proposition \ref{key-proposition}.

\subsection{Proofs of the estimates} We are now ready to prove the estimates (\ref{estimate1}) and (\ref{estimate2}). As the arguments are similar to \cite{gun2020large}, we will be brief.  Let us denote
\[ \tst A_{g,\eta}(\Phi)=\frac1{2N}\left(\int_0^\infty\Phi(\xi)\,\mathrm d \xi\right)L_{g,\eta}\left(\frac12\right)L\left(1,\text{Sym}^2g\right)\]
and
\[\tst B_g(u)=\frac{\lambda_g(u_1)}{u_1^{1/2}}\mathcal G(1;u)\]
for $u=u_1u_2^2$ with $u_1$ squarefree.

Let $\Phi$ be a compactly supported smooth weight function. Our aim is now to evaluate
\[\sum_{d\in\mathcal D_{N,\eta}}L\left(\tfrac12,g\otimes\chi_d\right)|R(d)|^2\Phi\left(\frac{|d|}X\right)\]
for $g=g_f\in S_{2k}^\new(\ell)$, where $f\in\B_{k+\frac12,4\ell}^\new$ with $\ell|N$, by choosing $\Phi$ appropriately for given $f$.

By opening the definition of $R(d)$ and using Proposition \ref{key-proposition} the above sum equals
\begin{align*}
&\sum_{d\in\mathcal D_{N,\eta}}L\left(\tfrac12,g\otimes\chi_d\right)\Phi\left(\frac{|d|}X\right)\sum_{n_1,n_2\leq M}r(n_1)r(n_2)\lambda_0(n_1)\lambda_0(n_2)\chi_d(n_1n_2)\\
&=X\cdot A_{g,\eta}(\Phi)\sum_{n_1,n_2\leq M}r(n_1)r(n_2)\lambda_0(n_1)\lambda_0(n_2)B_g\left(n_1n_2\right)\\
&\qquad\qquad+O_{k,N,\Phi,\varepsilon}\left(X^{7/8+\varepsilon}\sum_{n_1,n_2\leq M}r(n_1)r(n_2)|\lambda_0(n_1)\lambda_0(n_2)|(n_1n_2)^{3/8+\varepsilon}\right)
\end{align*}
as $r(n)$ vanishes unless $(n,N)=1$. By using Deligne's bound we obtain as in \cite[\S 6]{gun2020large} that  the error term is $\ll_{k,N,\Phi} X^{99/100}$.


By making use of the fact that
\[\tst n_1n_2=\frac{n_1n_2}{(n_1,n_2)^2}\cdot(n_1,n_2)^2,\]
with the first factor on the r.h.s squarefree in our case as the function $r(n)$ is supported only on squarefree integers, the main term can be written as
\[\tst X\cdot A_{g,\eta}(\Phi)\sum_{n_1,n_2\leq M}r(n_1)r(n_2)\lambda_0(n_1)\lambda_0(n_2)\frac{\lambda_g\left(\frac{n_1n_2}{(n_1,n_2)^2}\right)}{\left(\frac{n_1n_2}{(n_1,n_2)^2}\right)^{1/2}}\cdot\mathcal G(1;n_1n_2).\]
Our aim is to use multiplicativity and so we need to  extend the sum over all integers. To do so we must show that the terms with max($n_1,n_2)>M$ can be added with a tolerable error. By using Rankin's trick these terms contribute
\begin{align*}
&\ll_{k,N} X\sum_{\text{max}(n_1,n_2)>M}r(n_1)r(n_2)|\lambda_0(n_1)\lambda_0(n_2)|\cdot|\mathcal G(1;n_1n_2)|\cdot\left|\frac{\lambda_g\left(\frac{n_1n_2}{(n_1,n_2)^2}\right)}{\left(\frac{n_1n_2}{(n_1,n_2)^2}\right)^{1/2}}\right|\\
&\ll_{k,N} X\sum_{n_1,n_2=1}^\infty r(n_1)r(n_2)|\lambda_0(n_1)\lambda_0(n_2)|\cdot|\mathcal G(1;n_1n_2)|\cdot\left|\frac{\lambda_g\left(\frac{n_1n_2}{(n_1,n_2)^2}\right)}{\left(\frac{n_1n_2}{(n_1,n_2)^2}\right)^{1/2}}\right|\left(\frac{n_1n_2}M\right)^\alpha
\end{align*}
for any $\alpha>0$, which is chosen optimally later.

Write $n=n_1n_2$ to express the double sum above as a single sum over $n$ and note that by the fact that $r(\cdot)$ is supported only on squarefree integers coprime to $N$ the only integers $n$ which contribute to the sum over $n$ satisfy $p^3 \nmid n$ and $(n,N)=1$. Hence, by multiplicativity of $r(\cdot)$ we can express the sum over $n$ as an Euler product and
the expression on the r.h.s of the previous display equals
\begin{align*}
&= XM^{-\alpha}\prod_{L^2\leq p\leq L^4}\left(1+2r(p)p^{\alpha-1/2}|\mathcal G(1;p)|\cdot\frac{|\lambda_0(p)\lambda_g(p)|}{\sqrt p}+r(p)^2p^{2\alpha}|\mathcal G(1;p^2)|\cdot|\lambda_0(p)|^2\right)\\
&\ll XM^{-\alpha}\exp\left(\sum_{L^2\leq p\leq L^4} \left(\frac{8Lp^{\alpha-1}}{\log p}+\frac{4L^2p^{2\alpha-1}}{(\log p)^2}\right) \left( 1+O\left( \frac1p\right)\right)\right),
\end{align*}
where the last estimate follows from Deligne's bound $|\lambda_g(p)|\leq 2$, the fact that $\mathcal G(1;p^k)=1+O(1/p)$, and the definition of $r(n)$.

Let us now choose $\alpha=1/(8\log L)$. Then by the prime number theorem and partial summation the above is
\[ \tst
\ll X\cdot\exp\left(-\frac{\log M}{8\log L}+\frac{8L}{\log L}+\frac{4L^2}{(\log L)^2}\right)\ll X
\]
by the choices $L=\frac18\sqrt{\log M\log\log M}$ and $M=X^{1/24}$. From the above arguments we deduce that
\begin{align}
\nonumber&\tst\sum_{d\in\mathcal D_{N,\eta}}
L\left(\frac12,g\otimes\chi_d\right)|R(d)|^2\Phi\left(\frac{|d|}X\right)\\
&\tst=\nonumber A_{g,\eta}(\Phi)\cdot X\cdot\prod_{L^2\leq p\leq L^4}\left(1+2r(p)\mathcal G(1;p)\cdot\frac{\lambda_0(p)\lambda_g(p)}{\sqrt p}+r(p)^2\mathcal G(1;p^2)\cdot \lambda_0(p)^2\right)+O_{k,N,\Phi}(X)\\ &\tst=A_{g,\eta}(\Phi)\cdot X\cdot\mathcal R\cdot\exp\left(2\sum_{L^2\leq p\leq L^4}\frac{r(p)\lambda_0(p)\lambda_g(p)}{\sqrt p}+O_{k,N,\Phi}\left(\frac L{(\log L)^3}\right)\right) + O_{k,N,\Phi}(X),
\end{align}
where the last step follows exactly as in \cite[\S 6]{gun2020large}.

We now apply this result with $g=g_f$, where $f\in\B_{k+\frac12,4\ell}^\new$ with $\ell|N$. For \eqref{estimate1} we choose $\Phi$ to be supported on the interval $[1,2]$ so that $\Phi(t)=1$ for $t\in[11/10,19/10]$ and satisfying $0\leq\Phi(t)\leq 1$. For \eqref{estimate2} we choose $\Phi$ to be supported in $[1/2, 5/2]$ with $\Phi(t)=1$ for $t\in[1,2]$ and again satisfying $0\leq\Phi(t)\leq 1$. These choices lead to
\begin{align*}
&\tst\sum_{d\in\mathcal D_{N,\eta}(X)}L\left(\frac12,g_0\otimes\chi_d\right)|R(d)|^2\\
&\tst\geq\frac{4X\mathcal R}{5N}\cdot L_{g_0,\eta}\left(\frac12\right)L\left(1,\text{Sym}^2g_0\right)\cdot\exp\left(2\sum_{L^2\leq p\leq L^4}\frac{r(p)\lambda_0(p)^2}{\sqrt p}+O\left(\frac L{(\log L)^3}\right)\right)+O_{k,N}(X)
\end{align*}
for $f=f_0$,
and
\begin{align*}
&\tst\sum_{d\in\mathcal D_{N,\eta}(X)}L\left(\frac12,g_f\otimes\chi_d\right)|R(d)|^2\\
&\tst\leq\frac{X\mathcal R}N\cdot L_{g_f,\eta}\left(\frac12\right)L\left(1,\text{Sym}^2g_f\right)\exp\left(2\sum_{L^2\leq p\leq L^4}\frac{r(p)\lambda_0(p)\lambda_{g_f}(p)}{\sqrt p}+O\left(\frac L{(\log L)^3}\right)\right)+O_{k,N}(X)
\end{align*}
for $f\neq f_0$.

To obtain \eqref{estimate1} and \eqref{estimate2} we use partial summation and the following estimate, which is a consequence of the Rankin-Selberg theory \cite[Theorem 3]{WuYe2007}:

\begin{lemma}
With the notation above we have
\[\sum_{p\leq x}\lambda_0(p)\lambda_{g_f}(p)\log p=x\cdot 1_{f=f_0}+o(x). \]
\end{lemma}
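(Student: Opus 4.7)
The identity is a direct application of the prime number theorem for Rankin--Selberg $L$-functions, which is the content of \cite[Theorem 3]{WuYe2007}.

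At any prime $p$ unramified for both $g_0$ and $g_f$ (i.e.\ $p \nmid \ell_0 \ell$), the local Euler factor of $L(s, g_0 \times g_f)$ has the form $\prod_{i,j}(1 - \alpha_{i,0}\alpha_{j,f} p^{-s})^{-1}$, where $\{\alpha_{1,0},\alpha_{2,0}\}$ and $\{\alpha_{1,f},\alpha_{2,f}\}$ are the Satake parameters of $g_0$ and $g_f$ at $p$. Taking the logarithmic derivative and using $\alpha_{1,0}+\alpha_{2,0}=\lambda_0(p)$ and $\alpha_{1,f}+\alpha_{2,f}=\lambda_{g_f}(p)$ gives
\[
-\frac{L'}{L}(s, g_0 \times g_f) = \sum_{n \ge 1} \frac{\Lambda_{g_0\times g_f}(n)}{n^s},\qquad \Lambda_{g_0\times g_f}(p) = \lambda_0(p)\lambda_{g_f}(p)\log p,
\]
for all unramified $p$. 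Since there are only finitely many ramified primes, and the contribution of prime powers $p^j$ with $j \ge 2$ to $\sum_{n\le x}\Lambda_{g_0\times g_f}(n)$ is $O(\sqrt{x}\log^2 x)$ by Deligne's bound $|\lambda_g(p)|\le 2$, we have
\[
\sum_{n\le x}\Lambda_{g_0\times g_f}(n)= \sum_{p\le x}\lambda_0(p)\lambda_{g_f}(p)\log p + O(\sqrt{x}\log^2 x).
\]

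The second step is to examine the behavior of $L(s, g_0\times g_f)$ at $s=1$. If $f=f_0$ then $g_f=g_0$, and Rankin--Selberg theory gives $L(s,g_0\times g_0)=\zeta(s)L(s,\Sym^2 g_0)$ up to finitely many Euler factors, so $L(s,g_0\times g_0)$ has a simple pole at $s=1$; consequently $-L'/L$ has residue $1$ there, regardless of the positive leading coefficient. If $f\neq f_0$ then $g_f$ and $g_0$ are distinct newforms: when $\ell\neq\ell_0$ this is automatic since their Atkin--Lehner levels differ, and when $\ell=\ell_0$ it follows from the fact that the Shimura lift \eqref{e:heckeiso} is an isomorphism of Hecke modules sending distinct newforms to distinct newforms. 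By Jacquet--Shalika, $L(s,g_0\times g_f)$ is then entire and the residue of $-L'/L$ at $s=1$ equals $0$.

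Applying \cite[Theorem 3]{WuYe2007}, which provides the asymptotic $\sum_{n\le x}\Lambda_{g_0\times g_f}(n)=\delta\cdot x+o(x)$ with $\delta$ equal to the residue of $-L'/L$ at $s=1$, yields the claimed identity in both cases with $\delta=1_{f=f_0}$. I do not expect any genuine obstacle here: the main thing to verify carefully is that the cited version of the Rankin--Selberg prime number theorem applies to a pair of (possibly distinct, possibly of different level) holomorphic newforms on $\GL_2$, with no self-duality or other hypothesis required beyond what is known unconditionally in the $\GL_2\times\GL_2$ setting.
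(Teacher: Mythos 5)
Your proposal is correct and follows essentially the same route as the paper, which deduces the lemma directly from the Rankin--Selberg prime number theorem of \cite[Theorem 3]{WuYe2007} applied to the pair $g_0, g_f$. The extra details you supply (the identification of the prime coefficients of $-L'/L(s,g_0\times g_f)$, the bound on higher prime powers, and the verification that $g_f\neq g_0$ for $f\neq f_0$ via the injectivity of the Shimura correspondence and the distinct levels) are exactly what the paper leaves implicit, so there is no divergence in method.
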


\section{Siegel cusp forms of degree 2}\label{s:main}
In this Section we  first review various properties of Siegel cusp forms of degree 2 and then go on to prove our main results stated in the introduction.
\subsection{Preliminaries}
Denote by $J$ the $4$ by $4$ matrix given by
$
J =
\left(\begin{smallmatrix}
0 & I_2\\
-I_2 & 0\\
\end{smallmatrix}\right)$ where $I_2$ is the identity matrix of size 2. Define the algebraic groups
   $\GSp_4$ and $\Sp_4$ over $\Z$ by
$$\GSp_4(R) = \{g \in \GL_4(R) \; | \; \T{g}Jg =
  \mu_2(g)J,\:\mu_2(g)\in R^{\times}\},$$
$$
\Sp_4(R) = \{g \in \GSp_4(R) \; | \; \mu_2(g)=1\},
$$
for any commutative ring $R$.  The Siegel upper-half space $\H_2$ of degree 2 is defined by
$$
\H_2 = \{ Z \in \mathrm{Mat}_{2\times 2}(\C)\;|\;Z =\T{Z},\ \Im(Z)
  \text{ is positive definite}\}.
$$

Let $k$ and $N$ be positive integers. Let $\Gamma^{(2)}_0(N) \subseteq \Sp_4(\Z)$ denote the Siegel congruence subgroup of  level $N$, i.e.,
\begin{equation}\label{defu1n}
 \Gamma^{(2)}_0(N) = \Sp_4(\Z) \cap \left(\begin{smallmatrix}\Z& \Z&\Z&\Z\\\Z& \Z&\Z&\Z\\N\Z& N\Z&\Z&\Z\\N\Z&N \Z&\Z&\Z\\\end{smallmatrix}\right).
\end{equation}

We define
$$
 g \langle Z\rangle = (AZ+B)(CZ+D)^{-1}\qquad\text{for }
 g=\left(\begin{smallmatrix} A&B\\ C&D \end{smallmatrix}\right) \in \Sp_4(\R),\;Z\in \H_2.
$$
We let $J(g,Z) = CZ + D$.
Let $S_k(\Gamma^{(2)}_0(N))$ denote the space of Siegel cusp forms of weight $k$ and level $N$; precisely, they consist of the holomorphic functions $F$ on
$\H_2$ which satisfy the relation
\begin{equation}\label{siegeldefiningrel}
F(\gamma \langle Z\rangle) = \det(J(\gamma,Z))^k F(Z)
\end{equation}
for $\gamma \in \Gamma_0^{(2)}(N)$, $Z \in \H_2$, and vanish at all the
cusps.
Any $F$ in $S_k(\Gamma^{(2)}_0(N))$ has a Fourier expansion \begin{equation}\label{siegelfourierexpansion}F(Z)
=\sum_{S \in \Lambda_2} a(F, S) e^{2 \pi i \Tr(SZ)}
\end{equation}
with $\Lambda_2$ defined in \eqref{e:Lambda2}. We have the relation \begin{equation}\label{fourierinvariance}a(F, T) = \det(A)^k\,a(F, \T{A}TA) \end{equation} for  $A \in \GL_2(\Z)$. In particular, the Fourier coefficient $a(F, T)$ depends only on the $\SL_2(\Z)$-equivalence class of $T$.
We say that a matrix $S \in \Lambda_2$ is fundamental if $\disc(S)= -4 \det(S)$ is a fundamental discriminant. Given a fundamental discriminant $d<0$ and $K= \Q(\sqrt{d})$, let $\Cl_K$ denote the ideal class group of $K$. It is well-known that the $\SL_2(\Z)$-equivalence classes of matrices in $\Lambda_2$ of discriminant $d$ can be identified with $\Cl_K$; so the expression $\sum_{S \in \Cl_K}a(F, S) \Lambda(S)$ makes sense for each $\Lambda \in \widehat{\Cl_K}$.

\subsection{Constructing half-integral weight forms}\label{s:constructhalf}
Each $F$ in $S_k(\Gamma^{(2)}_0(N))$ has a Fourier--Jacobi expansion
$F(Z) = \sum_{m > 0} \phi_m(\tau, z) e^{2 \pi i m \tau'}$ where we write $Z= \begin{pmatrix}\tau&z\\z&\tau' \end{pmatrix}$ and for each $m>0$,
\begin{equation}\label{jacobifourier}\phi_m(\tau, z) = \sum_{\substack{n,r \in \Z \\ 4nm> r^2}}a \left(F, \mat{n}{r/2}{r/2}{m}\right) e^{2\pi i (n \tau + r z)} \in J_{k,m}^{\text{cusp}}(N). \end{equation} Here $J_{k,m}^{\text{cusp}}(N)$ denotes the space of Jacobi cusp forms of weight $k$, level $N$ and index $m$.

Given a \emph{primitive} matrix $S = \mat{a}{b/2}{b/2}{c} \in \Lambda_2$ (i.e., $\gcd(a,b,c)=1$) we let $\P(S)$ denote the set of primes of the form $ax^2+bxy+cy^2$. The set $\P(S)$ is infinite; indeed by \cite[Theorem 1 (i)]{iwanprime}, \begin{equation}\label{e:iwaniec}\tst |\{p \in \P(S): p \le X\}| \gg_S  \frac{X}{\log X}.\end{equation}

For each prime $p$ dividing $N$, define the operator $U(p)$ acting on the space $S_k(\Gamma^{(2)}_0(N))$ via
\begin{equation}\label{upaction}a(U(p) F, S)=
 a(F, pS).\end{equation}

\begin{lemma}\label{l:yamana}Let $k>2$ be even and $N$ be squarefree. Let $F \in S_k(\Gamma^{(2)}_0(N))$ be an eigenfunction of the $U(p)$ operator for each prime $p|N$ \up{note that if $N=1$, this condition is trivially true}.

\begin{enumerate}
\item Then there exists $S_0 = \mat{a}{b/2}{b/2}{c} \in \Lambda_2$ such that $a(F,S_0) \neq 0$ and $d_0 = b^2-4ac$ is odd and squarefree \up{and hence, $S_0$ is fundamental}.

\item  Let $S_0$, $d_0$ be as above and let $p \nmid 2Nd_0$ be a prime such that $p \in \P(S_0)$.   Put $$h_p(z) =  \sum_{m=1}^\infty  a(m) e^{2 \pi i m z}, \quad \text{where } \quad a(m) = \sum_{\substack{0 \le \mu \le 2p-1 \\ \mu^2 \equiv -m \pmod{4p}}} a\left(F, \mat{\frac{m+\mu^2}{4p}}{\frac{\mu}{2}}{\frac{\mu}{2}}{p} \right). $$
Then $0 \neq h_p \in S^+_{k-\frac{1}{2}}(4pN)$. Furthermore, for each $m>0$ such that $\gcd(m, 4p)=1$,  \begin{equation}\label{e:connection}\tst a(m) = 2 a\left(F, \mat{\frac{m+\mu_0^2}{4p}}{\frac{\mu_0}{2}}{\frac{\mu_0}{2}}{p} \right)\end{equation} where $\mu_0$ is any integer satisfying $\mu_0^2 \equiv -m \pmod{4p}$; if no such $\mu_0$ exists then $a(m)=0$.
\end{enumerate}
\end{lemma}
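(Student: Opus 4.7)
The proof splits along the two claims. Part (i) is, under precisely the stated hypotheses on $F$, $k$, $N$, a restatement of the non-vanishing theorem proved in \cite{saha-2013} for $N=1$ and extended to squarefree $N$ in \cite{sahaschmidt}, which shows that $a(F,S) \neq 0$ for infinitely many fundamental $S$ of odd squarefree discriminant. I would invoke this as a black box, so the real work lies in part (ii).

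The first step for part (ii) is to show $\phi_p \neq 0$. Since $p \in \P(S_0)$, there exist $x,y \in \Z$ with $ax^2 + bxy + cy^2 = p$, and $\gcd(x,y)=1$ (else $p$ would have a proper square factor). Extending $(x,y)$ to a matrix $A = \bigl(\begin{smallmatrix} \ast & x \\ \ast & y \end{smallmatrix}\bigr) \in \SL_2(\Z)$, a direct computation shows that $\T{A} S_0 A$ has $(2,2)$-entry equal to $p$. By \eqref{fourierinvariance} its Fourier coefficient equals $a(F, S_0) \neq 0$, and this entry appears in the expansion \eqref{jacobifourier} of $\phi_p$, so $\phi_p \neq 0$ in $J_{k,p}^{\mathrm{cusp}}(N)$.

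Next, I would apply the Eichler--Zagier correspondence in the level-aware form established by Manickam--Ramakrishnan \cite{manickram}: for prime index $p$ coprime to a squarefree level $N$, the theta decomposition of any $\psi \in J_{k,p}^{\mathrm{cusp}}(N)$ repackages into a cusp form in the Kohnen plus space $S^+_{k-\frac12}(4pN)$, whose Fourier coefficient at $m$ is (up to the standard normalization) the sum over $\mu \in \{0,\ldots, 2p-1\}$ with $\mu^2 \equiv -m \pmod{4p}$ of the Fourier coefficient of $\psi$ at $\bigl((m+\mu^2)/(4p),\, \mu\bigr)$. Applied to $\phi_p$ this yields exactly the formula for $a(m)$ in the statement, and $h_p \neq 0$ because $\phi_p \neq 0$. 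The plus-space membership is then automatic: $a(m) \neq 0$ forces $\mu^2 + m \equiv 0 \pmod 4$ to be soluble, hence $m \equiv 0, 3 \pmod 4$, which (for $k$ even) is exactly the complement of the classes $m \equiv 1, 2 \pmod 4$ excluded in the definition of $S^+_{k-\frac12}(4pN)$.

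Finally, for the identity \eqref{e:connection}, the $\SL_2(\Z)$-invariance of Fourier coefficients is the key input. When $\gcd(m, 4p)=1$, a Chinese Remainder Theorem argument shows that $\mu^2 \equiv -m \pmod{4p}$ has either $0$ or exactly $2$ solutions in $\{0, \ldots, 2p-1\}$, namely $\mu_0$ and $2p - \mu_0$ for some $\mu_0$. Writing $S_{m, \mu} = \bigl(\begin{smallmatrix} (m+\mu^2)/(4p) & \mu/2 \\ \mu/2 & p \end{smallmatrix}\bigr)$, a short matrix calculation gives
\[
\T{A}\, S_{m, -\mu_0}\, A \;=\; S_{m,\, 2p - \mu_0},\qquad A = \mat{1}{0}{1}{1} \in \SL_2(\Z),
\]
while the symmetry $\phi_p(\tau, -z) = \phi_p(\tau, z)$ (valid for even $k$) gives $a(F, S_{m, -\mu_0}) = a(F, S_{m, \mu_0})$. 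Chaining these two identifications yields $a(F, S_{m, \mu_0}) = a(F, S_{m, 2p-\mu_0})$, so both terms in $a(m)$ contribute equally and \eqref{e:connection} follows. The main external input one must accept is the Manickam--Ramakrishnan correspondence; the remaining steps are elementary linear algebra and bookkeeping.
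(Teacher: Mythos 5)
Your route is essentially the paper's: part (i) is quoted from \cite{sahaschmidt} (the paper invokes Theorem 2.2 there), membership $h_p \in S_{k-\frac12}(4pN)$ comes from Manickam--Ramakrishnan applied to $\phi_p \in J_{k,p}^{\mathrm{cusp}}(N)$, the plus-space condition follows from solvability of $\mu^2 \equiv -m \pmod 4$, and \eqref{e:connection} follows from the equivalence $S_{m,\mu_0} \sim S_{m,2p-\mu_0}$ together with \eqref{fourierinvariance}; your two-step matrix computation (the shear $\mat{1}{0}{1}{1}$ composed with the sign flip in $z$) is correct and is the same identity the paper realizes with the single determinant $-1$ matrix $\mat{1}{-1}{0}{-1}$, harmless because $k$ is even.

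The one step that is not justified as written is the inference ``$h_p \neq 0$ because $\phi_p \neq 0$.'' The passage $\phi_p \mapsto h_p$ sums Jacobi Fourier coefficients over the residues $\mu$ with $\mu^2 \equiv -m \pmod{4p}$, and a priori such a summation could kill a nonzero form; injectivity is not contained in the statement you quote (the paper cites \cite{manickram} only for membership in $S_{k-\frac12}(4pN)$, and pointedly does not rely on injectivity). The claim is in fact true for even $k$ --- the elliptic transformation law makes $c_{\phi_p}(n,r)$ depend only on $r^2-4np$ and on $r \bmod 2p$, and $-I \in \Gamma_0(N)$ gives $c_{\phi_p}(n,-r)=c_{\phi_p}(n,r)$, so every coefficient of $\phi_p$ equals $a(m)$ or $a(m)/2$ for some $m$ --- but you neither prove nor cite this. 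The gap closes immediately with material already in your write-up, and this is precisely the paper's argument: the nonzero coefficient you exhibit is $a(F,\T{A}S_0A)$ where $\T{A}S_0A = \mat{\ast}{\mu_0/2}{\mu_0/2}{p}$ has discriminant $d_0$, and since $\gcd(-d_0,4p)=1$ (as $d_0$ is odd and $p\nmid d_0$), the identity \eqref{e:connection}, which you establish independently, yields $a(-d_0) = 2\,a(F,\T{A}S_0A) \neq 0$ directly, with no appeal to injectivity of the Jacobi-to-half-integral-weight map.
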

\begin{proof}
The claim that there exists $S_0 = \mat{a}{b/2}{b/2}{c} \in \Lambda_2$ such that $a(F, S_0)\neq 0$ and $d_0 = b^2-4ac$ is odd and squarefree follows from Theorem 2.2 of \cite{sahaschmidt}.

Now let $p \in \P(S_0)$, $p \nmid 2Nd_0$. The fact that  $h_p \in S_{k-\frac{1}{2}}(4pN)$ follows from $\phi_{p} \in J_{k,p}^{\text{cusp}}(N) $ and Theorem 4.8 of~\cite{manickram}; by definition $h_p$ belongs to the Kohnen plus space.
Next, we prove  \eqref{e:connection}. Let $\gcd(m, 4p)=1$. Observe that the sum $\sum_{\substack{0 \le \mu \le 2p-1 \\ \mu^2 \equiv -m \pmod{4p}}}$ is non-empty if and only if $-m$ is a square modulo $4p$ in which case the sum has exactly two terms. Indeed, we get that  \begin{equation}\label{e:conn2}\tst a(m) = a\left(F, \mat{\frac{m+\mu_0^2}{4p}}{\frac{\mu_0}{2}}{\frac{\mu_0}{2}}{p}\right) + a\left(F, \mat{\frac{m+\mu_1^2}{4p}}{\frac{\mu_1}{2}}{\frac{\mu_1}{2}}{p}\right)\end{equation} where $0 \le \mu_0 \le 2p-1$ satisfies $\mu_0^2 \equiv -m \pmod{4p}$ and $\mu_1 = 2p-\mu_0$. Using \eqref{fourierinvariance}, \eqref{e:conn2}, and the identity $\mat{1}{-1}{0}{-1} \mat{\frac{m+\mu_0^2}{4p}}{\frac{\mu_0}{2}}{\frac{\mu_0}{2}}{p}\mat{1}{0}{-1}{-1} = \mat{\frac{m+\mu_1^2}{4p}}{\frac{\mu_1}{2}}{\frac{\mu_1}{2}}{p}$, we obtain \eqref{e:connection}.

 It remains to show $h_p \neq 0$ for which we will show that $a(-d_0)\neq 0$. Let $x_0$, $y_0$ be integers such that $cx_0^2 + bx_0y_0 + ay_0^2=p$.  Since $\gcd(x_0, y_0) = 1$, we may pick integers $x_1$, $y_1$ such that $A= \mat{y_1}{y_0}{x_1}{x_0} \in \SL_2(\Z).$ Then $S' = \T{A}SA$ is $\SL_2(\Z)$-equivalent to $S_0$ and has the form $S'=\mat{\frac{-d_0+\mu_0^2}{4p}}{\frac{\mu_0}{2}}{\frac{\mu_0}{2}}{p} \in \Lambda_2$. By \eqref{fourierinvariance},  $a(F,S') \neq 0$ since $a(F, S_0) \neq 0$. Hence by \eqref{e:connection},  $a(-d_0) = 2 a(F,S') \neq 0$.
\end{proof}

\subsection{The proofs of Theorems A and B}
We are now ready to prove Theorems A and B of the introduction in a slightly stronger form.
\begin{theorem}\label{thm:signssiegel}
Let $F \in S_k(\Gamma^{(2)}_0(N))$ with $k > 2$ even and $N$ odd and squarefree. Assume that $F$ is an eigenfunction of the $U(p)$ operator for each prime $p|N$, and that $F$ has real Fourier coefficients. Then there exists  a set $\P$ of primes satisfying $|\{p \in \P: p \le X\}| \gg_F  \frac{X}{\log X}$  such that given $\eps>0$ and $p \in \P$, there exist $M\ge 2$ \up{depending only on $F$ and $p$} and  $X_0\ge 1$ \up{depending on $F$, $p$ and $\eps$} so that for all $X \ge X_0$,
 there  are $r_X \ge X^{1-\eps}$ matrices $S_1$, $S_2, \ldots, S_{r_{X}} \in \Lambda_2$ satisfying the following:
\begin{enumerate}
\item $X \le |\disc(S_1)| < |\disc(S_2)| < \ldots < |\disc(S_{r_X})| \le MX$,
\item For each $1\le i \le r_X$, $S_i = \mat{*}{*}{\ast}{p}$, and $\disc(S_i)$ is a  squarefree integer coprime to $2N$,
\item For each $1\le i \le r_X-1$, $a(F, S_{i})a(F, S_{i+1})<0$.
\end{enumerate}
\end{theorem}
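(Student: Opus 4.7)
The plan is to reduce Theorem \ref{thm:signssiegel} to Theorem \ref{thm:signs} via the Fourier--Jacobi construction encoded in Lemma \ref{l:yamana}. First I would invoke Lemma \ref{l:yamana}(i) to fix a matrix $S_0 \in \Lambda_2$ with $a(F,S_0) \neq 0$ and $d_0 = \disc(S_0)$ odd and squarefree. I then define $\P$ to be the set of primes $p \in \P(S_0)$ with $p \nmid 2Nd_0$; removing finitely many primes from $\P(S_0)$ leaves the density estimate $|\{p \in \P : p \le X\}| \gg_F X/\log X$ intact by \eqref{e:iwaniec}.

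Next, fix $p \in \P$ and apply Lemma \ref{l:yamana}(ii) to produce a nonzero half-integral weight form $h_p \in S^+_{k-\frac12}(4pN)$; since $F$ has real Fourier coefficients, so does $h_p$. Because $pN$ is odd and squarefree, Theorem \ref{thm:signs} applies (with weight $k-1$ in the role of $k$ there, and level $pN$): for a suitable $M \ge 2$ depending on $F$ and $p$, and for any $\eps > 0$ and sufficiently large $X$, the sequence $\{c(h_p,n)\}$ as $n$ ranges over integers in $[X,MX]$ with $2n$ squarefree and $(n,pN)=1$ exhibits at least $X^{1-\eps}$ sign changes. Enumerate the corresponding consecutive $n$'s as $n_1 < n_2 < \cdots < n_{r_X}$ with $r_X \gg X^{1-\eps}$ and $c(h_p,n_i) c(h_p, n_{i+1}) < 0$.

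Each $n_i$ appearing above is odd, squarefree, coprime to $pN$, and --- since $c(h_p,n_i)\neq 0$ and $h_p$ lies in the Kohnen plus space with $k-1$ odd --- satisfies $n_i \equiv 3 \pmod 4$. Consequently $-n_i \equiv 1 \pmod 4$ is odd and squarefree, hence a fundamental discriminant. Since $(n_i, 4p) = 1$, identity \eqref{e:connection} of Lemma \ref{l:yamana}(ii) yields
\[
a(h_p, n_i) = 2\, a(F, S_{n_i}), \qquad S_{n_i} = \mat{\tfrac{n_i + \mu_0^2}{4p}}{\tfrac{\mu_0}{2}}{\tfrac{\mu_0}{2}}{p},
\]
for an appropriate $\mu_0 = \mu_0(n_i)$. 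A direct determinant computation gives $\disc(S_{n_i}) = -n_i$, so $S_{n_i}$ is fundamental with $|\disc(S_{n_i})| = n_i \in [X, MX]$ coprime to $2N$, and the last row/column entry $p$ is as required. The proportionality constant $2$ in the displayed identity preserves signs, so $a(F, S_{n_i}) a(F, S_{n_{i+1}}) < 0$ for all $i$, and the $S_{n_i}$ are pairwise inequivalent since their discriminants are distinct.

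There is no genuine obstacle here beyond bookkeeping: the work is located entirely in Theorem \ref{thm:signs} and Lemma \ref{l:yamana}. The only subtlety worth double-checking is the parity alignment --- that the Kohnen plus space condition for $h_p$ at weight $k-\frac12$ forces $n_i \equiv 3 \pmod 4$ exactly so that $-n_i$ is a fundamental discriminant --- and that Theorem \ref{thm:signs} is applied with the correct level $pN$ rather than $N$, so the constant $M$ depends on $p$ as well as on $F$.
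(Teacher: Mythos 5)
Your proposal is correct and follows essentially the same route as the paper: fix $S_0$ via Lemma \ref{l:yamana}(i), take $\P\subseteq\P(S_0)$ (the paper takes $\P=\P(S_0)$; your removal of the finitely many primes dividing $2Nd_0$ is the same in substance), construct $h_p$ via Lemma \ref{l:yamana}(ii), apply Theorem \ref{thm:signs} at level $4pN$, and transfer sign changes back through \eqref{e:connection}. Your extra parity check that $-n_i\equiv 1\pmod 4$ is a harmless (correct) addition not needed for the statement, since \eqref{e:connection} already forces the existence of the matrix $S_i$ with $\disc(S_i)=-n_i$ whenever $a(h_p,n_i)\neq 0$.
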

\begin{proof}Using Lemma \ref{l:yamana}, we fix a fundamental matrix $S_0$ such that $a(F,S_0) \neq 0$. Take $\P = \P(S_0)$; then the estimate $|\{p \in \P: p \le X\}| \gg  \frac{X}{\log X}$ follows from \eqref{e:iwaniec}. Given any $p \in \P$, let $f=h_{p}$ be as in Lemma \ref{l:yamana}, so that $0 \neq f \in S^+_{k-\frac{1}{2}}(4pN)$ and the coefficients of $f$ are all real since the coefficients of $F$ are.

By Theorem \ref{thm:signs}, there exists  $M\ge 2$ such that for any $\eps>0$, the sequence $\{ a(f,n)\mu^2(n) \}_{\substack{X \leq n \leq MX \\ (n,2N)=1}}$ has  $\ge C_{f,M,\varepsilon} X^{1-\varepsilon/2}$ sign changes for some constant $C_{f,M,\varepsilon}$. Pick $X_0 \ge 1$ so that for all $X \ge X_0$ we have $X^{\eps/2} \ge \frac{1}{C_{f,M,\varepsilon}}$. Then for all  $X \ge X_0$,
 there exists $r_X \ge X^{1-\eps}$, and an increasing sequence $(n_i)_{1 \le i \le r_X}$ of odd squarefree integers satisfying
$a(f,n_i) a(f,n_{i+1}) < 0$.
 For each $n_i$ as above,   \eqref{e:connection} tells us that  $a(f,n_i) = 2 a(F,S_i)$ for some $S_i = \mat{*}{*}{\ast}{p} \in \Lambda_2$  with  $|\disc(S_i)| = n_i$. This completes the proof.
\end{proof}
\begin{theorem}\label{thm:large}
Let $F \in S_k(\Gamma^{(2)}_0(N))$ with $k >2$ even and $N$ odd and squarefree. Assume that $F$ is an eigenfunction of the $U(p)$ operator for each prime $p|N$. Then there exists  a set $\P$ of primes satisfying $|\{p \in \P: p \le X\}| \gg_F  \frac{X}{\log X}$  such that given $\eps>0$ and $p \in \P$, one can find $X_0\ge 1$ (depending on $F$, $p$ and $\eps$) so that for all $X \ge X_0$,
 there  are $r_X \ge X^{1-\eps}$ matrices $S_1$, $S_2, \ldots, S_{r_{X}} \in \Lambda_2$ satisfying the following:
\begin{enumerate}
\item For each $1\le i \le r_X$, $S_i = \mat{*}{*}{\ast}{p}$, and $\disc(S_i)$ is a  squarefree integer coprime to $2N$,
\item $X \le |\disc(S_1)| < |\disc(S_2)| < \ldots < |\disc(S_{r_X})| \le 2X$,
\item For each $1\le i \le r_X$, $|a(F,S_i)| \ge |\disc(S_i)|^{\frac{k}2 - \frac34} \exp\left(\frac{1}{82} \sqrt{\frac{\log |\disc(S_i)|}{\log \log |\disc(S_i)|}} \right).$
\end{enumerate}
\end{theorem}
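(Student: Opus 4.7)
The plan is to mirror the reduction used in the proof of Theorem \ref{thm:signssiegel}, substituting Theorem \ref{thm:large2} for Theorem \ref{thm:signs}. First, Lemma \ref{l:yamana}(i) yields a fundamental matrix $S_0 \in \Lambda_2$ with $a(F, S_0) \neq 0$, of discriminant $d_0$; set $\P = \P(S_0) \setminus \{p : p \mid 2Nd_0\}$. The density bound $|\{p \in \P : p \le X\}| \gg_F X/\log X$ then follows from \eqref{e:iwaniec}. For each prime $p \in \P$, Lemma \ref{l:yamana}(ii) produces a nonzero half-integral weight cusp form $h_p \in S^+_{(k-1)+\frac{1}{2}}(4pN)$ together with the identity \eqref{e:connection} expressing the Fourier coefficients of $h_p$ in terms of those of $F$.

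Next, I would apply Theorem \ref{thm:large2} to $h_p$ with the parameters $(k-1, pN)$ in place of $(k, N)$; the hypotheses are satisfied since $k - 1 \ge 3$ (as $k > 2$ is even) and $pN$ is odd and squarefree. This produces at least $X^{1-\varepsilon}$ odd squarefree integers $n \in [X, 2X]$ with $(n, pN) = 1$ satisfying
\[
|c(h_p, n)| \ge \exp\!\left(\tfrac{1}{82}\sqrt{\log n / \log\log n}\right).
\]
Unravelling the normalization $c(h_p, n) = a(h_p, n)\, n^{3/4 - k/2}$, this becomes $|a(h_p, n)| \ge n^{k/2 - 3/4}\exp(\tfrac{1}{82}\sqrt{\log n/\log\log n})$. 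Any such $n$ automatically satisfies $n \equiv 3 \pmod{4}$, by the Kohnen plus-space condition applied to $h_p \in S^+_{(k-1)+\frac{1}{2}}(4pN)$ together with the evenness of $k$; consequently $-n$ is a fundamental discriminant, so the matrix $S_n$ produced below will automatically be fundamental.

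For each such $n$, the identity \eqref{e:connection} furnishes an explicit matrix $S_n = \mat{(n+\mu_0^2)/(4p)}{\mu_0/2}{\mu_0/2}{p} \in \Lambda_2$ of the required shape, with $|\disc(S_n)| = n$ and $a(h_p, n) = 2 a(F, S_n)$. Hence $|a(F, S_n)| \ge \tfrac{1}{2} n^{k/2 - 3/4}\exp(\tfrac{1}{82}\sqrt{\log n / \log\log n})$. The residual factor $\tfrac{1}{2}$ is absorbed by a closer look at the proof of Theorem \ref{thm:large2}: the bound there is in fact derived from an intermediate estimate of shape $\tfrac{A}{\sqrt{2}}\exp(\tfrac{1}{80}\sqrt{\log n/\log\log n})$, which dominates $\exp(\tfrac{1}{82}\sqrt{\log n/\log\log n})$ by a factor growing without bound, leaving ample slack to swallow any constant factor provided $X$ is sufficiently large. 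No genuinely new obstacle arises in this reduction; the essential difficulty was concentrated in the proof of Theorem \ref{thm:large2}, and the rest is parallel to Theorem \ref{thm:signssiegel}.
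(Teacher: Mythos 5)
Your proposal is correct and follows essentially the same route as the paper, which simply runs the proof of Theorem \ref{thm:signssiegel} with Theorem \ref{thm:large2} applied to $f=h_p$ in place of Theorem \ref{thm:signs}. Your extra care with the weight shift to $(k-1)+\tfrac12$, the level $4pN$, and absorbing the factor $\tfrac12$ from \eqref{e:connection} into the slack between the exponents $\tfrac{1}{80}$ and $\tfrac{1}{82}$ is exactly the right way to make the paper's one-line reduction precise.
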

\begin{proof}The proof is essentially identical to Theorem \ref{thm:signssiegel}, except that we use Theorem \ref{thm:large2} (rather than Theorem \ref{thm:signs}) on $f = h_p$.
\end{proof}
\subsection{$L$-functions of Siegel cusp forms}\label{s:lfunctionssiegel}
For the rest of this section we assume that $k >2$.

 Given an irreducible cuspidal automorphic representation $\pi$ of $\GSp_4(\A)$, we let $L(s, \pi)$ denote the associated degree 4 $L$-function (known as the ``spin" $L$-function). Furthermore, we let $L(s, \std(\pi))$ denote the associated degree 5 $L$-function (the ``standard" $L$-function) and $L(s, \ad(\pi))$ denote the associated degree 10 $L$-function (the ``adjoint" $L$-function). Each of these $L$-functions is defined as an Euler product with the local $L$-factor at each prime (including the ramified  primes) constructed via the associated representation of the dual group $\GSp_4(\C)$ using the local Langlands correspondence (which is known for $\GSp_4$ by the work of Gan--Takeda \cite{gantakGSp4}). More precisely, for $n=4,5,10$, let $\rho_n$ denote the irreducible $n$-dimensional representation of $\GSp_4(\C)$ given as follows: $\rho_4$ is the inclusion of $\GSp_4(\C) \hookrightarrow \GL_4(\C)$, $\rho_5$ is the map defined in \cite[A.7]{NF}, and $\rho_{10}$ is the adjoint representation of $\GSp_4(\C)$ on the Lie algebra of $\Sp_4(\C)$. Then $L(s, \pi)$,  $L(s, \std(\pi))$, and $L(s, \ad(\pi))$ correspond to the representations $\rho_4$, $\rho_5$ and $\rho_{10}$ respectively.


 We say that an element $F$ of $S_k(\Gamma^{(2)}_0(N))$ \emph{gives rise to an irreducible representation} if its adelization (in the sense of \cite[\S3]{sahapet}) generates an irreducible cuspidal automorphic representation of $\GSp_4(\A)$. The automorphic representation  associated to any such $F$ is of trivial central character and hence may be viewed as an automorphic representation of $\PGSp_4(\A) \simeq \SO_5(\A)$. It can be checked  \cite[Prop. 3.11]{sahapet} that if $F$ gives rise to an irreducible representation then $F$ is an eigenform of the local Hecke algebras at all primes not dividing $N$. In addition, we say that such an $F$ is factorizable if its adelization corresponds to a factorizable vector in the  representation generated.

    We say that an irreducible cuspidal automorphic representation $\pi$ of $\GSp_4(\A)$ \emph{arises from $S_k(\Gamma^{(2)}_0(N))$ } if there exists $F \in S_k(\Gamma^{(2)}_0(N))$ whose adelization generates $V_\pi$ 
    (in which case, by definition, $F$ gives rise to the irreducible representation $\pi$, which therefore must be of trivial central character by the comments above).  We say that an irreducible cuspidal automorphic representation $\pi$ of $\GSp_4(\A)$ is CAP if it is nearly equivalent 
 to a constituent of a global induced representation of a proper parabolic subgroup of $\GSp_4(\A)$. If a CAP $\pi$ arises from $S_k(\Gamma^{(2)}_0(N))$, then by Corollary 4.5 of \cite{pitschram} it is associated to the Siegel parabolic (i.e., it is of \emph{Saito-Kurokawa type}). Such a $\pi$ is non-tempered at almost all primes (in particular, it violates the Ramanujan conjecture). On the other hand, if $\pi$ arises from $S_k(\Gamma^{(2)}_0(N))$ and is \emph{not} CAP, then $\pi$ satisfies the Ramanujan conjecture by a famous result of Weissauer \cite[Thm. 3.3]{weissram}.

  Thus, the space $S_k(\Gamma^{(2)}_0(N))$ has a natural decomposition into orthogonal subspaces $$S_k(\Gamma^{(2)}_0(N))=S_k(\Gamma^{(2)}_0(N))^{\rm  CAP}  \oplus S_k(\Gamma^{(2)}_0(N))^{ \rm T}$$ where $S_k(\Gamma^{(2)}_0(N))^{\rm  CAP}$ is spanned by forms $F$ which give rise to irreducible representations of Saito-Kurokawa type, and $S_k(\Gamma^{(2)}_0(N))^{ \rm T}$ is its orthogonal complement, spanned by forms $F$ which give rise to irreducible representations that are not of Saito-Kurokawa type. Furthermore, one can get a basis of each of the spaces $S_k(\Gamma^{(2)}_0(N))^{\rm  CAP}$ and $S_k(\Gamma^{(2)}_0(N))^{ \rm T}$ in terms of factorizable forms.  We refer the reader to  \cite[\S 3.1 and \S3.2]{DPSS15} for further comments related to the above discussion.

 If $\pi$ arises from $S_k(\Gamma^{(2)}_0(N))$  and is of Saito-Kurokawa type, then there exists a representation $\pi_0$ of $\GL_2(\A)$ and a Dirichlet character $\chi_0$ satisfying $\chi_0^2=1$ such that $L^N(s, \pi) = L^N(s, \pi_0) L^N(s+1/2, \chi_0)  L^N(s-1/2, \chi_0)$. Additionally, if $N$ is squarefree, then only $\chi_0=1$ is possible by a well-known result of Borel \cite{Borel} and so in this case we have $L^N(s, \pi) = L^N(s, \pi_0) \zeta^N(s+1/2)  \zeta^N(s-1/2)$.  There exists another typical situation where the $L$-function factors: we say that a $\pi$ arising from $S_k(\Gamma^{(2)}_0(N))$ is of \emph{Yoshida type} if there are representations $\pi_1$ and $\pi_2$ of $\GL_2(\A)$ such that  $L(s, \pi) = L(s, \pi_1) L(s, \pi_2)$; in this case one has (after possibly swapping $\pi_1$ and $\pi_2$) that $\pi_1$ arises from a classical holomorphic newform of weight 2 and $\pi_2$ arises from a classical holomorphic newform of weight $2k-2$ (see \cite[Sec. 4]{sahapet} for more details).  We let $S_k(\Gamma^{(2)}_0(N))^{ \rm Y}$ denote the subspace of $S_k(\Gamma^{(2)}_0(N))^{ \rm T}$ spanned by forms which give rise to an irreducible representation of Yoshida type, and let $S_k(\Gamma^{(2)}_0(N))^{ \rm G}$,  which represents the \emph{general type}, denote the orthogonal complement of $S_k(\Gamma^{(2)}_0(N))^{ \rm Y}$ in $S_k(\Gamma^{(2)}_0(N))^{ \rm T}$. So we get the following key orthogonal  decomposition into subspaces \begin{equation}\label{e:decompositionpackets}S_k(\Gamma^{(2)}_0(N))=S_k(\Gamma^{(2)}_0(N))^{\rm  CAP}  \oplus S_k(\Gamma^{(2)}_0(N))^{ \rm Y} \oplus S_k(\Gamma^{(2)}_0(N))^{ \rm G}.\end{equation} In the notation of \cite{schmidtpacket}, the three subspaces on the right side above correspond to the \emph{global Arthur packets} of type \textbf{(P)}, \textbf{(Y)} and \textbf{(G)} respectively. In the sequel we will be mostly concerned with the space $S_k(\Gamma^{(2)}_0(N))^{ \rm G}$ because the other two spaces are easier to handle. The following proposition collects together some relevant facts about the associated $L$-functions that follow from the work of Arthur \cite{Arthur2013}.

\begin{proposition}\label{p:lfunctionsfacts}Suppose that $F \in S_k(\Gamma^{(2)}_0(N))^{ \rm G}$ gives rise to an irreducible representation $\pi$.
\begin{enumerate}
\item\label{ass1} The representation $\pi$ has a strong functorial lifting to an irreducible \emph{cuspidal} automorphic representation $\Pi_4$ of $\GL_4(\A)$. In particular, if $\sigma$ is any irreducible automorphic representation of $\GL_n(\A)$, then we have an equality of degree $4n$ Rankin-Selberg $L$-functions $L(s, \pi \times \sigma)= L(s, \Pi_4 \times \sigma)$ and therefore $L(s, \pi \times \sigma)$ satisfies the usual properties\footnote{By this we mean that this $L$-function has meromorphic continuation to the entire complex plane, satisfies the standard functional equation taking $s \mapsto 1-s$, has an Euler product, and is bounded in vertical strips (in particular, the $L$-function is in the extended Selberg class).}  of an $L$-function. If $n \le 3$,  then $L(s, \pi \times \sigma)$ is entire.

\item   The representation $\pi$ has a strong functorial lifting to an irreducible automorphic representation $\Pi_5$ of $\GL_5(\A)$. In particular, if $\sigma$ is any irreducible automorphic representation of $\GL_n(\A)$, then we have an equality of degree $5n$ Rankin-Selberg $L$-functions $L(s, \std(\pi) \times \sigma)=  L(s, \Pi_5 \times \sigma)$ and therefore $L(s, \std(\pi) \times \sigma)$ satisfies the usual properties  of an $L$-function. If $n \le 2$, and $\sigma$ has the property that the set of finite primes where it is ramified is either empty or contains at least one prime not dividing $N$, then $L(s, \std(\pi) \times \sigma)$ is entire.

\item   The degree 10 $L$-function      $L(s, \ad(\pi))$ satisfies the usual properties  of an $L$-function, is entire, and has no zeroes on the line $\Re(s)=1$.

\end{enumerate}
\end{proposition}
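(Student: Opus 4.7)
The plan is to derive all three parts from Arthur's endoscopic classification for $\SO_5(\A)\simeq\PGSp_4(\A)$, combined with the local Langlands correspondence for $\GSp_4$ of Gan--Takeda, together with the standard analytic theory of Rankin--Selberg $L$-functions on $\GL_n\times\GL_m$ (Jacquet--Piatetski-Shapiro--Shalika, with boundedness in vertical strips supplied by Gelbart--Shahidi). The starting point is that because $F$ lies in the general-type subspace $S_k(\Gamma^{(2)}_0(N))^{\rm G}$, the packet structure recalled just before the proposition (from Arthur 2013, together with Schmidt's study of Arthur packets for $\GSp_4$) shows that $\pi$ (which has trivial central character and so descends to $\SO_5(\A)$) has global Arthur parameter of type \textbf{(G)}: a single self-dual cuspidal automorphic representation $\Pi_4$ of $\GL_4(\A)$ of symplectic type with trivial central character, and the lift $\pi\mapsto\Pi_4$ is strong in the sense that the local components agree via local Langlands at every finite place, including those dividing $N$.

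For part (i), the strong lift gives $L(s,\pi\times\sigma)=L(s,\Pi_4\times\sigma)$, and the standard analytic properties (meromorphic continuation, functional equation, Euler product, boundedness in vertical strips) are then those of the $\GL_4\times\GL_n$ Rankin--Selberg $L$-function. For $n\le 3$ with $\sigma$ cuspidal, a pole would force $\sigma\simeq\hat{\Pi}_4$ and hence $n=4$; the non-cuspidal case reduces to the cuspidal one via the isobaric decomposition of $\sigma$.

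For part (ii), Arthur's theory supplies a strong functorial lift $\Pi_5$ of $\std(\pi)$ to $\GL_5(\A)$ (matching the embedding $\SO_5(\C)\hookrightarrow\GL_5(\C)$), so that $L(s,\std(\pi)\times\sigma)=L(s,\Pi_5\times\sigma)$. Concretely, the branching $\wedge^2\C^4\simeq\mathbf{1}\oplus\C^5$ for $\Sp_4(\C)\subset\GL_4(\C)$ yields the isobaric decomposition $\wedge^2\Pi_4=\mathbf{1}\boxplus\Pi_5$. In particular $\Pi_5$ is unramified at every prime $p\nmid N$, and for $n\le 2$ the stated ramification hypothesis on $\sigma$ precludes any cuspidal constituent of $\Pi_5$ from being isomorphic to $\hat{\sigma}$ up to an unramified twist; Jacquet--PSS then yields entireness of $L(s,\Pi_5\times\sigma)$.

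For part (iii), the adjoint action of $\GSp_4(\C)$ on its Lie algebra $\mathfrak{sp}_4$ factors through $\PGSp_4(\C)\simeq\SO_5(\C)$ acting on $\mathfrak{so}_5\simeq\wedge^2\C^5$, hence $L(s,\ad(\pi))=L(s,\Pi_5,\wedge^2)$. The standard properties, entireness, and non-vanishing on $\Re(s)=1$ follow from Shahidi's Langlands--Shahidi method applied to a suitable Eisenstein series (alternatively, from the Rankin--Selberg integral representation combined with the pole analysis of exterior square $L$-functions). The main obstacle is not analytic but bookkeeping: one must carefully assemble Arthur's classification, Gan--Takeda's local Langlands, and their ramified refinements to guarantee the strong lifts $\Pi_4,\Pi_5$ with full local compatibility at every prime dividing $N$, and to verify that this matches the decomposition of $S_k(\Gamma^{(2)}_0(N))$ into $\textbf{(P)},\textbf{(Y)},\textbf{(G)}$ pieces described in the excerpt.
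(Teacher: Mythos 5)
Your overall architecture (Arthur's classification giving a strong cuspidal lift $\Pi_4$, the exterior square producing the degree $5$ object, and $\Sym^2\Pi_4$ handling the adjoint) matches the paper's proof, but two points are off, one of them a genuine gap. First, a smaller issue: Arthur's classification for $\SO_5$ only supplies the degree $4$ lift $\Pi_4$ (the dual group of $\SO_5$ is $\Sp_4(\C)\subset\GL_4(\C)$); it does not directly hand you a $\GL_5$ lift attached to $\rho_5$. The decomposition $\wedge^2\Pi_4=\mathbf{1}\boxplus\Pi_5$ is not a consequence of the dual-group branching alone: one needs Kim's automorphy of the exterior square on $\GL_4$ (with Henniart's local compatibility) to realize $L(s,\Pi_4,\wedge^2)$ as the $L$-function of an isobaric representation ${\rm Ind}(\tau_1\otimes\cdots\otimes\tau_m)$ of $\GL_6(\A)$, and then the fact that $\Pi_4$ is symplectic and cuspidal (so $L(s,\Pi_4,\wedge^2)$ has a \emph{simple} pole at $s=1$ while $L(s,\Pi_4,\Sym^2)$ is entire) to conclude that exactly one $\tau_i$ is the trivial character, which is then cancelled against $\zeta(s)$ to define $\Pi_5$. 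Without this pole bookkeeping you do not even know that $L(s,\std(\pi))$ itself (the case $\sigma=\mathbf{1}$) is entire.

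The more serious gap is your treatment of entireness of $L(s,\std(\pi)\times\sigma)$ for $n\le 2$: you assert that the ramification hypothesis on $\sigma$ "precludes any cuspidal constituent of $\Pi_5$ from being isomorphic to $\hat{\sigma}$." It does not. Since each $\tau_i$ is unramified outside $N$, the hypothesis only reduces you to the case where $\tau_i\simeq\hat{\sigma}$ is unramified at \emph{every} finite place (note the hypothesis explicitly allows $\sigma$ unramified everywhere), and nothing you have said rules out such a constituent of $\Pi_5$. Excluding it is the real content of the paper's argument: for $n=1$ an everywhere-unramified $\tau_i$ would force a pole of $L(s,\std(\pi))=\prod_i L(s,\tau_i)$ on $\Re(s)=1$, contradicting the holomorphy and non-vanishing there obtained from the symplectic type of $\Pi_4$ together with Jacquet--Shalika/Brauer-type input; for $n=2$ one uses $\wedge^2\rho_5=\Sym^2\rho_4=\rho_{10}$, so $L(s,\Pi_5,\wedge^2)=L(s,\ad(\pi))=L(s,\Pi_4,\Sym^2)$ is entire, whereas an everywhere-unramified two-dimensional constituent $\tau_i$ would contribute a factor $L(s,\omega_{\tau_i})$ to $L(s,\Pi_5,\wedge^2)$ and hence a pole on $\Re(s)=1$. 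These two contradiction arguments are missing from your proposal, and without them the $n\le 2$ entireness claim (which is exactly what is needed later, e.g.\ in Lemma \ref{l:basechange} and Remark \ref{r:holofunctions}) is unproved. Relatedly, for the entireness of $L(s,\ad(\pi))$ in part (iii) the paper invokes Bump--Ginzburg's theorem that $L(s,\Sym^2\Pi_4)$ is entire for symplectic $\Pi_4$; the Langlands--Shahidi method alone gives continuation, functional equation and the non-vanishing on $\Re(s)=1$, but not the holomorphy statement you need.
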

 \begin{proof} Since $F$ corresponds to the ``general" Arthur parameter,  Arthur's work \cite{Arthur2013} (see also Section 1.1 of \cite{schmidtpacket}) shows that $\pi$ has a strong lifting to a cuspidal automorphic representation $\Pi_4$ of $\GL_4$; clearly $\Pi_4$ has trivial central character since $\pi$ does. It is known from \cite[Thm. 1.5.3]{Arthur2013} that $\Pi_4$  is self-dual and symplectic\footnote{Recall that a self-dual representation $\Pi$ of $\GL_n(\A)$ is said to be symplectic if $L(s, \Sym^2 \Pi)$ is holomorphic at $s=1$.}. The fact that the lifting is ``strong", i.e., corresponds to a local lift at \emph{all} places, implicitly uses that the local parameters of Arthur coincide with the local Langlands parameters of Gan--Takeda \cite{gantakGSp4}; this consistency of local parameters follows from \cite{GC15}.  The required properties of $L(s, \pi \times \sigma)$ now follow from Rankin--Selberg theory of $\GL_n$. 

 For the second assertion, let $\wedge^2$ denotes the exterior square, and note that $\wedge^2\rho_4=\mathbf{1}+\rho_5$, hence
\begin{equation}\label{exteriorsquaredeg5relationeq}
 L(s,\Pi_4,\wedge^2)=L(s, \std(\pi))\zeta(s).
\end{equation}
 Recall that $L(s, \Pi_4 \times  \Pi_4) = L(s,\Pi_4,\Sym^2) L(s,\Pi_4,\wedge^2)$. Since $\Pi_4$ is symplectic, $L(s,\Pi_4,\wedge^2)$ has a simple pole at $s=1$ and $L(s,\Pi_4,\Sym^2)$ is an entire function by \cite[Thm.\ 7.5]{BG}. It follows from \eqref{exteriorsquaredeg5relationeq} that $L(s, \std(\pi))$ is holomorphic and non-zero at $s=1$. Together with \cite[Theorem 2]{Gr}, we obtain that $L(s, \std(\pi))$ has no poles on ${\rm Re}(s)=1$. On the other hand, by \cite[Thm. A]{kimsar} and \cite{Hen09} $L(s,\Pi_4,\wedge^2)$ is the $L$-function of an automorphic representation of $\GL_6(\A)$ of the form
${\rm Ind}(\tau_1\otimes\ldots\otimes\tau_m)
$
where $\tau_1,\ldots,\tau_m$ are unitary, cuspidal, automorphic representations of $\GL_{n_i}(\A)$, $n_1+\ldots+n_m=6$. Since $L(s,\Pi_4,\wedge^2)$ has a simple pole at $s=1$, it follows that exactly one of the $\tau_i$, say $\tau_m$, is the trivial representation of $\GL_1(\A)$. Cancelling out one zeta factor, we see that
\begin{equation}\label{liftingGL5theoremeq2}
 L(s, \std(\pi))=L(s,\tau_1)\cdots L(s,\tau_{m-1}).
\end{equation}
So we take $\Pi_5={\rm Ind}(\tau_1\otimes\ldots\otimes\tau_{m-1})
$; the above discussion shows that $\Pi_5$ is a (strong) lifting of $\pi$ from $\GSp_4(\A)$ to $\GL_5(\A)$ with respect to the  map $\rho_5$ of dual groups $\GSp_4(\C) \rightarrow \SO_5(\C) \subset \GL_5(\C)$. Observe that each $\tau_i$ is unitary, cuspidal, and unramified outside primes dividing $N$.  By Rankin--Selberg theory, $L(s, \Pi_5 \times \sigma)=\prod_{i=1}^{m-1}L(s, \tau_i \times \sigma) $ satisfies the usual properties of an $L$-function. To complete the proof, it suffices to show that if $\sigma$ is an  irreducible cuspidal representation of $\GL_1(\A)$ or $\GL_2(\A)$ such that the set of ramification primes for $\sigma$ is either empty or contains at least one prime outside $N$, then  $\tau_i \not\simeq \hat{\sigma}$ for each $i$. First, consider the case that $\sigma$ is a character, in which case we are reduced to $n_i=1$.  In this case, since $\tau_i$ is unramified outside $N$, the assumption on $\sigma$ means that the situation  $\tau_i \simeq \hat{\sigma}$ will force $\tau_i$ to be unramified everywhere, in which case  the right hand side of (\ref{liftingGL5theoremeq2}) would have a pole on ${\rm Re}(s)=1$. This contradicts the observation from above that $L(s, \std(\pi))$ has no poles on ${\rm Re}(s)=1$. Next consider the case that $\sigma$ is a cuspidal representation of $\GL_2(\A)$. An identical argument to the above reduces us to the case where  $\tau_i \simeq \hat{\sigma}$ is unramified at all finite primes. The easy relation $\wedge^2\rho_5 = {\rm Sym}^2\rho_4=\rho_{10}$  implies that
\begin{equation}\label{liftingGL5theoremeq3}
 L(s,\Pi_5,\wedge^2)= L(s, \ad(\pi)) = L(s,\Pi_4,\Sym^2),
\end{equation}
which we know is an entire function. On the other hand, if some $\tau_i$ in \eqref{liftingGL5theoremeq2} has $n_i=2$ and is unramified at all finite primes, then $L(s,\Pi_5,\wedge^2)$ will contain a factor of $L(s, \omega_{\tau_i})$ and so will have a pole on $\Re(s)=1$. This contradiction completes the proof that each $L(s,\tau_i \times \sigma)$ and hence $L(s,\Pi_5\times \sigma)$  is entire.

Finally, the assertion concerning the degree 10 $L$-function $L(s, \ad(\pi))$ follows from the identity  \eqref{liftingGL5theoremeq3} and the fact that $L(s,\Pi_4,\Sym^2)$ represents a holomorphic $L$-function. Since symmetric square $L$-functions are accessible via the Langlands-Shahidi method, the non-vanishing on ${\rm Re}(s)=1$ follows (see Sect.\ 5 of \cite{Sh1981} and \cite[Thm 1.1]{shahidi97}).
 \end{proof}
For our future applications, we will also need that  $L(s, \ad(\pi) \times \sigma)$ has the properties of an $L$-function and is entire for certain special automorphic representations $\sigma$ of $\GL_1(\A)$ or $\GL_2(\A)$. The next two lemmas achieve this for $\sigma$ a quadratic character or $\sigma$ of the form $\AI(\Lambda^2)$ where $\Lambda$ is a character of $\A_K^\times$ where $K$ is a quadratic field and $\AI$ denotes automorphic induction.
 \begin{lemma}\label{l:basechange}Let $K/\Q$ be a quadratic field. Let $F$ and $\pi$ be as in Proposition \ref{p:lfunctionsfacts} and assume that $N$ is squarefree. Then the base change  $\pi_K$ of $\pi$ to $\GSp_4(\A_K)$ is cuspidal. Furthermore, the base change $\Pi_{4,K}$ of $\Pi_4$ to  $\GL_4(\A_K)$  is cuspidal and $\Pi_{4,K}$ is the lifting of $\pi_K$ from $\GSp_4(\A_K)$ to $\GL_4(\A_K)$.
 \end{lemma}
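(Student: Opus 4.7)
The plan is to deduce the lemma from the Arthur--Clozel cyclic base change theorem applied to $\Pi_4$. By Proposition~\ref{p:lfunctionsfacts}(i), $\Pi_4$ is cuspidal on $\GL_4(\A)$, so Arthur--Clozel produces a base change $\Pi_{4,K}$ on $\GL_4(\A_K)$ which is cuspidal if and only if $\Pi_4 \not\simeq \Pi_4 \otimes \chi_K$, where $\chi_K$ is the quadratic character of $\A^\times/\Q^\times$ associated with $K/\Q$. Equivalently, the task reduces to excluding the possibility $\Pi_4 = \AI_{K/\Q}(\sigma)$ for a cuspidal representation $\sigma$ on $\GL_2(\A_K)$ with $\sigma \not\simeq \sigma^\theta$, where $\theta$ denotes the non-trivial element of $\Gal(K/\Q)$.

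I would rule out this automorphic-induction case in two sub-cases. Self-duality of $\Pi_4$ forces either $\sigma^\vee \simeq \sigma^\theta$, in which case Arthur--Clozel shows that $\sigma$ is itself the base change of some $\tau$ on $\GL_2(\A)$, so $\Pi_4 \simeq \tau \boxplus (\tau \otimes \chi_K)$ would not be cuspidal, a contradiction with Proposition~\ref{p:lfunctionsfacts}(i); or $\sigma$ is self-dual. In the self-dual case, triviality of the central character of $\Pi_4$ combined with the standard relation $\omega_{\Pi_4} = \omega_\sigma|_{\A^\times} \cdot \chi_K$ forces $\omega_\sigma|_{\A^\times} = \chi_K$, so $\omega_\sigma$ is a non-trivial quadratic Hecke character of $K$; hence $\sigma$ is dihedral and $\Pi_4 = \AI_{L/\Q}(\mu)$ for some Hecke character $\mu$ of a degree-$4$ field $L$. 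I would then decompose $\wedge^2 \Pi_4$ and $\Sym^2 \Pi_4$ into isobaric sums of Hecke $L$-functions over $L$ and the intermediate quadratic fields, and combine with the symplectic structure of $\Pi_4$ (i.e., the pole of $L(s, \wedge^2 \Pi_4)$ at $s=1$ giving $L(s,\std(\pi))\zeta(s)$ via \eqref{exteriorsquaredeg5relationeq}), Proposition~\ref{p:lfunctionsfacts}(ii)--(iii), and the general-type hypothesis $F \in S_k(\Gamma^{(2)}_0(N))^{\rm G}$, to derive a contradiction.

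Once cuspidality of $\Pi_{4,K}$ is established, I would invoke Arthur's classification over $K$ --- running the same argument as in the proof of Proposition~\ref{p:lfunctionsfacts} but with $\Q$ replaced by $K$ --- to produce a cuspidal automorphic representation $\pi_K'$ of $\GSp_4(\A_K)$ whose $\GL_4(\A_K)$-transfer is $\Pi_{4,K}$. The identification $\pi_K' = \pi_K$ with the putative base change of $\pi$ then follows place-by-place from the compatibility between local cyclic base change and the Gan--Takeda local Langlands correspondence for $\GSp_4$.

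The main obstacle will be ruling out $\Pi_4 \simeq \Pi_4 \otimes \chi_K$ in the dihedral sub-case, i.e., excluding $\Pi_4 = \AI_{L/\Q}(\mu)$. This step is not purely formal: it requires combining the symplectic structure of $\Pi_4$, the triviality of its central character, the holomorphic discrete-series shape of $\Pi_{4,\infty}$ forced by $F$ having scalar weight $k > 2$, and the general-type assumption; the archimedean constraint alone is insufficient (dihedral parameters at $\infty$ are compatible with holomorphic discrete series), so the contradiction must come from the global $L$-function properties collected in Proposition~\ref{p:lfunctionsfacts}.
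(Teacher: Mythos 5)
Your overall architecture (Arthur--Clozel: $\Pi_{4,K}$ fails to be cuspidal only if $\Pi_4\simeq\Pi_4\otimes\chi_K$, i.e.\ $\Pi_4=\AI_{K/\Q}(\sigma)$ for a cuspidal $\sigma$ on $\GL_2(\A_K)$ with $\sigma\not\simeq\sigma^\theta$; then descend over $K$ and match places) is reasonable, but the decisive step --- excluding $\Pi_4=\AI_{K/\Q}(\sigma)$ --- is exactly where your plan has a genuine gap. You propose to derive the contradiction from global inputs only: self-duality/symplecticity of $\Pi_4$, trivial central character, the properties in Proposition \ref{p:lfunctionsfacts}, the weight at infinity, and the type (G) hypothesis. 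None of these sees the hypothesis that $N$ is \emph{squarefree}, and the lemma is false without that hypothesis: as the remark following the lemma notes, the constructions of \cite{JL-R12} (for $K$ real quadratic) and \cite{BDPS15} (for $K$ imaginary quadratic) produce forms of general type, of non-squarefree level, whose $\GL_4$-transfer is precisely an automorphic induction $\AI_{K/\Q}(\sigma)$ with $\sigma\not\simeq\sigma^\theta$; these satisfy every global constraint in your list (symplectic, self-dual, trivial central character, holomorphic discrete series at infinity, not CAP, not Yoshida). So the contradiction cannot ``come from the global $L$-function properties''; it must come from local data at the finite ramified primes. That is the paper's argument: for a prime $p$ ramified in $K$, squarefreeness of $N$ forces $\pi_p$ to have a vector fixed by the local Siegel congruence subgroup of level $p$ (or to be unramified), and an inspection of Table 1 of \cite{JL-R12} shows that the local parameter of such a $\pi_p$ (equivalently of $\Pi_{4,p}$) is never of the induced form coming from the ramified quadratic extension $K_v/\Q_p$; hence $\Pi_4\not\simeq\AI(\sigma)$. (The paper also organizes the deduction differently: it shows $L(s,\Pi_{4,K}\times\sigma)=L(s,\Pi_4\times\AI(\sigma))$ is pole-free for every cuspidal $\sigma$ on $\GL_1(\A_K)$ and $\GL_2(\A_K)$, using Proposition \ref{p:lfunctionsfacts} and the Prasad--Ramakrishnan adjointness formula, which yields cuspidality of $\Pi_{4,K}$ and of $\pi_K$ simultaneously without invoking the Arthur--Clozel cuspidality criterion or a separate descent step.)

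There are also two slips in your sub-case analysis. If $\sigma^\vee\simeq\sigma^\theta$ (conjugate self-duality) you cannot conclude that $\sigma$ is a base change from $\Q$: that would require $\sigma\simeq\sigma^\theta$, which is already excluded by cuspidality of $\AI(\sigma)=\Pi_4$, so this sub-case is not eliminated by your argument. And for $\sigma$ on $\GL_2(\A_K)$ the central character of $\AI_{K/\Q}(\sigma)$ is $(\omega_\sigma|_{\A^\times})\,\chi_K^{2}=\omega_\sigma|_{\A^\times}$ (the factor is $\chi_K^{\dim\sigma}$), not $(\omega_\sigma|_{\A^\times})\,\chi_K$; so triviality of $\omega_{\Pi_4}$ gives $\omega_\sigma|_{\A^\times}=1$ and does not force $\omega_\sigma=\chi_K\circ N_{K/\Q}$-type nontriviality, hence does not force $\sigma$ to be dihedral. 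Even granting the dihedral reduction, the proposed $\wedge^2/\Sym^2$ bookkeeping would still founder on the counterexamples above.
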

 \begin{proof}Let $\pi_K$ and $\Pi_{4,K}$ be as in the statement of the lemma. Since $\Pi_4$ is the lifting of $\pi$, it follows from the definition of base change that $\Pi_{4,K}$ is the lifting of $\pi_K$.  Let $\sigma$ be an arbitrary cuspidal automorphic representation of $\GL_1(\A_K)$ or $\GL_2(\A_K)$. By the definition of lifting, we have \[L(s, \pi_K \times \sigma) = L(s, \Pi_{4,K} \times \sigma)\] and so to prove that $\pi_K$ and $\Pi_{4,K}$ are cuspidal it suffices to show that $L(s, \Pi_{4,K} \times \sigma)$ has no poles. By the adjointness formula \cite[Prop 3.1]{PrasadRamakrishnan1999} we have $L(s, \Pi_{4,K} \times \sigma) = L(s, \Pi_4 \times \AI(\sigma))$. Note that $\AI(\sigma)$ is an automorphic representation of $\GL_2(\A)$ if $\sigma$ is a character of $\A_K^\times$ and $\AI(\sigma)$ is an automorphic representation of $\GL_4(\A)$ if $\sigma$ is a cuspidal representation of $\GL_2(\A_K)$. By Proposition \ref{p:lfunctionsfacts}, $L(s, \Pi_4 \times \AI(\sigma))$  is entire when $\sigma$ is a character. This reduces us to the case that $\sigma$ is a cuspidal representation of $\GL_2(\A_K)$; in this case $L(s, \Pi_{4,K} \times \sigma) =  L(s, \Pi_4 \times \AI(\sigma))$ has a pole if and only if  $\Pi_4 \simeq \AI(\sigma)$ (recall that $\Pi_4$ is self-dual). However, by looking at a prime $p$ which ramifies in $K$, we see that this is impossible: at any such prime, the local Langlands parameter of $\Pi_{4,p}$ is the local lifting of a representation of $\GSp_4(\Q_p)$ that has a vector fixed by the local Siegel congruence subgroup of level $p$ (since $N$ is squarefree), but an inspection of Table 1 of \cite{JL-R12} tells us that the local parameter of $\AI(\sigma_p)$ can never equal one of those. Hence we have completed the proof that $L(s, \Pi_{4,K} \times \sigma)$ has no poles.
  \end{proof}
 \begin{remark}The above proof crucially relies on the fact that $F$ has squarefree level $N$. If $N$ is allowed to be divisible by squares of primes, there indeed exists $F$ whose adelization generates a representation $\pi$ whose base change $\pi_K$ (for certain $K$) is non-cuspidal. Such $F$ are constructed in \cite{JL-R12} for $K$ real quadratic and \cite{BDPS15} for $K$ imaginary quadratic.
 \end{remark}

 \begin{lemma}\label{l:adjprops}Let $K/\mathbb Q$ be a quadratic field. Let $F$ and $\pi$ be as in Proposition \ref{p:lfunctionsfacts} and assume that $N$ is squarefree. Let $\chi_K$ be the quadratic Dirichlet character associated to the extension $K/\Q$. Let $\Lambda$ be any idele class character of $K^\times \bs A_K^\times$. Then the degree 10 $L$-function $L(s, \ad(\pi)\times \chi_K)$ and the degree 20 $L$-function $L(s, \ad(\pi)\times \AI(\Lambda^2))$ both satisfy the usual properties of an $L$-function, and are both entire.
 \end{lemma}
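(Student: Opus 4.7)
The approach is to realize both $L$-functions as twists of the $L$-function of an isobaric automorphic representation on $\GL_{10}(\A)$ and apply Rankin--Selberg theory.

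From the proof of Proposition \ref{p:lfunctionsfacts}, $L(s, \ad(\pi)) = L(s, \Pi_4, \Sym^2)$, and by Bump--Ginzburg \cite{BG} the symmetric square $\Sigma := \Sym^2 \Pi_4$ is an isobaric automorphic representation of $\GL_{10}(\A)$. The first step is to establish that $\Sigma$ is in fact \emph{cuspidal}. Granted this, the first claim follows since $L(s, \ad(\pi) \times \chi_K) = L(s, \Sigma \otimes \chi_K)$ is the standard $L$-function of the cuspidal $\GL_{10}$ representation $\Sigma \otimes \chi_K$, which by Jacquet--Piatetski-Shapiro--Shalika has the usual analytic properties and is entire. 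The second claim follows since $L(s, \ad(\pi) \times \AI(\Lambda^2)) = L(s, \Sigma \times \AI(\Lambda^2))$ is a $\GL_{10} \times \GL_2$ Rankin--Selberg $L$-function (which splits as $L(s, \Sigma \otimes \chi_1) \cdot L(s, \Sigma \otimes \chi_2)$ if $\AI(\Lambda^2) = \chi_1 \boxplus \chi_2$ descends from $\Q$); since $\hat\Sigma$ has rank $10$ while every cuspidal component of $\AI(\Lambda^2)$ has rank at most $2$, no coincidence can produce a pole.

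The main obstacle is the cuspidality of $\Sigma$. Using the alternative identification $L(s,\Sigma) = L(s, \Pi_5, \wedge^2)$, where $\Pi_5$ is a cuspidal representation of $\GL_5(\A)$ (cuspidality following from Arthur's classification for $\pi$ of general type), the odd dimension of $\Pi_5$ immediately rules out automorphic induction from a quadratic extension, which is the principal obstruction to cuspidality of $\wedge^2 \Pi_5$. Eliminating the remaining endoscopic symmetries, using Arthur's classification, the squarefree level hypothesis, and the cuspidality of $\Pi_{4,K}$ from Lemma \ref{l:basechange}, should then yield the cuspidality of $\Sigma$.

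As a backup, if the cuspidality of $\Sigma$ proves too delicate, one can work via base change. Prasad--Ramakrishnan adjointness applied to $\AI(\mathbf{1}_K) = \mathbf{1}_\Q \boxplus \chi_K$ and to $\AI(\Lambda^2)$ gives
\begin{equation*}
L(s, \ad(\pi)) \, L(s, \ad(\pi) \times \chi_K) = L(s, \Sym^2 \Pi_{4,K}), \quad L(s, \ad(\pi) \times \AI(\Lambda^2)) = L(s, \Sym^2 \Pi_{4,K} \otimes \Lambda^2).
\end{equation*}
Since base change preserves symplectic type, $\Pi_{4,K}$ is cuspidal self-dual symplectic (Lemma \ref{l:basechange}), so Bump--Ginzburg--Shahidi applied over $K$ yields entirety of $L(s, \Sym^2 \Pi_{4,K})$ and of its twist by $\Lambda^2$, modulo verifying that $\Pi_{4,K}$ admits no self-twist by $\Lambda^2$ (which follows from a dimensional and endoscopic argument analogous to the one in Lemma \ref{l:basechange}). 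For the first identity, separating out $L(s, \ad(\pi) \times \chi_K)$ from the product requires an additional argument leveraging the non-vanishing of $L(s, \ad(\pi))$ on $\Re(s) = 1$ from Proposition \ref{p:lfunctionsfacts}.
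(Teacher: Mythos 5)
Your primary route has a fundamental gap: \cite{BG} does not produce an automorphic representation $\Sym^2 \Pi_4$ of $\GL_{10}(\A)$; it only establishes analytic properties of the symmetric square $L$-function via an integral representation. Symmetric square functoriality for $\GL_4$ is not known, and your further claim that this hypothetical lift $\Sigma$ is \emph{cuspidal} is both unsubstantiated (``eliminating the remaining endoscopic symmetries \ldots should then yield'' is not an argument, and note that $\Pi_5$ is only constructed in Proposition \ref{p:lfunctionsfacts} as an isobaric sum, not shown to be cuspidal) and unnecessary for the lemma. So the first half of the proposal cannot be carried out as written.

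Your backup is close to the paper's actual proof of the degree 20 statement: the identity $L(s, \ad(\pi)\times \AI(\Lambda^2)) = L\bigl(s, \Sym^2(\Pi_{4,K}\otimes \Lambda)\bigr)$, cuspidality of $\Pi_{4,K}$ from Lemma \ref{l:basechange}, and \cite[Thm.\ 7.5]{BG} applied over $K$ are exactly the ingredients used there. But two steps are off. First, a pole of the symmetric square $L$-function of the cuspidal representation $\Pi_{4,K}\otimes\Lambda$ occurs only if that representation is \emph{orthogonal} self-dual; you instead try to exclude self-duality outright (``no self-twist by $\Lambda^2$''), which is unproven (nothing ``dimensional'' rules out a self-twist by a character of order $2$ or $4$, and the level argument of Lemma \ref{l:basechange} does not obviously transfer to extensions of $K$) and is simply false when $\Lambda^2=1$, e.g.\ $\Lambda=1_K$, the very case needed for the $\chi_K$ part. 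The paper's point is that self-duality is harmless: $\Pi_{4,K}\otimes\Lambda$ is the lift of $\pi_K\otimes\Lambda$ from $\GSp_4(\A_K)$, hence symplectic if self-dual, so its $\Sym^2$ has no pole. Second, for $L(s,\ad(\pi)\times\chi_K)$, entirety of the product $L(s,\ad(\pi))L(s,\ad(\pi)\times\chi_K)$ together with non-vanishing of $L(s,\ad(\pi))$ on $\Re(s)=1$ does not yield entirety (let alone the functional equation and the other ``usual properties'') of the twisted factor: dividing the entire product by $L(s,\ad(\pi))$ could a priori create poles at zeros of $L(s,\ad(\pi))$ inside the critical strip. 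The missing ingredient is Takeda's theorem \cite{Takeda} on the twisted symmetric square of $\GL_4$ over $\Q$, which gives meromorphy with possible poles only at $s=0,1$; only with that input does the non-vanishing of $L(1,\ad(\pi))$ from Proposition \ref{p:lfunctionsfacts} close the argument, as in the paper.
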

 \begin{proof}It can be verified (as a formal identity that holds at each place) that \[L(s, \Sym^2(\pi_{4,K} \times \Lambda)) = L(s, \ad(\pi) \times \AI(\Lambda^2)).\] By Lemma \ref{l:basechange}, $\pi_{4,K} \times \Lambda$ is a cuspidal representation of $\GL_4(\A_K)$.  Furthermore, if $\pi_{4,K} \times \Lambda$ is self-dual, then it must be symplectic since $\Pi_{4,K}\times \Lambda$ is the lift of $\pi_K\times \Lambda$ from $\GSp_4(\A_K)$ to $\GL_4(\A_K)$. So by \cite[Thm.\ 7.5]{BG}, $ L(s, \ad(\pi) \times \AI(\Lambda^2))$ has the properties of an $L$-function and is entire.

 To show the same fact for $L(s, \ad(\pi)\times \chi_K) = L(s, \Sym^2(\Pi_4)\times \chi_K)$ we appeal to \cite[Thm. p. 104]{Takeda}, which shows that the only possible poles of $L(s, \Sym^2(\Pi_4)\times \chi_K)$ are at $s=0, 1$ (these two possible poles are related by the functional equation). However, by what we have already proved, $L(s,\ad(\pi)\times \AI(1)) =L( s, \ad(\pi))L(s, \ad(\pi)\times \chi_K)$ is entire. So a pole at $s=1$ for $L(s, \ad(\pi)\times \chi_K)$ will imply that $L(1, \ad(\pi))=0$. This contradicts the last part of Proposition \ref{p:lfunctionsfacts}.
 \end{proof}

\begin{remark}\label{r:holofunctions}Let $F \in S_k(\Gamma^{(2)}_0(N))^{ \rm G}$ give rise to an irreducible representation $\pi$ and assume that $N$ is squarefree. Let $d$ be a fundamental discriminant that is divisible by at least one prime not dividing $N$, put $K=\Q(\sqrt{d})$, and let $\chi_d$ be the quadratic Dirichlet character associated to $K/\Q$ and let $\Lambda$ be a character of the ideal class group $\Cl_K$. Then combining Proposition \ref{p:lfunctionsfacts} and Lemma \ref{l:adjprops}, we see that the $L$-functions $L(s, \pi \times \AI(\Lambda))$, $L(s, \ad(\pi) \times \AI(\Lambda^2))$, $L(s, \ad(\pi) \times \chi_d)$, $L(s, \std(\pi) \times \AI(\Lambda^2))$, $L(s, \std(\pi) \times \chi_d)$ are all holomorphic everywhere in the complex plane.
\end{remark}

\subsection{A consequence of the refined Gan--Gross--Prasad identity}

Let $K = \Q(\sqrt{d})$ where $d<0$ is a fundamental discriminant. Recall from the introduction that for any character $\Lambda$ of the finite group $\Cl_K$, we define $B(F, \Lambda) = \sum_{S \in \Cl_K}a(F, S) \Lambda(S)$. The refined Gan--Gross--Prasad (GGP) conjecture in the context of Bessel periods of $\PGSp_4 \simeq \SO_5$ (and more generally, for Bessel periods of orthogonal groups)  was formulated in \cite[(1.1)]{yifengliu}, and made more explicit in the context of Siegel cusp forms in \cite{DPSS15}. This conjecture implies an identity expressing the square of $|B(F, \Lambda)|$ as a ratio of $L$-values, up to some local integrals. For the purpose of this paper, we only need a relatively weak consequence of this identity, which we formulate explicitly as \emph{Hypothesis G} below. In this subsection we do not assume that $N$ is squarefree.
\medskip

\emph{Suppose that $F \in  S_k(\Gamma^{(2)}_0(N))^{ \rm T}$ gives rise to an irreducible representation $\pi$. Then $F$ is said to satisfy \textbf{Hypothesis G} if there exists a constant $C_F$ such that for each imaginary quadratic field $K=\Q(\sqrt{d})$ \up{with $d<0$ a fundamental discriminant} and each character $\Lambda$ of $\Cl_K$ we have \begin{equation}\label{e:ggpineq}\tst |B(F, \Lambda)|^2  \le C_F |d|^{k-1} L(\tfrac12, \pi \times \AI(\Lambda)).\end{equation}}

\begin{proposition}\label{p:ggpimpliesG} Let $\pi$ be an irreducible representation that arises from $S_k(\Gamma^{(2)}_0(N))^{ \rm T}$. Suppose that for each factorizable $F \in  S_k(\Gamma^{(2)}_0(N))^{ \rm T}$ that gives rise to $\pi$, and each ideal class character $\Lambda$ of an imaginary quadratic field $K=\Q(\sqrt{d})$ \up{with $d$ a fundamental discriminant},  the refined Gan--Gross--Prasad identity \up{in the form written down in Conjecture 1.3 of \cite{DPSS15}} holds for $(\phi, \Lambda)$, where $\phi$ is the adelization of $F$. Then any $F \in  S_k(\Gamma^{(2)}_0(N))^{ \rm T}$ that gives rise to $\pi$ satisfies Hypothesis G.
\end{proposition}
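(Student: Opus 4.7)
The plan is to reduce to the factorizable case via linearity of the Bessel period, apply the refined Gan--Gross--Prasad identity term-by-term, and bound the resulting local factors uniformly in $(K,\Lambda)$. Since $F$ gives rise to $\pi$, its adelization $\phi$ lies in the finite-dimensional space $V_{\pi_f}^{K_0(N)_f}\otimes\mathbb{C}v_\infty$, where $v_\infty$ is the fixed vector of the appropriate $K_\infty$-type and $V_{\pi_f}^{K_0(N)_f}$ is a finite restricted tensor product of finite-dimensional local spaces, nontrivial only at primes dividing $N$. Choosing a basis of pure tensors I would expand
\[
F = \sum_{j=1}^n c_j F_j, \qquad F_j \in S_k(\Gamma^{(2)}_0(N))^{\rm T} \text{ factorizable and giving rise to }\pi,
\]
and by linearity of the Bessel period together with Cauchy--Schwarz,
\[
|B(F,\Lambda)|^2 \le \Big(\sum_{j=1}^n |c_j|^2\Big)\Big(\sum_{j=1}^n |B(F_j,\Lambda)|^2\Big).
\]

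Next, to each pair $(F_j,\Lambda)$ I would apply the assumed refined GGP identity \cite[Conj.~1.3]{DPSS15}, which produces an exact equality of the form
\[
|B(F_j,\Lambda)|^2 = D_{F_j}\cdot |d|^{k-1}\cdot L(\tfrac12,\pi\times\AI(\Lambda))\cdot \prod_v \alpha_v(F_j;\Lambda_v,K_v),
\]
with $\alpha_v$ the normalized local integrals of Liu \cite{yifengliu} and $D_{F_j}$ a nonzero constant depending only on $F_j$. (If the central $L$-value vanishes the bound is trivial.) The proposition thus reduces to showing $\prod_v \alpha_v \ll_{F_j} 1$ uniformly in $(K,\Lambda)$. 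The archimedean character $\Lambda_\infty$ is trivial since $\Lambda$ factors through the finite group $\Cl_K$, and $\alpha_\infty$ is a fixed constant attached to the holomorphic vector of weight $k$. At finite $v \nmid N|d|$ Liu's normalization forces $\alpha_v = 1$. At primes $v\mid N$ there are only finitely many choices, $\Lambda_v$ ranges over the compact group of characters of $K_v^\times/\Q_v^\times$ trivial on the maximal compact, and $\alpha_v$ depends continuously on $\Lambda_v$, yielding a uniform bound. At $v\mid d$ with $v\nmid N$, the vector $\phi_{j,v}$ is spherical and $\alpha_v$ can be evaluated via unramified Bessel formulas and bounded independently of the local ramification behavior at $v$.

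The main obstacle is precisely this last point: ensuring that the local integrals at $v\mid d$ are bounded independently of whether $v$ splits, is inert, or ramifies in $K$. Here I would appeal to the explicit computations in \cite{DPSS15} (building on Sugano's formula), which express these local factors as quotients of local $L$-factors whose size is $1+O(p^{-1})$; consequently the product over $v\mid d$ converges to an absolute constant. Summing over $j$ then yields a $C_F$ as required by Hypothesis G. A subsidiary minor point is that the decomposition $F = \sum c_j F_j$ is not canonical, but the argument only requires the existence of \emph{some} such decomposition.
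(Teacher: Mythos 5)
Your overall strategy---reduce to factorizable vectors by linearity of the Bessel period and Cauchy--Schwarz, apply the refined GGP identity, and bound the local integrals uniformly in $(K,\Lambda)$---is the same as the paper's. But there is a genuine gap at the primes $p\mid N$. The local integral there depends on $d$ not merely through the isomorphism class of $K_p$ (of which there are indeed only finitely many) but through the actual matrix $S_d$: both the torus $T_{S_d}(\Q_p)$ and the character $\theta_{S_d}$ of the unipotent radical vary with $d$, so as $d$ ranges over all fundamental discriminants you face infinitely many local data, and ``finitely many choices plus continuity in $\Lambda_p$'' does not by itself yield a bound uniform in $d$. The paper's proof consists precisely of fixing this: one conjugates $S_d$ to one of finitely many representatives $S'$ (determined by the class of $d$ in $\Q_p^\times/(\Q_p^\times)^2$ and by $d$ mod $4$) via a matrix $A'\in\GL_2(\Z_p)$; the integrality of $A'$ is essential, since the corresponding block-diagonal element lies in the local Siegel congruence subgroup and therefore fixes $\phi_p$, so the integral is genuinely unchanged. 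One must also know that the stable integral over $N(\Q_p)$ stabilizes on a compact set of size depending only on $N$ (not on $d$ or $\Lambda_p$), as in \cite[Prop. 2.14]{DPSS15}; together with absolute convergence \cite[Thm. 2.1 (i)]{yifengliu} this gives the $\Lambda_p$-uniformity by bounding with absolute values, which is more robust than asserting continuity of a regularized integral on a compact character group. None of these points appear in your proposal, and they are the substance of the proof.

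A second step is flawed as written: at $p\mid d$, $p\nmid N$, you claim the normalized factors are $1+O(1/p)$ and conclude that the product over $p\mid d$ ``converges to an absolute constant.'' That inference fails: $\prod_{p\mid d}(1+C/p)$ can be as large as a power of $\log\log|d|$ (take $d$ divisible by all small primes), so it is not bounded by a constant $C_F$ as Hypothesis G demands. What rescues this---and is what the paper does---is that the local integrals at all places away from $N$, including $p\mid d$ and the archimedean place, are computed \emph{exactly} in \cite[\S2.2, \S3.3--3.5]{DPSS15} (via Sugano's formula), so no residual local factors remain there: the identity takes the form $|B(F,\Lambda)|^2=A_F|d|^{k-1}L(\tfrac12,\pi\times\AI(\Lambda))\prod_{p\mid N_F}J_p(\phi_p,\Lambda_p)$, and only the finitely many factors at $p\mid N_F$ need to be bounded, by the argument described above.
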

\begin{proof} Note that the subspace of $S_k(\Gamma^{(2)}_0(N))^{ \rm T}$ generated by forms that give rise to a fixed irreducible representation $\pi$, has a basis consisting of factorizable forms. So, for the purpose of verifying Hypothesis G, it suffices to prove \eqref{e:ggpineq} for factorizable $F$ whose adelization $\phi = \otimes \phi_v$ generates $\pi$. Assuming the truth of Conjecture 1.2  of \cite{DPSS15} for $(\phi, \Lambda)$, and combining it with the explicit calculations performed in \cite[\S2.2, \S3.3-3.5]{DPSS15}, we see that \[\tst \left|B(F, \Lambda)\right|^2  = A_F |d|^{k-1} L(\tfrac12, \pi \times \AI(\Lambda))  \prod_{p|N_F}J_p(\phi_p, \Lambda_p) \] where $A_F$ depends on $F$, $N_F|N$ is the smallest integer such that $F \in S_k(\Gamma^{(2)}_0(N_F))^{ \rm T}$, and the normalized local factors $J_p(\phi_p, \Lambda_p)$ (which depend on $p$, $\phi_p$, $\Lambda_p$ and  $d$) are defined in \cite[(30)]{DPSS15}. To complete the proof, it suffices to show that  $J_p(\phi_p, \Lambda_p) \ll_{p,\phi_p} 1$ (i.e., is bounded by some constant that depends on $p$, $\phi_p$ but not on $d$ or $\Lambda_p$). It suffices to show this for the unnormalized local factors $J_{0,p}(\phi_p, \Lambda_p)$ defined in \cite[(29)]{DPSS15} since the normalized local $L$-factors $J_p(\phi_p, \Lambda_p)$ only differ from these by certain absolutely bounded $L$-factors appearing in \cite[(30)]{DPSS15}.

To show the above, we move to a purely local setup. Let $p$ be a prime dividing $N_F$.  Let $F =\Q_p$.  We fix a set $M$ of coset representatives of $F^\times / (F^\times)^2$ such that all elements of $M$ are taken from $\Z_p$, and each $r \in M$ generates the discriminant ideal of $F(\sqrt{r})/F$. We let $r_p$ equal the unique representative in $M$ that corresponds to $d$. The assumptions imply that \[r_p = d u_p^2, \quad \text{ for some }u \in (\Z_p^\times)^2.\]
Let $K$ equal  $F\times F$ (the ``split case") if $r_p \in F^2$ and $K=F(\sqrt{r_p})= F(\sqrt{d})$  (the ``field case") if $r_p \notin F^2$. Fix the matrix $S=S_d$ as in \cite[(75)]{DPSS15}, so that $S_d$ has discriminant $d$. Let $T(F)=T_S(F) \simeq K^\times$ be the associated subgroup of $\GSp_4(F)$, let $N(F)\subset \GSp_4(F)$ denote the unipotent for the Siegel parabolic and $\theta_S$ the character on $N(F)$ given by $\theta_S(\mat{I_2}{X}{0}{I_2}) = \psi(\mathrm{tr}(SX))$ where $\psi$ is a fixed unramified additive character.  Then we need to show that the integral $$J_{0,p}(\phi_p, \Lambda_p) =\int_{F^\times \bs T(F)}\int^{\St}_{N(F)}\langle \pi_p(t_pn_p) \phi_p , \phi_p \rangle \Lambda_p^{-1}(t_p) \theta_S^{-1}(n_p)\,dn_p\,dt_p$$ is bounded by some quantity that does not depend on $d$ or $\Lambda_p$. Note above that the superscript $\St$ denotes stable integral as in \cite{yifengliu}; this means that the integral can be replaced by any sufficiently large compact subgroup (as we will do below). Put $S'=\mat{-\frac{r_p}4}{0}{0}{1}$,
$A' = \mat{u_p}{0}{0}{1}$ if $d \equiv 0 \pmod{4}$. In the case that $d \equiv 1 \pmod{4}$ we put $S' = \mat{\frac{1-r_p}4}{\frac12}{\frac12}{1}$, $A' = \mat{u_p}{0}{\frac{1-u_p}{2}}{1}$. In either case,  $A' \in \GL_2(\Z_p)$ and  $S'=\T{A}SA$. To show that $J_p$ is bounded independently of $d$, we use a simple change of variables
  ($n_p \mapsto An_p\T{A}$, $t_p \mapsto At_p'A^{-1}$) to see that the integral $J_{0,p}(\phi_p, \Lambda_p)$ remains unchanged when the matrix $S$ is replaced by $S'$. This shows that $J_{0,p}(\phi_p, \Lambda_p)$ does not depend on the actual value of $d$ but only on the class of $d$ in $F^\times / (F^\times)^2$ together with $d$ modulo 4,  of which there are only finitely many possibilities. To show that the resulting integral is absolutely bounded independent of $\Lambda$, we replace the integral $\int^{\St}_{N(F)}$ by $\int_{p^{-m_p}N(\Z_p)}$ where $m_p$ depends only on the level $N$ (as follows from the argument of \cite[Prop. 2.14]{DPSS15}). The resulting integral is absolutely convergent \cite[Thm. 2.1 (i)]{yifengliu} and hence bounded independently of $\Lambda$.
\end{proof}
\begin{remark}Furusawa and Morimoto  \cite{furmori} have proved the refined GGP identity in the form required in Prop. \ref{p:ggpimpliesG} for $\Lambda=1_K$, and they have announced a proof of this identity for general $\Lambda$.
\end{remark}

\begin{remark}Under certain assumptions (namely $N$ odd and squarefree, $F$ a newform, and $\big( \frac{d}{p} \big) = -1$ for all primes $p$ dividing $N$) the relevant local integrals were explicitly computed in \cite{DPSS15}, and so under these assumptions Proposition \ref{p:ggpimpliesG} follows from Theorem 1.13 of \cite{DPSS15}.
\end{remark}

\begin{corollary}\label{c:yoshidahyp}Suppose that $F \in  S_k(\Gamma^{(2)}_0(N))^{ \rm Y}$ gives rise to an irreducible representation. Then $F$ satisfies Hypothesis G.
\end{corollary}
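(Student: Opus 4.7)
The plan is to exploit the explicit realization of Yoshida lifts via the theta correspondence for the dual pair $(\mathrm{GSp}_4, \mathrm{GO}(V))$, where $V$ is a four-dimensional quadratic space whose special orthogonal group is isogenous to $\GL_2 \times \GL_2/\GL_1$. Since $F$ gives rise to a Yoshida type representation $\pi$, by definition there exist cuspidal automorphic representations $\pi_1$ (of weight $2k-2$) and $\pi_2$ (of weight $2$) of $\GL_2(\A)$ with $L(s,\pi) = L(s,\pi_1)L(s,\pi_2)$, and hence $L(s, \pi \times \AI(\Lambda)) = L(s, \pi_1 \times \AI(\Lambda))\,L(s, \pi_2 \times \AI(\Lambda))$ for every ideal class character $\Lambda$ of $K = \Q(\sqrt d)$. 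By linearity of $F \mapsto B(F,\Lambda)$ and the existence of a basis of factorizable forms for the $\pi$-isotypic subspace of $S_k(\Gamma^{(2)}_0(N))^{\rm Y}$, it is enough to verify Hypothesis G for a factorizable $F$.

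Fix such an $F$ whose adelization $\phi$ is a pure tensor. The seesaw diagram
\[
\mathrm{GSp}_4 \times (T_S \cdot Z) \;\longleftrightarrow\; \mathrm{GO}(V) \times \mathrm{GO}(V),
\]
where $T_S$ is the Bessel torus attached to $S$ and $Z$ the center of $\mathrm{GSp}_4$, unfolds $B(F, \Lambda)$ into an integral of the theta kernel against $\phi$ paired with the dual vector in the $(\pi_1, \pi_2)$-isotypic component of the Weil representation of $\mathrm{GO}(V)$, twisted by $\Lambda$ along a copy of $K^\times$ inside each of the two $\GL_2$-factors of $\mathrm{GSO}(V)$. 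After collecting local constants, this seesaw identity (used in the Yoshida setting in \cite{sahaschmidt}) yields
\[
B(F, \Lambda) \;=\; \mathcal P^{(1)}_\Lambda(\phi_1)\,\mathcal P^{(2)}_\Lambda(\phi_2) \cdot \prod_{p \in S_{\mathrm{ram}}} I_p,
\]
where $\mathcal P^{(i)}_\Lambda$ is the toric $\Lambda$-period of $\pi_i$ against $T_K(\A) \subset \GL_2(\A)$, and the $I_p$ are local integrals supported on the finitely many primes dividing $Nd$.

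Waldspurger's formula applied to each factor gives $|\mathcal P^{(i)}_\Lambda(\phi_i)|^2 \ll_F |d|^{\alpha_i}\,L(\tfrac12, \pi_i \times \AI(\Lambda))$, where the archimedean exponents $\alpha_1, \alpha_2$ are computed from the weights $2k-2$ and $2$ via Popa's explicit evaluations and satisfy $\alpha_1 + \alpha_2 = k-1$. Multiplying the two bounds and using the factorization of $L(s, \pi \times \AI(\Lambda))$ above yields \eqref{e:ggpineq} provided the product $\prod_{p \in S_{\mathrm{ram}}}|I_p|$ is bounded uniformly in $d$ and $\Lambda$.

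The main obstacle is precisely this last uniform bound, and it is handled exactly as in the proof of Proposition \ref{p:ggpimpliesG}: a change of variables replaces the matrix $S_d$ by one lying in a finite set of representatives of $\Q_p^\times/(\Q_p^\times)^2$ paired with $d \bmod 4$, so each local integral takes only finitely many values, and the stable-integral formalism of \cite{yifengliu} reduces each stable integral to an absolutely convergent integral over a compact set whose size depends only on the level $N$. A computationally lighter alternative is to bypass the seesaw argument altogether, using the explicit formulas for Fourier coefficients of Yoshida lifts developed in \cite{sahaschmidt} to write $a(F,S)$ as a product of Fourier coefficients of half-integral weight forms attached to $\pi_1$ and $\pi_2$, after which two applications of Proposition \ref{p:walds} give \eqref{e:ggpineq} directly.
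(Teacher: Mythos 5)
The paper's own proof is a one-line citation: for Yoshida type representations the refined Gan--Gross--Prasad identity is not merely conjectural but a \emph{theorem} of Liu \cite[Thm 4.3]{yifengliu}, so Proposition \ref{p:ggpimpliesG} applies verbatim and yields Hypothesis G. Your main argument instead sets out to reprove this special case by hand, via the theta/seesaw factorization of the Bessel period into two toric periods followed by Waldspurger's formula. That is indeed the mechanism underlying Liu's theorem, but as written it has concrete gaps. First, a holomorphic Yoshida lift is realized as a theta lift from $\mathrm{GO}(V)$ with $V$ attached to a \emph{definite} quaternion algebra $D$, so $\mathrm{GSO}(V)\simeq (D^\times\times D^\times)/\GL_1$ rather than $(\GL_2\times\GL_2)/\GL_1$; the periods $\mathcal P^{(i)}_\Lambda$ are toric periods on $D^\times$, and one must use Waldspurger/Tunnell--Saito on the inner form (this still only gives an upper bound in terms of $L(\tfrac12,\pi_i\times\AI(\Lambda))$, which suffices, but the setup you describe is not the one in which $F$ lives). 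Second, and more seriously, the whole content of Hypothesis G is \emph{uniformity in $d$ and $\Lambda$} of the ramified local factors. You assert this is handled ``exactly as in the proof of Proposition \ref{p:ggpimpliesG}'', but the local integrals produced by your seesaw unfolding are not the integrals $J_{0,p}(\phi_p,\Lambda_p)$ of \cite{DPSS15}; the change-of-variables and stable-integral argument there is tailored to the Bessel-period integral over $T(F)N(F)$ and does not transfer to theta-unfolding local factors without a separate argument. This is exactly what citing \cite[Thm 4.3]{yifengliu} buys: it places you back in the local framework of \cite{DPSS15}, where Proposition \ref{p:ggpimpliesG} has already carried out the uniform local estimate, so no new local work is needed.

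The ``computationally lighter alternative'' at the end does not work. Hypothesis G bounds $B(F,\Lambda)$ for \emph{every} ideal class character $\Lambda$, i.e.\ it involves the full dihedral twists $L(\tfrac12,\pi_i\times\AI(\Lambda))$, whereas Proposition \ref{p:walds} (Kohnen--Waldspurger) only controls quadratic twists $L(\tfrac12, g\otimes\chi_d)$, which is essentially the $\Lambda=1_K$ contribution. Moreover, for a Yoshida lift it is the Bessel period $B(F,\Lambda)$, not the individual Fourier coefficient $a(F,S)$, that factors as a product of periods: $a(F,S)$ is a sum over ideal classes (representation numbers of quaternary forms) and is not a product of two half-integral weight Fourier coefficients, so two applications of Proposition \ref{p:walds} cannot give \eqref{e:ggpineq}.
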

\begin{proof} This follows from Proposition \ref{p:ggpimpliesG} since the refined Gan--Gross--Prasad conjecture is known for Yoshida lifts by \cite[Thm 4.3]{yifengliu}.
\end{proof}

\begin{proof}[The proof of Corollary \ref{c:lvalueslarge}]
Let $\pi$ be a cuspidal automorphic representation of $\GSp_4(\A)$ that arises from a form $F \in S_k(\Gamma_0^{(2)}(N))^T$ with $k>2$ even and $N$ odd and squarefree. Assume that the refined Gan--Gross--Prasad identity holds. Then for any fundamental matrix $S \in \Lambda_2$ of discriminant $d$, we put $K=\Q(\sqrt{d})$ and use Cauchy-Schwarz and Proposition \ref{p:ggpimpliesG} to conclude \begin{align*}\tst|a(F,S)|^2 &= \frac{1}{|\Cl_K|^2}\left|\sum_{\Lambda \in \widehat{\Cl_K}} B(F, \Lambda) \Lambda^{-1}(S)\right|^2  \ \le   \frac{1}{|\Cl_K|}\sum_{\Lambda \in \widehat{\Cl_K}} |B(F, \Lambda)|^2  \\ \tst &\ll_{\pi} |d|^{k-1}\frac{1}{|\Cl_K|}\sum_{\Lambda \in \widehat{\Cl_K}}  L(\tfrac12, \pi \times \AI(\Lambda)).
\end{align*} The proof of the Corollary now follows from Theorem B.
\end{proof}

\begin{proof}[The proof of Corollary \ref{c:yoshidavalues}]For $f$, $g$ as in Corollary \ref{c:yoshidavalues}, and $N=\mathrm{lcm}(N_1, N_2)$ there exists $F \in S_k(\Gamma_0^{(2)}(N))^{\rm{Y}}$  which gives rise to an irreducible representation $\pi$ such that (see \cite[Prop. 3.1]{sahaschmidt}) \[L(s, \pi \times \AI(\Lambda)) = L(s, f \times \AI(\Lambda))L(s, g \times \AI(\Lambda)).\] Now Corollary \ref{c:yoshidavalues} follows from Corollary \ref{c:lvalueslarge} and the triangle inequality.
\end{proof}

\begin{proposition}\label{p:saito-kurokawa}Suppose that $F \in  S_k(\Gamma^{(2)}_0(N))^{ \rm CAP}$ gives rise to an irreducible representation $\pi$ such that $L^N(s, \pi)= L^N(s, \pi_0)\zeta^N(s+\frac12)\zeta^N(s-\frac12)$ where $\pi_0$ is a representation of $\GL_2(\A)$. Then the Fourier coefficients of $F$ at fundamental matrices depend only on the associated discriminant, i.e., if $S_1$, $S_2$ are two fundamental matrices with the same discriminant $d$, then $a(F, S_1) = a(F, S_2)$. Furthermore, there exists a constant $C_F$ such that for each fundamental discriminant $d$  and any matrix $S$ of discriminant $d$, we have \[|a(F, S)|  \le C_F |d|^{\frac{k}{2}-\frac34} L(\tfrac12, \pi_0 \times \chi_d)^{1/2}.\]
\end{proposition}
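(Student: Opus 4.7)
The plan is to combine the classical Saito-Kurokawa correspondence (which forces $a(F,S)$ to depend only on $\disc(S)$ for fundamental $S$) with B\"ocherer's conjecture for the trivial ideal class character (known in the CAP case by \cite{DPSS15} together with \cite{furmori}), and then to unpack the resulting $L$-value factorization using the analytic class number formula.

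For the first assertion, I would invoke the explicit description of the CAP subspace $S_k(\Gamma^{(2)}_0(N))^{\rm CAP}$ via cusp forms of half-integral weight $k-\tfrac12$ (Eichler-Zagier for $N=1$, Manickam-Ramakrishnan for squarefree $N$, and extensions for general $N$). Under this correspondence, any $F$ generating an irreducible CAP representation $\pi$ with the given $L$-factorization is (up to scaling) the Saito-Kurokawa lift of some cuspidal half-integral weight form $h$, whose Shimura lift generates $\pi_0$. The associated Eichler-Zagier-type formula reads
\[
a(F,S) \;=\; \sum_{t\mid \gcd(a,b,c)} t^{k-1}\,c_h\!\left(\tfrac{|\disc(S)|}{t^{2}}\right),\qquad S=\mat{a}{b/2}{b/2}{c},
\]
which, for fundamental $S$ (where $\gcd(a,b,c)=1$), collapses to $a(F,S)=c_h(|d|)$ and thus depends only on $d=\disc(S)$. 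Equivalently, the Bessel period $B(F,\Lambda)$ vanishes for all $\Lambda\neq 1_K$.

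For the bound, the previous display gives $B(F,1_K)=|\Cl_K|\,c_h(|d|)$ and hence $|a(F,S)|^{2}=|B(F,1_K)|^{2}/|\Cl_K|^{2}$. Applying B\"ocherer's conjecture in the CAP case yields $|B(F,1_K)|^{2}\ll_F |d|^{k-1}L(\tfrac12,\pi\times\chi_d)$. By the hypothesis on $L(s,\pi)$, the spin $L$-function factors (up to bounded local factors at primes dividing $N$) as
\[
L(\tfrac12,\pi\times\chi_d) \;\asymp_F\; L(\tfrac12,\pi_0\times\chi_d)\,L(1,\chi_d)\,L(0,\chi_d).
\]
Using the functional equation of $L(s,\chi_d)$ together with the analytic class number formula $L(1,\chi_d)=\tfrac{2\pi h_K}{w_K\sqrt{|d|}}$ gives $L(0,\chi_d)=\tfrac{2 h_K}{w_K}$, so that $L(1,\chi_d)L(0,\chi_d)=\tfrac{4\pi h_K^{2}}{w_K^{2}\sqrt{|d|}}$. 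Combined with $|\Cl_K|=h_K$ this produces $|a(F,S)|^{2}\ll_F |d|^{k-3/2}L(\tfrac12,\pi_0\times\chi_d)$, and taking square roots yields the asserted bound.

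The main obstacle is setting up the Saito-Kurokawa lifting bijection cleanly at arbitrary (possibly non-squarefree) level $N$, so that the Eichler-Zagier formula above applies to every $F$ generating an irreducible CAP representation with the given $L$-factorization. For squarefree $N$ this is essentially \cite{manickram}; for general $N$ one either unpacks the decomposition of the CAP subspace into SK newform and oldform components across all divisors of $N$, or else argues representation-theoretically that the local Bessel functionals of the non-tempered SK components $\pi_v$ factor through the trivial character of the Bessel torus, forcing $B(F,\Lambda)=0$ for $\Lambda\neq 1_K$ and hence $a(F,S)=B(F,1_K)/|\Cl_K|$ by inverting \eqref{e:invertedfund}.
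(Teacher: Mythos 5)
Your route is genuinely different from the paper's, but as written it has a gap at its key analytic step. The inequality you invoke, $|B(F,1_K)|^2\ll_F |d|^{k-1}L(\tfrac12,\pi\times\chi_d)$, is \emph{not} supplied by \cite{DPSS15} together with \cite{furmori}: the refined Gan--Gross--Prasad identity of \cite{yifengliu}/\cite{DPSS15} (and the Furusawa--Morimoto theorem) is formulated and proved for tempered, i.e.\ non-CAP, representations, and the paper's Hypothesis G / Proposition \ref{p:ggpimpliesG} is likewise restricted to $S_k(\Gamma^{(2)}_0(N))^{\rm T}$. For Saito--Kurokawa type the correct substitute is Qiu's analogue of the GGP identity \cite{qiu}, and this is exactly what the paper uses: Qiu gives $B(F,\Lambda)=0$ for $\Lambda\neq 1_K$ (whence, by \eqref{e:invertedfund}, $a(F,S)=B(F,1_K)/|\Cl_K|$ depends only on $d$, at \emph{any} level $N$), and for $\Lambda=1_K$ the bound follows from Qiu's formula by the same local-integral boundedness argument as in Proposition \ref{p:ggpimpliesG}, directly producing the factor $|d|^{k-1}$ together with $L(\tfrac12,\pi_0\times\chi_d)$ and the class-number contribution. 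Your representation-theoretic "alternative" for non-squarefree $N$ (local Bessel functionals of the non-tempered components factoring through the trivial character of the torus) is in substance Qiu's theorem, so you would need to cite and use it rather than leave it as an acknowledged obstacle; likewise the Eichler--Zagier/Manickam--Ramakrishnan description of the CAP space that you lean on for the first assertion is only available as cited for $N=1$ and squarefree $N$, while the proposition is stated for general $N$.

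That said, the arithmetic of your second step is sound: granting a B\"ocherer-type identity for the SK lift, the factorization $L(\tfrac12,\pi\otimes\chi_d)=L(\tfrac12,\pi_0\otimes\chi_d)L(1,\chi_d)L(0,\chi_d)$ together with $L(1,\chi_d)L(0,\chi_d)=4\pi h_K^2/(w_K^2\sqrt{|d|})$ and $|a(F,S)|=|B(F,1_K)|/h_K$ does yield the exponent $\tfrac{k}{2}-\tfrac34$, matching the statement. Note also that in the squarefree-level situation where your classical lifting argument applies, the detour through Bessel periods and the class number formula is unnecessary: once $a(F,S)=c_h(|d|)$ for a half-integral weight form $h$ of weight $k-\tfrac12$, Kohnen's explicit Waldspurger formula (in the spirit of Proposition \ref{p:walds}, or \cite{Kohnen85}) gives $|c_h(|d|)|^2\ll_h |d|^{k-\frac32}L(\tfrac12,\pi_0\otimes\chi_d)$ directly. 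So the two viable completions of your argument are either this classical Waldspurger route (restricted to levels where the SK correspondence is set up) or Qiu's period formula (which is what the paper does, and which handles general $N$ and both assertions at once, unconditionally).
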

\begin{proof}An analogue of the Gan--Gross--Prasad conjecture for representations of Saito-Kurokawa type was formulated and proved by Qiu in \cite{qiu}. In particular, we have $B(F, \Lambda)=0$ unless $\Lambda$ is the trivial character, which proves the assertion that the Fourier coefficients of $F$ at fundamental matrices depend only on the discriminant. For $\Lambda$ trivial, the desired inequality follows in a similar fashion as in the proof of Proposition \ref{p:ggpimpliesG}.
\end{proof}

\subsection{The proof of Theorem C}
In this subsection we restate and prove Theorem C, assuming the main result of Section \ref{s:fractional}.

\begin{theorem}\label{thm:upperbd}Let $k >2$ be even  and $N\ge 1$ be  squarefree. \begin{enumerate}
\item Assume that the GRH holds for each of the $L$-functions $L(s, \pi \times \AI(\Lambda))$, $L(s, \ad(\pi) \times \AI(\Lambda^2))$, $L(s, \ad(\pi) \times \chi_d)$, $L(s, \std(\pi) \times \AI(\Lambda^2))$, $L(s, \std(\pi) \times \chi_d)$, $L(s, \sigma \times \chi_d)$ and $L(s,\chi_d)$, where $\pi$ is any automorphic representation of $\GSp_4(\A)$ that arises from $S_k(\Gamma_0^{(2)}(N))^{\rm G}$, $\sigma$ is any cuspidal automorphic representation of $\GL_2(\A)$, $K=\Q(\sqrt{d})$ is an imaginary quadratic field with associated Dirichlet character $\chi_d$, and $\Lambda$ is an ideal class character of $K$.

\item  Assume that each $F' \in  S_k(\Gamma^{(2)}_0(N))^{ \rm G}$ that gives rise to an irreducible representation  satisfies Hypothesis G \up{by Proposition \ref{p:ggpimpliesG} this is implied by the refined GGP conjecture}.
\end{enumerate}

Then, given any $F \in S_k(\Gamma^{(2)}_0(N))$ we have for fundamental matrices $S$, \begin{equation}\label{e:toprove}|a(F,S)| \ll_{F, \eps} \frac{|\disc(S)|^{\frac{k}2 - \frac{1}{2}}}{ (\log |\disc(S)|)^{\frac18 - \eps}}.\end{equation}
\end{theorem}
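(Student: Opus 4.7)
The plan is to reduce to factorizable forms using the orthogonal decomposition \eqref{e:decompositionpackets} and then bound the fundamental Fourier coefficients separately according to the Arthur type. First, write $F$ as a finite linear combination $F = \sum_j c_j F_j$ where each $F_j$ is a factorizable cusp form in $S_k(\Gamma^{(2)}_0(N))$ giving rise to an irreducible automorphic representation $\pi_j$ of one of the three types: Saito--Kurokawa, Yoshida, or general. By the triangle inequality it suffices to prove \eqref{e:toprove} for each $F_j$ separately, with implied constant depending on $F_j$.

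For each $F_j$ of Saito--Kurokawa type, Proposition \ref{p:saito-kurokawa} (combined with the factorization $L^N(s,\pi_j)=L^N(s,\pi_{0,j})\zeta^N(s+\tfrac12)\zeta^N(s-\tfrac12)$) yields $|a(F_j,S)| \ll_{F_j} |d|^{k/2-3/4} L(\tfrac12, \pi_{0,j}\otimes\chi_d)^{1/2}$, where $d = \disc(S)$. Under GRH the $L$-value is $\ll_{\eps} |d|^\eps$, giving a bound \emph{much stronger} than \eqref{e:toprove}. For each $F_j$ of Yoshida or general type, we combine the inversion formula \eqref{e:invertedfund} with Hypothesis G (which holds by Corollary \ref{c:yoshidahyp} in the Yoshida case and by Proposition \ref{p:ggpimpliesG} in the general case) to obtain
\[
|a(F_j,S)| \;\le\; \frac{1}{|\Cl_K|}\sum_{\Lambda \in \widehat{\Cl_K}} |B(F_j,\Lambda)|
\;\ll_{F_j}\; |d|^{\frac{k}{2}-\frac12}\cdot\frac{1}{|\Cl_K|}\sum_{\Lambda \in \widehat{\Cl_K}} \sqrt{L(\tfrac12,\pi_j\times \AI(\Lambda))},
\]
where $K=\Q(\sqrt{d})$. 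This reduces matters to bounding the fractional first moment of the central $L$-values on the right-hand side.

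The key input is the bound
\[
\frac{1}{|\Cl_K|}\sum_{\Lambda \in \widehat{\Cl_K}} \sqrt{L(\tfrac12,\pi_j\times \AI(\Lambda))} \;\ll_{F_j,\eps}\; \frac{1}{(\log |d|)^{\frac18-\eps}},
\]
proved (under the assumed GRH hypotheses) in Section \ref{s:fractional} via Soundararajan's method for moments of $L$-functions. The hypothesized GRH for $L(s,\ad(\pi_j)\times \AI(\Lambda^2))$, $L(s,\ad(\pi_j)\times\chi_d)$, $L(s,\std(\pi_j)\times \AI(\Lambda^2))$, $L(s,\std(\pi_j)\times\chi_d)$ and so on is precisely what is needed to control the relevant sums over primes that arise in the log-moment generating function, as sketched after \eqref{e:invertedggp}. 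Holomorphy of all these $L$-functions is guaranteed by Remark \ref{r:holofunctions} together with Lemma \ref{l:adjprops}. Combining this fractional moment estimate with the displayed inequality above produces the required bound \eqref{e:toprove} for $F_j$, and summing over $j$ completes the proof.

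The main obstacle is the fractional first moment bound itself: a naive Cauchy--Schwarz combined with the Lindel\"of hypothesis only yields $|d|^\eps$ for the average, hence $|a(F,S)| \ll |d|^{k/2-1/2+\eps}$, which is \emph{weaker} than \eqref{e:toprove}. Extracting the logarithmic saving $(\log|d|)^{-1/8+\eps}$ requires a careful analysis of the distribution of $\log L(\tfrac12,\pi_j\times \AI(\Lambda))$ as $\Lambda$ varies, modelling the Dirichlet coefficients $b_{\AI(\Lambda)}(p)$ at split primes by the random variable $X_p+X_p^{-1}$ with $X_p$ uniform on the unit circle, and rigorously upper bounding the resulting moment generating function via Soundararajan's technique. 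The loss of $(\log|d|)^\eps$ arises from the sub-optimal handling of primes close to $|d|$, and the remaining terms are controlled using the GRH-based estimate for $\sum_{p \le |d|} b_{\pi_j}(p)^2 p^{-1} \mathbf{1}_{\left(\frac{d}{p}\right)=1}$.
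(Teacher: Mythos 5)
Your overall architecture matches the paper's: reduce to forms giving rise to irreducible representations, split according to the decomposition \eqref{e:decompositionpackets}, dispatch the Saito--Kurokawa case via Proposition \ref{p:saito-kurokawa} (your use of GRH for $L(\tfrac12,\pi_0\otimes\chi_d)$ is permitted by hypothesis (1), though the paper points out unconditional subconvexity suffices there), and in the remaining cases use \eqref{e:invertedfund} together with Hypothesis G to reduce to a fractional first moment of $L(\tfrac12,\pi_j\times\AI(\Lambda))$ over $\widehat{\Cl_K}$. However, there is a genuine gap in the way you treat the Yoshida case. You apply the moment bound of Section \ref{s:fractional} (Theorem \ref{thm:momentbd}) to \emph{both} Yoshida and general type $\pi_j$, but that theorem and its entire supporting apparatus -- Proposition \ref{p:lfunctionsfacts}, Lemmas \ref{l:basechange} and \ref{l:adjprops}, Remark \ref{r:holofunctions}, and in particular Lemma \ref{lem:squares} -- are stated and proved only for $\pi$ arising from $S_k(\Gamma^{(2)}_0(N))^{\rm G}$, and they use in an essential way that the $\GL_4$ transfer $\Pi_4$ is \emph{cuspidal}. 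For a Yoshida lift one has $\Pi_4=\sigma_1\boxplus\sigma_2$, which is not cuspidal; then $L(s,\std(\pi))=\zeta(s)L(s,\sigma_1\times\sigma_2)$ has a pole at $s=1$, so the estimate $\sum_{p\le x}a_{\std(\pi)}(p)/p=o(\log\log x)$ used in Lemma \ref{lem:squares} fails, the prime-square contribution and the variance $\sum_{(\frac{d}{p})=1}a_{\pi}(p)^2/p$ both change (they essentially double), and the exponent $1/8$ does not come out of that argument as written. (The final bound one expects in the Yoshida case is in fact different -- a saving of $(\log|d|)^{-1/4}$ -- but this needs its own computation.)

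The paper handles this by invoking Theorem 3 of \cite{Blomer-Brumley}, which bounds $\frac{1}{|\Cl_K|}\sum_{\Lambda}\sqrt{L(\tfrac12,\sigma_1\times\AI(\Lambda))\,L(\tfrac12,\sigma_2\times\AI(\Lambda))}$ directly for a pair of $\GL_2$ forms; your proof needs either that external input or a reworked version of the Section \ref{s:fractional} moment computation adapted to the factored $L$-function $L(s,\pi\times\AI(\Lambda))=L(s,\sigma_1\times\AI(\Lambda))L(s,\sigma_2\times\AI(\Lambda))$. A second, minor omission: before invoking Section \ref{s:fractional} you should discard the finitely many fundamental $S$ whose discriminant has all prime factors dividing $N$, since the holomorphy statements of Remark \ref{r:holofunctions} (used throughout that section) require $d$ to be divisible by at least one prime not dividing $N$; the paper makes this reduction explicitly.
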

\begin{remark}\label{r:automorphy}Concerning the automorphy of the various representations appearing above, we refer the reader to the statements and proofs of Proposition~\ref{p:lfunctionsfacts} and Lemma~\ref{l:adjprops}.
\end{remark}
\begin{proof}Since $S_k(\Gamma^{(2)}_0(N))$ has a basis consisting of forms that give rise to irreducible representations, it suffices to prove \eqref{e:toprove} for such forms $F$. Furthermore, we may assume that $d=\disc(S)$ is divisible by at least one prime not dividing $N$, since there are at most finitely many fundamental $S$ without this property. Let $F$ give rise to $\pi$. We consider three cases, corresponding to  \eqref{e:decompositionpackets}.

\emph{Case 1}: \emph{$\pi$ is of Saito-Kurokawa type.} In this case, using Proposition \ref{p:saito-kurokawa}, the desired inequality follows from any subconvexity bound for $L(\tfrac12, \pi_0 \times \chi_d)$ which is known. (In particular, for this case, we do not need to appeal to GRH for $L(s, \pi_0 \times \chi_d)$).

\emph{Case 2}: \emph{$\pi$ is of Yoshida type, i.e., $F \in S_k(\Gamma^{(2)}_0(N))^{\rm Y}$.}
In this case, there are inequivalent representations $\sigma_1$ and $\sigma_2$ of $\GL_2(\A)$ such that $L(\tfrac12, \pi \times \AI(\Lambda)) = L(\tfrac12, \sigma_1 \times \AI(\Lambda)) \ L(\tfrac12, \sigma_2 \times \AI(\Lambda)).$ Using the identity  $a(F,S) = \frac{1}{|\Cl_K|}\sum_{\Lambda \in \widehat{\Cl_K}} B(F, \Lambda) \Lambda^{-1}(S)$ and Corollary \ref{c:yoshidahyp},
we conclude that \[\tst a(F,S) \ll_F |d|^{\frac{k}{2} - \frac12} \frac{1}{|\Cl_K|}\sum_{\Lambda \in \widehat{\Cl_K}} \sqrt{L(\tfrac12, \sigma_1 \times \AI(\Lambda)) \ L(\tfrac12, \sigma_2 \times \AI(\Lambda))}.\] Now the desired conclusion follows from Theorem 3 of \cite{Blomer-Brumley}.

\emph{Case 3}: \emph{$\pi$ is of general type, i.e., $F \in S_k(\Gamma^{(2)}_0(N))^{\rm G}$.}

In this case, using $a(F,S) = \frac{1}{|\Cl_K|}\sum_{\Lambda \in \widehat{\Cl_K}} B(F, \Lambda) \Lambda^{-1}(S)$ and Hypothesis G, we get \[\tst a(F,S) \ll_F |d|^{\frac{k}{2} - \frac12} \frac{1}{|\Cl_K|}\sum_{\Lambda \in \widehat{\Cl_K}} \sqrt{L(\tfrac12, \pi \times \AI(\Lambda))}.\] Now the desired conclusion follows from Theorem \ref{thm:momentbd} which will be proved in the next section.
\end{proof}

\section{Fractional moments of central $L$-values} \label{s:fractional}

\subsection{Statement of result}
Throughout this section, we let $\pi$ denote an irreducible cuspidal automorphic representation of $\GSp_4(\A)$ that arises from $S_k(\Gamma_0(N))^{\rm G}$ (see Section \ref{s:lfunctionssiegel}). Assume that $N\ge 1$ is squarefree  and $k \ge 2$. Let $K=\mathbb Q(\sqrt{d})$ be a quadratic field such that $d<0$ is a fundamental discriminant divisible by at least one prime not dividing $N$ and let $\tmop{Cl}_K$ denote the ideal class group of $K$. Also, given $\Lambda \in \widehat{ \tmop{Cl}_K}$ we let $\AI(\Lambda)$ denote the automorphic representation of $\GL_2(\A)$ given by automorphic induction; it is generated by the theta series $\Theta_{\Lambda}(z) = \sum_{ 0 \neq \mathfrak a \subset \mathcal O_K} \Lambda(\mathfrak a) e(N(\mathfrak a) z)
.$
Our assumptions imply that the $L$-functions $L(s, \pi \times \AI(\Lambda))$, $L(s, \ad(\pi) \times \AI(\Lambda^2))$, $L(s, \ad(\pi) \times \chi_d)$, $L(s, \std(\pi) \times \AI(\Lambda^2))$, $L(s, \std(\pi) \times \chi_d)$ are all entire (see Remark \ref{r:holofunctions}). Using Soundararajan's method \cite{Soundararajan} for bounding moments of $L$-functions we will prove the following result.
\begin{theorem} \label{thm:momentbd}
Assume GRH for each of the $L$-functions $L(s, \pi \times \AI(\Lambda))$, $L(s, \ad(\pi) \times \AI(\Lambda^2))$, $L(s, \ad(\pi) \times \chi_d)$, $L(s, \std(\pi) \times \AI(\Lambda^2))$, $L(s, \std(\pi) \times \chi_d)$, and $L(s,\chi_d)$. Then
\[
\frac{1}{|\tmop{Cl}_K|}  \sum_{\Lambda \in \widehat{ \tmop{Cl}_K}} \sqrt{ L(\tfrac12, \pi \times \AI(\Lambda))} \ll_{\pi,\varepsilon} \frac{1}{(\log |d|)^{\frac18-\varepsilon}}.
\]
\end{theorem}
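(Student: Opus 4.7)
The plan is to adapt Soundararajan's GRH-based method \cite{Soundararajan} for moments of $L$-functions to the class group character average setting, making rigorous the heuristic of Section \ref{s:fractionalintro}. The starting point is Soundararajan's fundamental inequality: under GRH, since the analytic conductor of $L(s,\pi\times\AI(\Lambda))$ is $O_\pi(|d|^4)$, for every $\Lambda\in\widehat{\tmop{Cl}_K}$ and any $x\ge 2$,
\[
\log L(\tfrac12,\pi\times\AI(\Lambda))\le\Re\sum_{p\le x}\frac{\lambda_\pi(p)\lambda_{\AI(\Lambda)}(p)}{\sqrt p}\,\frac{\log(x/p)}{\log x}+\tfrac12\sum_{p\le\sqrt x}\frac{\mathrm{tr}(A_\pi(p)^2)\mathrm{tr}(A_{\AI(\Lambda)}(p)^2)}{p}\,\frac{\log(x/p^2)}{\log x}+\frac{4\log|d|}{\log x}+O(1).
\]
Choosing $x$ a small power of $|d|$ (or employing a hybrid Heap--Soundararajan truncation that absorbs the $(\log|d|)^\varepsilon$ loss appearing in the final estimate) renders the conductor term an absolute constant.

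The prime-square sum will be evaluated via the identities $\mathrm{tr}(A_\pi(p)^2)=\lambda_{\ad(\pi)}(p)-1-\lambda_{\std(\pi)}(p)$ and $\mathrm{tr}(A_{\AI(\Lambda)}(p)^2)=\lambda_{\AI(\Lambda^2)}(p)+1-\chi_d(p)$ at unramified $p$, obtained from the $\Sym^2\oplus\wedge^2$ decomposition together with $\wedge^2\rho_4=\mathbf{1}\oplus\rho_5$. Expanding the product yields a fixed linear combination of Dirichlet coefficients of $L(s,\ad(\pi)\times\AI(\Lambda^2))$, $L(s,\ad(\pi)\times\chi_d)$, $L(s,\std(\pi)\times\AI(\Lambda^2))$, $L(s,\std(\pi)\times\chi_d)$, $L(s,\AI(\Lambda^2))$, $L(s,\chi_d)$, $\zeta(s)$, and their untwisted counterparts. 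All except $\zeta$ are entire by Remark \ref{r:holofunctions}, so under GRH their partial sums evaluate to $O(1)$ via standard Mertens-type estimates; only the $\zeta$-factor, which enters with coefficient $-1$ in the expansion, produces a non-trivial logarithmic contribution $-\tfrac12\log\log|d|+O(1)$ uniformly in $\Lambda$. Exponentiating and taking the square root,
\[
\sqrt{L(\tfrac12,\pi\times\AI(\Lambda))}\ll(\log|d|)^{-1/4}\exp\!\left(\tfrac12\Re\sum_{p\le x}\frac{\lambda_\pi(p)\lambda_{\AI(\Lambda)}(p)}{\sqrt p}\,\frac{\log(x/p)}{\log x}\right).
\]

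To average the exponential over $\Lambda$, I would expand the Taylor series of $\exp$ and invoke orthogonality of $\widehat{\tmop{Cl}_K}$. Writing $\lambda_{\AI(\Lambda)}(p)=\Lambda(\mathfrak p)+\overline{\Lambda(\mathfrak p)}$ at split primes (inert primes contribute nothing, ramified primes are negligible), orthogonality preserves only those multi-index terms whose associated ideal $\prod_i\mathfrak p_i^{a_i}\overline{\mathfrak p}_i^{b_i}$ lies in the principal class. Provided the total norm stays below $|d|^{1-\varepsilon}$, this collapses precisely to the Steinhaus random model in which $\{\Lambda(\mathfrak p)\}_{p\,\text{split}}$ behave as i.i.d.\ uniform unit-circle variables. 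A saddle-point evaluation of the Laplace transform of $\sum_{p\le x,(d/p)=1}\lambda_\pi(p)(X_p+X_p^{-1})/\sqrt p$ then bounds its moment generating function by
\[
\exp\!\left(\tfrac14\sum_{\substack{p\le x\\(d/p)=1}}\frac{\lambda_\pi(p)^2}{p}+O(1)\right).
\]
GRH for the factors of $L(s,\pi\times\pi)=\zeta(s)L(s,\std(\pi))L(s,\ad(\pi))$ and for its $\chi_d$-twist yields $\sum_{p\le x,(d/p)=1}\lambda_\pi(p)^2/p=\tfrac12\log\log|d|+O(1)$, producing an MGF of size $(\log|d|)^{1/8+o(1)}$. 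Combined with the $(\log|d|)^{-1/4}$ prefactor this delivers the desired $(\log|d|)^{-1/8+\varepsilon}$.

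The main obstacle is to make the reduction to the Steinhaus random model rigorous: controlling the contribution of Taylor-expansion multi-indices whose underlying ideal is non-principal, and treating primes $\sqrt x<p\le x$ where Soundararajan's inequality is suboptimal. The former can be addressed via explicit-formula bounds on $\sum_\Lambda\Lambda(\mathfrak a)$ under GRH, together with a careful count of the ``accidental'' principal combinations; the latter is precisely the source of the $(\log|d|)^\varepsilon$ loss in the statement. The various GRH hypotheses listed in Theorem~\ref{thm:momentbd} (together with the additional GRH for $L(s,\std(\pi))$ and $L(s,\ad(\pi))$ implicit in GRH for the Selberg class) are tailored to justify each Mertens-type estimate appearing in this scheme.
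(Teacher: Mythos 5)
Your first half coincides with the paper's argument: the GRH upper bound of Soundararajan--Chandee for $\log L(\tfrac12,\pi\times\AI(\Lambda))$, the identities $a_{\pi}(p^2)=a_{\ad(\pi)}(p)-a_{\std(\pi)}(p)-1$ and $a_{\AI(\Lambda)}(p^2)=a_{\AI(\Lambda^2)}(p)+1-\chi_d(p)$, the use of GRH for the listed twisted $L$-functions to show the prime-square sum contributes $-\tfrac12\log\log|d|$, and the computation $\sum_{p\le x,\,(\frac dp)=1}a_{\pi}(p)^2/p=\tfrac12\log\log x\,(1+o(1))$ are precisely Lemmas \ref{lem:chandee} and \ref{lem:squares} (the paper gets $O(\log\log\log|d|)$ rather than $O(1)$ for the twisted Mertens sums, which is all that is needed).

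The gap is in the averaging step. You propose to bound $|\Cl_K|^{-1}\sum_\Lambda\exp\bigl(\tfrac12 P(\Lambda;x)\bigr)$ with $x$ a fixed power of $|d|$ by Taylor-expanding the exponential and invoking orthogonality, ``provided the total norm stays below $|d|^{1-\varepsilon}$''. But the exponential contains terms of every order, hence ideals $\prod_i\mathfrak p_i^{a_i}\overline{\mathfrak p}_i^{b_i}$ of norm vastly exceeding $|d|$, and for those orthogonality does \emph{not} reduce to the Steinhaus model: principal ideals that are not generated by rational integers proliferate once the norm exceeds $|d|/4$, and their contribution is exactly the ``accidental principal combinations'' you defer. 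Your proposed remedy, ``explicit-formula bounds on $\sum_\Lambda\Lambda(\mathfrak a)$ under GRH'', is vacuous --- that sum equals $|\Cl_K|$ or $0$ exactly by orthogonality --- so the real difficulty (counting principal $\mathfrak a$ of large norm of the relevant multiplicative shape) is untouched, and the saddle-point/MGF evaluation is not justified. The paper never exponentiates a fixed-length polynomial: it bounds, for each $V$, the measure $A_K(V;x)$ of characters with $P(\Lambda;x)>V$ using $2\ell$-th moments with the $V$-dependent length $x=|d|^{1/(\epsilon V)}$ and $\ell$ chosen so that $x^{\ell}<\sqrt{|d|}/2$; then every ideal occurring has norm $<|d|/4$, principality forces it to be a rational square, and the diagonal moment bound of Lemma \ref{lem:momentbds} is exact (with ramified primes and the range $z<p\le x$ split off as $P_3$, $P_2$ in Lemma \ref{lem:largedev}). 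The theorem then follows by integrating $e^{V/2}$ against this distribution after shifting by $\tfrac12\log\log|d|$, giving $(\log|d|)^{-1/4+1/8+\varepsilon}$. A direct exponential-average route does exist in the literature (Harper/Heap--Soundararajan type arguments with carefully truncated exponentials over dyadic prime ranges), but that machinery is not sketched in your proposal; as written, the crucial averaging step would fail.
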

Related moment estimates for families of $L$-functions have been established in \cite{Milinovich-TB, Lester-Radziwill, Huang-Lester, Blomer-Brumley}. Since we assume GRH, we immediately get that $L(\tfrac12, \pi \times \AI(\Lambda)) \ge 0$,  which incidentally also follows from the refined Gan--Gross--Prasad identity using \eqref{e:ggpineq}. However the non-negativity of $L(\tfrac12, \pi \times \AI(\Lambda))$ is known unconditionally due to \cite[Theorem 1.1]{lapid03}, so the square root of the central value is unambiguous, independent of the truth of any unproven hypothesis.


\subsection{Local computation}

For $\tmop{Re}(s)>1$ the $L$-function $L(s,\pi)$ is given by
\[ \tst
L(s, \pi)= \prod_{p} L(s,\pi_p), \qquad L(s,\pi_p)= \left(1-\frac{\alpha_p}{p^s} \right)^{-1}  \left(1-\frac{\alpha_p^{-1}}{p^s} \right)^{-1}  \left(1-\frac{\beta_p}{p^s} \right)^{-1}  \left(1-\frac{\beta_p^{-1}}{p^s} \right)^{-1}
\]
 for $p\nmid N$, where $\mathcal S_p(\pi)=\{\alpha_p, \alpha_p^{-1},\beta_p, \beta_p^{-1}\}$ are the Satake parameters of $\pi$
and it is known that $|\alpha_p|= |\beta_p| =1$ by \cite{weissram}.
Also, given $\Lambda \in \widehat{\tmop{Cl}_K}$ we have for $\tmop{Re}(s)>1$
\[ \tst
L(s, \mathcal{AI}(\Lambda))= \prod_{p}  L(s, \mathcal{AI}(\Lambda_p)), \quad
L(s, \mathcal{AI}(\Lambda_p))=\left(1-\frac{\alpha_{\Lambda_p}}{p^s} \right)^{-1} \left(1-\frac{\beta_{\Lambda_p}}{p^s} \right)^{-1}
\]
 for $p \nmid N$, where the Satake parameters $\mathcal S_p(\AI(\Lambda))=\{\alpha_{\Lambda_p}, \beta_{\Lambda_p}\}$ satisfy:
\begin{itemize}
    \item if $\big( \frac{d}{p} \big) =1$ then $p \mathcal O_k=\mathfrak p \overline {\mathfrak p}$ and $\alpha_{\Lambda_p}=\Lambda(\mathfrak p)$, $\beta_{\Lambda_p}=\Lambda(\overline {\mathfrak p})=\overline{\alpha_{\Lambda_p}}$ ;
    \item if $\big( \frac{d}{p} \big) =-1$ then $\alpha_{\Lambda_p}=1$ and $\beta_{\Lambda_p}=-1$;
    \item if $p|d$ then $p \mathcal O_k=\mathfrak p^2$ and  $\alpha_{\Lambda_{p}}=\Lambda(\mathfrak p)$, $\beta_{\Lambda_p}=0$.
\end{itemize}

In the notation above, we have that the
Satake parameters of $L(s, \tmop{std}(\pi))$ and $L(s, \ad(\pi))$
are respectively given by:

\begin{itemize}



\item $\mathcal S_{p}(\tmop{std}(\pi))=\{ 1, \alpha_p\beta_p, \alpha_p \beta_p^{-1}, \alpha_p^{-1} \beta_p, \alpha_p^{-1} \beta_p^{-1}\}$;



\medskip

\item  $\mathcal S_{p}(\ad(\pi)) =
 \{
\alpha_p^2,\alpha_p^{-2}, \beta_p^2, \beta_p^{-2},
 \alpha_p \beta_p,
\alpha_p^{-1} \beta_p,
 \alpha_p\beta_p^{-1},
  \alpha_p^{-1} \beta_p^{-1},1,1\}$.

\end{itemize}
Define for $\star \in \{ \pi, \AI(\Lambda), \tmop{std}(\pi), \tmop{ad}(\pi) \} $ and $p \nmid N$
\[
a_{\star}(p^n)=\sum_{\alpha \in \mathcal S_{p}(\star)} \alpha^n.
\]
Also, for $f,g \in \{ \pi, \AI(\Lambda), \tmop{std}(\pi), \tmop{ad}(\pi) \}$ let $a_{f \times g}(p^n)=a_{f}(p^n)a_{g}(p^n)$ and $a_{f \times \chi_d}(p^n)=a_{f}(p^n) \chi_d(p^n)$.

We now make the following observations. First, using that $|\alpha_p|=|\beta_p|=1$ it immediately follows that for any $n \ge 1$ and $p \nmid N$
\begin{equation} \label{eq:ramanujan}
|a_{\pi \times \AI(\Lambda)}(p^n) | \le 8.
\end{equation}
Additionally, observe for $p\nmid N$ that
\[
\begin{split}
a_{\tmop{ad}(\pi)}(p)&= \alpha_p^2+\alpha_p^{-2}+\beta_p^{2}+\beta_p^{-2}+a_{\tmop{std}(\pi)}(p)+1.
\end{split}
\]
Hence, for $p \nmid N$
\begin{equation} \label{eq:Fsquare}
    a_{\pi}(p^2)= a_{\tmop{ad}(\pi)}(p) -a_{\tmop{std}(\pi)}(p)-1.
\end{equation}
Also, for $p \nmid N$
\begin{equation} \label{eq:psquare}
\begin{split}
    a_{\pi}(p)^2=&\alpha_p^2+\alpha_p^{-2}+\beta_p^{2}+\beta_p^{-2}+2(\alpha_p+\alpha_p^{-1})(\beta_p+\beta_p^{-1})+4 \\
    =&
    a_{\ad(\pi)}(p)+a_{\std(\pi)}(p)+1.
    \end{split}
\end{equation}
Recall that for $\big(\frac{d}{p} \big)\neq-1$, $\alpha_{\Lambda_p}^2=\Lambda^2(\mathfrak p)=\alpha_{\Lambda_p^2}$. Thus, we get that for $p \nmid N$
\[ \tst
a_{\AI(\Lambda)}(p^2)
= a_{\AI(\Lambda^2)}(p)+\left(\frac{d}{p} \right)^2-\left(\frac{d}{p} \right) = \begin{cases}
 \alpha_{\Lambda_p^2}+\beta_{\Lambda_p^2} & \text{ if } \big( \frac{d}{p} \big) \neq -1,\\
2  & \text{ if } \big( \frac{d}{p} \big) =-1.
\end{cases}
\]
Therefore, combining the above equation with \eqref{eq:Fsquare}
we conclude that
\begin{equation} \label{eq:RSsquare}\tst
a_{\pi \times \AI(\Lambda)}(p^2)=(a_{\tmop{ad}(\pi)}(p) -a_{\tmop{std}(\pi)}(p)-1) \left(a_{\AI(\Lambda^2)}(p)+\left( \frac{d}{p}\right)^2-\left(\frac{d}{p} \right) \right).
\end{equation}

\subsection{Preliminary lemmas} Soundararajan's method for moments of $L$-functions starts with a remarkable bound for the central $L$-value given in terms of a Dirichlet polynomial supported on prime powers. This inequality has been generalized by Chandee \cite{Chandee}.

\begin{lemma} \label{lem:chandee}
Assume GRH for $L(s,\pi \times \AI(\Lambda))$. There exists $C_0>1$, which depends at most on $N,k$ such that for $x \ge 2$
\[
\log L(\tfrac12, \pi \times \AI(\Lambda)) \le \sum_{\substack{p^n \le x \\ p \nmid N}} \frac{a_{\pi \times \AI(\Lambda)}(p^n)}{n p^{\frac{n}{2}(1+1/\log x)}} + \frac{C_0 \log |d|}{\log x}.
\]
\end{lemma}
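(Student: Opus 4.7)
The plan is to derive this bound by applying Soundararajan's upper bound \cite{Soundararajan}, in the form established by Chandee \cite{Chandee} for $L$-functions in the Selberg class satisfying GRH, to the Rankin--Selberg $L$-function $L(s, \pi \times \AI(\Lambda))$. I would first verify the required analytic prerequisites for this degree-$8$ $L$-function: entirety follows from Remark \ref{r:holofunctions} (which uses the standing hypothesis that $d$ has a prime factor outside $N$); the Euler product of degree $8$ satisfies the Ramanujan bound at unramified primes (by Weissauer's theorem for $\pi$ and unitarity of the Satake parameters of $\AI(\Lambda)$ recorded earlier in this section); and the standard functional equation $s \mapsto 1-s$ follows from Proposition \ref{p:lfunctionsfacts}(i) together with Rankin--Selberg theory for $\GL_4 \times \GL_2$, via the identity $L(s, \pi \times \AI(\Lambda)) = L(s, \Pi_4 \times \AI(\Lambda))$.

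Next I would record a bound on the analytic conductor $\mathfrak{q}$ of $L(s, \pi \times \AI(\Lambda))$. Since $\AI(\Lambda)$ has arithmetic conductor comparable to $|d|$ and $\pi$ has fixed conductor dividing a power of $N$, the standard estimates on Rankin--Selberg conductors yield $\log \mathfrak{q} = O_{\pi, N, k}(\log |d|)$, uniformly in $\Lambda$ as $\Lambda$ varies over $\widehat{\Cl_K}$. Applying the Soundararajan--Chandee inequality at the central point then produces, under GRH for $L(s, \pi \times \AI(\Lambda))$,
\[
\log L(\tfrac12, \pi \times \AI(\Lambda)) \le \sum_{p^n \le x} \frac{a_{\pi \times \AI(\Lambda)}(p^n)}{n p^{(n/2)(1+1/\log x)}} + O\!\left(\frac{\log \mathfrak{q}}{\log x}\right)
\]
for any $x \ge 2$. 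Note that if $L(\tfrac12, \pi \times \AI(\Lambda)) = 0$ the claim is vacuous; otherwise $L(\tfrac12, \pi \times \AI(\Lambda)) > 0$ unconditionally by \cite{lapid03}, so the logarithm on the left is well-defined.

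Finally, to restrict the Dirichlet sum to $p \nmid N$ as in the stated bound, I would separate off the contribution from the finitely many ramified primes $p|N$. Since the underlying local parameters at these primes are bounded uniformly (because $\pi$ and $N$ are fixed), the omitted portion of the sum contributes $O_{\pi, N}(1)$, which is absorbed into the error term after dividing by $\log x$. Combined with the estimate on $\log \mathfrak{q}$, this yields the inequality with $C_0$ depending only on $N$, $k$, and the fixed data of $\pi$. The main technical hurdle is verifying that the analytic hypotheses of the Chandee--Soundararajan machinery transfer cleanly to this degree-$8$ Rankin--Selberg setting with implicit constants genuinely uniform in $\Lambda$; this however is routine since both the analytic conductor estimate and the bound on Satake parameters of $\AI(\Lambda)$ depend on $\Lambda$ only through the field $K$, and hence only through $|d|$.
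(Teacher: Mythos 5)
Your proposal takes essentially the same route as the paper: the paper's entire proof is a one-line appeal to \cite[Theorem 2.1]{Chandee} (the Soundararajan-type main inequality), with the conductor bound $\log\mathfrak{q}\ll_{\pi,N,k}\log|d|$ and the finitely many ramified primes $p\mid N$ absorbed into the constant $C_0$, exactly as you describe. Your additional verifications (entirety via Remark \ref{r:holofunctions}, the Ramanujan bound at unramified primes, the functional equation through the lift $\Pi_4$ and Rankin--Selberg theory) are precisely the facts the paper has already established in Section \ref{s:lfunctionssiegel}, so the argument is correct and essentially identical.
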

\begin{remark} \label{rem:upperbd}
Taking $x= \log |d|$ and using \eqref{eq:ramanujan} it follows that
\[ \tst
L(\tfrac12, \pi \times \AI(\Lambda))  \ll \exp\left(2C_0 \frac{\log |d|}{\log \log |d|} \right).
\]
\end{remark}

\begin{proof}
This follows immediately from \cite[Theorem 2.1]{Chandee}.
\end{proof}

To analyze the above sum over prime powers, we will separately consider the primes, squares of primes, and higher prime powers. The contribution of the squares of primes is estimated in the following result.

\begin{lemma} \label{lem:squares}
Assume GRH. Then for $x \ge 2$ each of the following hold:
\begin{itemize}
    \item[i)] $\displaystyle \sum_{\substack{p \le \sqrt{x} \\ p \nmid N}} \frac{a_{\pi \times \AI(\Lambda)}(p^2)}{p^{1+2/\log x}} \frac{\log x/p}{\log x}=-\log \log x(1+o(1))+O_{\pi} \left( \log \log \log |d|\right);$
    \item[ii)] $\displaystyle \sum_{\substack{p \le x \\ p\nmid N, \left( \frac{d}{p}\right)=1 }} \frac{a_{\pi}(p)^2}{p}= \frac12 \log \log x(1+o(1))+O_{\pi}(\log \log \log |d|).$
\end{itemize}
\end{lemma}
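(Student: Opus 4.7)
The plan is to expand the eigenvalues using the algebraic identities \eqref{eq:RSsquare} and \eqref{eq:psquare} and then to evaluate each resulting weighted prime sum via a GRH-based prime number theorem. The key analytic input I will invoke is the following: if $L(s,\sigma)$ is one of the entire $L$-functions appearing in the hypothesis of the theorem, then under GRH one has
$$\sum_{p \le y} \frac{a_\sigma(p)}{p} = \log L(1,\sigma) + O_\pi\!\left(\tfrac{1}{\log y}\right)$$
uniformly for $y \ge 2$, while Littlewood's bound under GRH gives $\log L(1,\sigma) = O_\pi(\log\log\log|d|)$ whenever $\sigma$ contains a twist by $\chi_d$ or $\AI(\Lambda^2)$. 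All the $L$-functions that will arise in the expansions ($L(s,\ad(\pi))$, $L(s,\std(\pi))$, $L(s,\AI(\Lambda^2))$, together with their twists by $\chi_d$ and their tensor products with $\ad(\pi)$ or $\std(\pi)$) are entire in our setting by Remark \ref{r:holofunctions}.

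For part (i), \eqref{eq:RSsquare} specializes, for $p \nmid Nd$, to
$$a_{\pi \times \AI(\Lambda)}(p^2) = \bigl(a_{\ad(\pi)}(p) - a_{\std(\pi)}(p) - 1\bigr)\bigl(a_{\AI(\Lambda^2)}(p) + 1 - \chi_d(p)\bigr);$$
the finitely many primes $p \mid d$ excluded will contribute at most $\sum_{p \mid d} 1/p = O(\log\log\log|d|)$ to the weighted sum (using the well-known maximal order of $\sum_{p\mid n} 1/p$ at $n = d$). I will then expand the product into nine terms. The ``constant'' term $(-1)(1)$ will produce the main contribution, namely
$$-\sum_{\substack{p \le \sqrt x\\ p \nmid Nd}} \frac{1}{p^{1+2/\log x}} \cdot \frac{\log(x/p)}{\log x} = -\log\log x + O(1),$$
which I can verify by partial summation and the prime number theorem: substituting $v = \log p/\log x$ identifies the leading divergence with $\int_{\log 2/\log x}^{1/2} dv/v = \log\log x + O(1)$, since the remainder $\bigl((1-v)e^{-2v}-1\bigr)/v = -3 + O(v)$ is bounded near $v=0$. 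Each of the remaining eight terms involves at least one non-trivial Dirichlet coefficient and I can handle it via the GRH estimate above, for a total error of $O_\pi(\log\log\log|d|)$.

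For part (ii), applying \eqref{eq:psquare} together with the identity $1_{(d/p)=1} = \tfrac12(1+\chi_d(p))$ for $p \nmid d$ will reduce the sum (modulo an $O(\log\log\log|d|)$ error from primes $p \mid d$) to six terms. Only the ``$1$'' piece $\tfrac12\sum_{p\le x,\,p\nmid Nd} 1/p = \tfrac12\log\log x + O(1)$ (by Mertens' theorem) will contribute to the main term; the other five are all controlled by the GRH estimate applied to $L(s,\chi_d)$, $L(s,\ad(\pi))$, $L(s,\std(\pi))$, $L(s,\ad(\pi)\otimes\chi_d)$, and $L(s,\std(\pi)\otimes\chi_d)$, each contributing $O_\pi(\log\log\log|d|)$.

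The main technical obstacle I expect is establishing the uniform $O_\pi(\log\log\log|d|)$ dependence in the prime number theorem above: this requires combining the explicit formula for $\log L(s,\sigma)$ under GRH (which controls the partial sums $\sum_{p \le y} a_\sigma(p)/p$ up to a small error) with Littlewood's classical GRH bound $|L(1,\sigma)| \ll (\log\log|d|)^{O(1)}$ for each of the $L$-functions involved, so that $\log L(1,\sigma) = O_\pi(\log\log\log|d|)$. Once this is in hand, the rest of the argument is bookkeeping.
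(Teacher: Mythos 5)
Your proposal is essentially correct and reaches the stated conclusions, but it is packaged differently from the paper's argument, and two points deserve flagging. For the sums involving a genuine twist (by $\AI(\Lambda^2)$ or $\chi_d$), the paper does not pass through $\log L(1,\sigma)$ at all: it discards the primes $p\le(\log|d|)^3$ trivially (these already account for the $O(\log\log\log|d|)$ error) and bounds the remaining range by $O_\pi(1)$ via \cite[Lemma 5]{Blomer-Brumley}, using that the relevant $L$-functions lie in the extended Selberg class (Proposition \ref{p:lfunctionsfacts}, Remark \ref{r:holofunctions}). Your route -- GRH prime number theorem plus Littlewood-type bounds at $s=1$ -- is morally the same and works, but your key estimate is overstated: the asymptotic $\sum_{p\le y}a_\sigma(p)/p=\log L(1,\sigma)+O_\pi(1/\log y)$ cannot hold uniformly for all $y\ge 2$ with a $d$-independent error, since the GRH error term necessarily involves the analytic conductor, which here is a power of $|d|$ (and, for $y$ bounded, the left side is $O(1)$ while $\log L(1,\sigma)$ can have size $\log\log\log|d|$). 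Because you only evaluate at $y=\sqrt{x}$ or $y=x$ and only need $O(\log\log\log|d|)$, this is repairable either by stating the GRH estimate with its conductor-dependent error (valid once $y$ exceeds a fixed power of $\log|d|$, the smaller $x$ being trivial) or by splitting off the small primes exactly as the paper does; but as written the ``uniformly for $y\ge2$'' claim is false.

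The second, more structural difference concerns the untwisted terms $\sum_p a_{\ad(\pi)}(p)/p$ and $\sum_p a_{\std(\pi)}(p)/p$. You control these by assuming GRH for $L(s,\ad(\pi))$ and $L(s,\std(\pi))$, which is not among the hypotheses the paper actually uses (see the remark following the lemma and the hypothesis list of Theorem \ref{thm:momentbd}): the paper treats these sums unconditionally, using the non-vanishing of $L(s,\ad(\pi))$ and $L(s,\std(\pi))$ on $\Re(s)=1$ (Proposition \ref{p:lfunctionsfacts}) together with a Tauberian theorem \cite[Theorem 1]{KacPer03}, which yields only $o(\log\log x)$ -- and this is precisely why the main terms in the lemma carry the factor $(1+o(1))$ rather than a clean $O_\pi(\log\log\log|d|)$ error. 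Under your enlarged GRH assumption the conclusion you get is in fact slightly stronger than the lemma, so nothing breaks; but if you want to prove the lemma under the paper's stated hypotheses you should replace that step by the unconditional zero-free-line argument. Everything else -- the use of \eqref{eq:RSsquare} and \eqref{eq:psquare}, the detection $1_{(\frac{d}{p})=1}=\frac12(1+\chi_d(p))$ for $p\nmid d$, the $O(\log\log\log|d|)$ contribution of $p\mid d$, and the partial-summation evaluation of the weighted Mertens sum giving the main term -- matches the paper's computation.
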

\begin{remark}
We assume GRH holds for $L(s,\chi_d),L(s, \pi \times \AI(\Lambda))$, $L(s, \ad(\pi) \times \AI(\Lambda^2))$, $L(s, \ad(\pi) \times \chi_d)$, $L(s, \std(\pi) \times \AI(\Lambda^2))$, and $L(s, \std(\pi) \times \chi_d)$.
\end{remark}
\begin{proof}

Using that $|\alpha_p|=|\beta_p|=1$, we get for $x \ge (\log |d|)^3$
\[ \tst
\sum_{\substack{p \le x \\ p \nmid N}} \frac{a_{\tmop{std}(\pi) \times \chi_d}(p)}{p}=\sum_{\substack{ (\log |d|)^3 \le p \le x  \\ p \nmid N}} \frac{a_{\tmop{std}(\pi) \times \chi_d}(p)}{p}+O(\log \log \log |d|).
\]
By Proposition \ref{p:lfunctionsfacts}, $L(s, \tmop{std}(\pi) \times \chi_d)$ belongs to the extended Selberg class, so that we may use \cite[Lemma 5]{Blomer-Brumley}. Applying this lemma twice it follows that the sum on the r.h.s. is $O_{\pi}(1)$ and so
\[ \tst
\sum_{\substack{p \le x \\ p \nmid N}} \frac{a_{\tmop{std}(\pi)}(p) \left(\frac{d}{p}\right)}{p} \ll_{\pi} \log \log \log |d|.
\]
Repeating the argument above and recalling Remark \ref{r:holofunctions} it follows that for $\star \in \{ \tmop{ad}(\pi) \times \chi_d, \tmop{ad}(\pi) \times \AI( \Lambda^2), \tmop{std}(\pi)\times \AI(\Lambda^2), \AI(\Lambda^2)\}$ that
$
\sum_{\substack{p \le x \\ p\nmid N}} \frac{a_{\star}(p)}{p} \ll\log \log \log |d|.
$
Also, $\sum_{p|d} \frac{1}{p} \ll \log \log \log |d|$ and under GRH we have $\sum_{p \le x} \frac{\chi_d(p)}{p} \ll \log \log \log |d|$. Finally, using that  $L(s,\tmop{std}(\pi))$ and $L(s,\tmop{ad}(\pi))$ do not vanish on the line $\tmop{Re}(s)=1$, the latter of which follows from Proposition \ref{p:lfunctionsfacts},  we have that (e.g. see \cite[Theorem 1]{KacPer03})
\begin{equation} \notag \tst
\sum_{\substack{p \le x \\ p\nmid N}} \frac{a_{\tmop{adj}(\pi)}(p)}{p} =o(\log \log x), \qquad \sum_{\substack{p \le x \\ p\nmid N}} \frac{a_{\std(\pi)}(p)}{p} =o(\log \log x).
\end{equation}

Therefore,
applying \eqref{eq:RSsquare} and using the estimates above along with partial summation to handle the smooth weight $p^{-2/\log x} \frac{\log x/p}{\log x}$ completes the proof of $i)$.

To prove $ii)$ we rewrite the condition $\big( \frac{d}{p} \big)=1$ as $\frac{1}{2}\big(\big(\frac{d}{p}\big)+1\big)$ for $p \nmid d$, use \eqref{eq:psquare} and argue as above.
We trivially bound the contribution of $p|d$ by
$
\le 16 \sum_{p|d}\frac{1}{p} \ll \log \log \log |d|.
$
\end{proof}

\subsection{Large deviations of Dirichlet polynomials} To bound the frequency which large values of $\log L(\tfrac12, \pi \times \AI(\Lambda))$ occur we will estimate large deviations of the sum over primes in Lemma \ref{lem:chandee} by bounding its moments. This is done in the following fairly standard lemma (e.g. \cite[Lemma 3]{Soundararajan}) whose proof we include for completeness.

\begin{lemma} \label{lem:momentbds}
Let $\{b_p\}_p \subset \mathbb R$ and $\ell \in \mathbb N$. Suppose $x^{\ell} < \sqrt{|d|}/2$. Then each of the following hold:
\begin{itemize}
    \item[i)]$ \displaystyle
\frac{1}{|\tmop{Cl}_K|} \sum_{\Lambda \in \widehat{\tmop{Cl}_K}} \bigg(  \sum_{\substack{p \le x \\ p \nmid d, \, p \nmid N}} \frac{b_p a_{\AI(\Lambda)}(p)}{\sqrt{p}}\bigg)^{2\ell} \le \frac{(2\ell)!}{2^{\ell} \ell!} \bigg( 2 \sum_{\substack{p \le x \\ \left( \frac{d}{p}\right) =1, \, p \nmid N}} \frac{b_p^2}{p}\bigg)^{\ell};
$
    \item[ii)] $\displaystyle \frac{1}{|\tmop{Cl}_K|} \sum_{\Lambda \in \widehat{\tmop{Cl}_K}} \bigg(  \sum_{\substack{p \le x \\ p | d, \, p \nmid N}} \frac{b_p a_{\AI(\Lambda)}(p)}{\sqrt{p}}\bigg)^{2\ell} \le \frac{(2\ell)!}{2^{\ell} \ell!}  \bigg( \sum_{\substack{p \le x \\ p|d, \, p \nmid N}} \frac{b_p^2}{p}\bigg)^{\ell}.$
\end{itemize}

\end{lemma}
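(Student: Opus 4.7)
The plan is to expand the $2\ell$-th power, average over $\widehat{\tmop{Cl}_K}$ via character orthogonality, and invoke the hypothesis $x^\ell<\sqrt{|d|}/2$ to force a rigid diagonal pairing of the prime ideals. Opening the $2\ell$-fold product and substituting $a_{\AI(\Lambda)}(p)=\Lambda(\mathfrak p)+\Lambda(\overline{\mathfrak p})$ for split $p$ (case $(i)$) or $a_{\AI(\Lambda)}(p)=\Lambda(\mathfrak p)$ for $p\mid d$ (case $(ii)$), each term of the expansion takes the form $\Lambda(\mathfrak a)$ with $\mathfrak a=\chi_1\cdots\chi_{2\ell}$ a product of $2\ell$ prime ideals of $\mathcal O_K$ above primes $p_1,\ldots,p_{2\ell}\le x$. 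Character orthogonality then gives
\[
\frac{1}{|\tmop{Cl}_K|}\sum_{\Lambda\in\widehat{\tmop{Cl}_K}}\Lambda(\mathfrak a)=\mathbf{1}[\mathfrak a\text{ is principal}],
\]
so only those configurations with $\mathfrak a$ principal survive.

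The key step is to identify which configurations are actually principal. Since $N(\mathfrak a)=\prod_j p_j\le x^{2\ell}<|d|/4$, if $\mathfrak a=(\alpha)$ then $|N(\alpha)|<|d|/4$. A direct computation of the norm form on $\mathcal O_K$---namely $N(a+b\sqrt m)=a^2+\tfrac{|d|}{4}b^2$ when $d=4m$, and $N\bigl(a+b\tfrac{1+\sqrt d}{2}\bigr)=\bigl(a+\tfrac{b}{2}\bigr)^2+\tfrac{|d|}{4}b^2$ when $d\equiv 1\pmod 4$---shows that any nonzero $\alpha\in\mathcal O_K$ with $N(\alpha)<|d|/4$ must already lie in $\mathbb Z$. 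Hence $\alpha\in\mathbb Z$, and matching $(\alpha)\mathcal O_K=\prod_{p\mid\alpha}(\mathfrak p\overline{\mathfrak p})^{v_p(\alpha)}$ (resp.\ $\prod_{p\mid\alpha}\mathfrak p^{2v_p(\alpha)}$ for $p\mid d$) against $\mathfrak a$ forces: each prime $p$ must appear with even multiplicity $2m_p$ among $p_1,\ldots,p_{2\ell}$, and in case $(i)$ the choices $\chi_j=\mathfrak p_j$ and $\chi_j=\overline{\mathfrak p_j}$ must be made equally often at the positions where $p_j=p$, contributing $\binom{2m_p}{m_p}$ configurations per prime.

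Counting the $(2\ell)!/\prod_p(2m_p)!$ ordered tuples realising a given multiplicity vector $(2m_p)$ with $\sum_p m_p=\ell$, the moment in case $(i)$ equals
\[
(2\ell)!\sum_{(m_p):\sum m_p=\ell}\prod_p\frac{\binom{2m_p}{m_p}}{(2m_p)!}\cdot\frac{b_p^{2m_p}}{p^{m_p}}=(2\ell)!\sum_{(m_p):\sum m_p=\ell}\prod_p\frac{1}{(m_p!)^2}\cdot\frac{b_p^{2m_p}}{p^{m_p}},
\]
which, using the trivial $(m_p!)^2\ge m_p!$ followed by the multinomial theorem, is bounded by $\frac{(2\ell)!}{\ell!}\bigl(\sum_p b_p^2/p\bigr)^\ell=\frac{(2\ell)!}{2^\ell\ell!}\bigl(2\sum_p b_p^2/p\bigr)^\ell$. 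Case $(ii)$ is identical except that no $\binom{2m_p}{m_p}$ factor arises; the elementary inequality $(2m)!\ge 2^m m!$ and the multinomial theorem then yield the claimed $\frac{(2\ell)!}{2^\ell\ell!}\bigl(\sum_p b_p^2/p\bigr)^\ell$.

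The only substantive ingredient is the Minkowski-type reduction to $\alpha\in\mathbb Z$; everything else is routine bookkeeping, and the $(2\ell)!/(2^\ell\ell!)$ factor is precisely the expected Gaussian $2\ell$-th moment coefficient, in line with the heuristic that $\Lambda(\mathfrak p)$ behaves like a uniform random variable on the unit circle.
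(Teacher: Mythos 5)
Your proposal is correct and follows essentially the same route as the paper: expand the $2\ell$-th power, use orthogonality of the class group characters, note that $N(\mathfrak a)\le x^{2\ell}<|d|/4$ forces any principal $\mathfrak a$ to be generated by a rational integer (a step the paper asserts and you justify via the norm form), and then bound the resulting diagonal contribution by the Gaussian moment factor $\frac{(2\ell)!}{2^{\ell}\ell!}$ using elementary inequalities equivalent to the paper's $\nu(m^2)g(m^2)\le\nu(m)$ and $\nu(n^2)\le\nu(n)/2^{\Omega(n)}$. Your multiplicity-vector bookkeeping is just a repackaging of the paper's multiplicative functions $\nu$ and $g$, so the two arguments coincide in substance.
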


\begin{proof}
Let $b_n = \prod_{p^j || n} b_p^j$.
We have that
\begin{equation} \label{eq:expand2}
\frac{1}{|\tmop{Cl}_K|} \sum_{\Lambda \in \widehat{\tmop{Cl}_K}} \bigg(  \sum_{\substack{p \le x \\ p \nmid d, p\nmid N}} \frac{b_p a_{\AI(\Lambda)}(p)}{\sqrt{p}}\bigg)^{2\ell}=
\sum_{\substack{n \ge 1  \\ (n,N)=1}} \frac{b_n}{\sqrt{n}} \sum_{\substack{ \mathfrak p_1 , \ldots, \mathfrak p_{2\ell} \subset \mathcal O_K \\ N(\mathfrak p_1 \cdots \mathfrak p_{2\ell})=n \\ N(\mathfrak p_j) \le x \\ \mathfrak p_j \text{ split }}} \frac{1}{|\tmop{Cl}_K|} \sum_{\Lambda \in \widehat{\tmop{Cl}_K}}
\prod_{j=1}^{2\ell} (\Lambda(\mathfrak p_j)+\Lambda(\mathfrak p_j)^{-1} ).
\end{equation}
For $n \in \mathbb N$ in the sum above write $n=q_1^{e_1} \cdots q_r^{e_r}$ where $q_1, \ldots, q_r$ are distinct primes with $q_j \mathcal O_K=\mathfrak q_j \overline{\mathfrak q_j}$ for each $j=1,\ldots, r$. The inner sum on the r.h.s. above equals
\[\tst
  \sum_{\substack{0 \le f_1\le e_1 }} \cdots \sum_{0 \le f_r \le e_r}\binom{e_1}{f_1} \cdots \binom{e_r}{f_r}  \frac{1}{|\tmop{Cl}_K|} \sum_{\Lambda \in \widehat{\tmop{Cl}_K}} \Lambda\left(  \mathfrak q_1^{f_1} \overline{ \mathfrak q_1}^{e_1-f_1} \cdots  \mathfrak q_r^{f_r} \overline{ \mathfrak q_r}^{e_r-f_r}  \right).
\]
Write $\mathfrak a= \mathfrak q_1^{f_1} \overline{ \mathfrak q_1}^{e_1-f_1} \cdots  \mathfrak q_r^{f_r} \overline{ \mathfrak q_r}^{e_r-f_r}  $. Since $N(\mathfrak a) \le x^{2\ell} < |d|/4$, the inner sum above is zero unless $\mathfrak a=(\alpha)$ and $\alpha \in \mathbb Z$. Hence, the only terms that do not vanish are those with $e_1, \ldots, e_r$ even and $f_j=e_j/2$ for $j=1,\ldots, r$. Define the multiplicative functions $\nu(n)$ by $\nu(p^a)=1/a!$ and $g(n)$ by $g(p^a)=\binom{a}{a/2}$ if $2|a$ and $g(p^a)=0$ otherwise. The r.h.s. of \eqref{eq:expand2} equals
\[
(2\ell!) \sum_{\substack{p|n \Rightarrow  p \le x, \,
p\nmid N \, \& \, \left(\frac{d}{p}\right)=1  \\ \Omega(n)=2\ell}} \frac{b_n \nu(n) g(n)}{\sqrt{n}}  \le \frac{(2\ell!)}{2^{\ell} \ell!} \bigg( 2 \sum_{\substack{p \le x \\ \left(\frac{d}{p}\right)=1, p \nmid N }} \frac{b_p^2}{p}\bigg)^{\ell},
\]
where for the last inequality we used that the sum on the l.h.s. above is supported on squares and $ \tst \nu(m^2)g(m^2)\le \nu(m).$
This completes the proof of $i)$.

Similarly, we have that
\[
\frac{1}{|\tmop{Cl}_K|} \sum_{\Lambda \in \widehat{\tmop{Cl}_K}} \bigg(  \sum_{\substack{p \le x \\ p | d, p \nmid N}} \frac{b_p a_{\AI(\Lambda)}(p)}{\sqrt{p}}\bigg)^{2\ell}=
\sum_{n \ge 1} \frac{b_n}{\sqrt{n}} \sum_{\substack{ \mathfrak p_1 , \ldots, \mathfrak p_{2\ell} \subset \mathcal O_K \\ N(\mathfrak p_1 \cdots \mathfrak p_{2\ell})=n \\ N(\mathfrak p_j) \le x \\ \mathfrak p_j \text{ ramified }}} \frac{1}{|\tmop{Cl}_K|} \sum_{\Lambda \in \widehat{\tmop{Cl}_K}}
 \Lambda(\mathfrak p_1 \cdots \mathfrak p_{2\ell}).
\]
Write $\mathfrak a=\mathfrak p_1 \cdots \mathfrak p_{2\ell}$. The inner sum is zero unless $\mathfrak a=(\alpha)$ with $\alpha \in \mathbb Z$, since $N(\mathfrak a ) < |d|/4$. Consequently, $n$ is a square. The r.h.s. above equals
\[
(2\ell)!\sum_{\substack{p|n \Rightarrow p  \le x, \, p \nmid N \, \&  \, p |d  \\ \Omega(n)=\ell}} \frac{b_n^2}{n} \nu(n^2) \le \frac{(2\ell)!}{2^{\ell} \ell!}   \bigg( \sum_{\substack{p \le x \\ p|d,\, p \nmid N}} \frac{b_p^2}{p}\bigg)^{\ell},
\]
where we used that $\nu(n^2) \le \frac{\nu(n)}{2^{\Omega(n)}}$ in the last step.
This completes the proof of $ii)$.
\end{proof}

For $x \ge 2$, let
$
P(\Lambda;x)= \sum_{\substack{p \le x \\ p \nmid N}} \frac{a_{\pi}(p) a_{\AI(\Lambda)}(p)}{p^{\frac12+\frac{1}{\log x}}} \frac{\log x/p}{\log x}
$
and for $V\in \mathbb R$ let
$
A_{K}(V;x)= \frac{1}{|\tmop{Cl}_K|} | \{ \Lambda \in \widehat{\tmop{Cl}}_K : P(\Lambda;x) > V \}|.
$

\begin{lemma} \label{lem:largedev}
Let $\epsilon>0$ be sufficiently small. Then
for $(\log \log |d|)^{3/4}\le V \le  \frac{ 4C_0 \log |d|}{\log \log |d|}$ we have that
\[
A_{K}(V;|d|^{\frac{1}{\epsilon V}}) \ll_{\pi, \epsilon} \exp\bigg(\frac{-V^2}{2 \log \log |d|}(1-2\epsilon)\bigg)+e^{-\frac{\epsilon}{4} V \log V}.
\]
\end{lemma}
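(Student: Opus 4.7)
The approach is Soundararajan's moment method: bound $A_K(V;|d|^{1/(\epsilon V)})$ via Chebyshev's inequality applied to a sufficiently high moment of $P(\Lambda;x)$ over $\Lambda\in\widehat{\tmop{Cl}_K}$, then optimize. A convenient simplification is that inert primes (those with $(\tfrac{d}{p})=-1$) contribute nothing to $P(\Lambda;x)$ since $a_{\AI(\Lambda)}(p)=1+(-1)=0$, so I only need to control the split and ramified contributions. Writing $b_p := a_\pi(p)\,p^{-1/\log x}\log(x/p)/\log x$, which is real and satisfies $|b_p|\le 4$ by the temperedness of $\pi$, Lemma~\ref{lem:momentbds}\,i) gives
\[
\frac{1}{|\tmop{Cl}_K|}\sum_{\Lambda} P_s(\Lambda)^{2\ell} \;\le\; \frac{(2\ell)!}{2^\ell\,\ell!}\,\sigma^{2\ell},\qquad \sigma^2 := 2\!\!\sum_{\substack{p\le x,\,p\nmid N \\ (d/p)=1}}\frac{b_p^2}{p},
\]
where $P_s$ denotes the split-prime part of $P$. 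Part~ii) of the same lemma handles the ramified contribution $P_r$ analogously, but with $\sum_{p\mid d} b_p^2/p \ll \log\log\log|d|$ replacing $\sigma^2$, so after writing $\{P(\Lambda;x)>V\} \subset \{P_s(\Lambda) > V(1-\delta)\} \cup \{|P_r(\Lambda)| > V\delta\}$ with $\delta=1/\log V$, the ramified piece is easily seen to be absorbed.

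The variance estimate $\sigma^2 \le \log\log|d|\,(1+o(1))$ follows from Lemma~\ref{lem:squares}\,ii) together with partial summation to pass from $a_\pi(p)^2$ to $b_p^2$. Applying Stirling's formula $(2\ell)!/(2^\ell \ell!) \le (2\ell/e)^\ell \cdot O(\sqrt{\ell})$ and Chebyshev's inequality yields
\[
A_K(V;x) \;\ll\; \sqrt{\ell}\left(\frac{2\ell\,\sigma^2}{eV^2(1-\delta)^2}\right)^{\ell} + (\text{negligible ramified term}),
\]
and I now optimize $\ell$ subject to the constraint $\ell \le \epsilon V/2$ required by Lemma~\ref{lem:momentbds} (since $x=|d|^{1/(\epsilon V)}$ gives $\log x = \log|d|/(\epsilon V)$). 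If the unconstrained optimum $\ell^\star = V^2/(2\sigma^2)$ satisfies $\ell^\star \le \epsilon V/2$, which happens precisely when $V \lesssim \epsilon\log\log|d|$, I set $\ell = \lfloor \ell^\star\rfloor$ and obtain the Gaussian bound $\exp(-V^2/(2\log\log|d|)(1-2\epsilon))$ after absorbing the $1+o(1)$ factors from the variance and from $\delta$. In the complementary regime $V > \epsilon\log\log|d|$, I take $\ell = \lfloor \epsilon V/2\rfloor$, and the bound becomes $\exp(-(\epsilon V/2)(1+\log(V/(\epsilon\log\log|d|)))(1+o(1)))$; since $V\gg\log\log|d|$ in this range, $\log(V/\log\log|d|)\sim\log V$, and this simplifies to the announced $\exp(-(\epsilon/4)V\log V)$, with the slack between $\epsilon/2$ and $\epsilon/4$ absorbing the lower-order terms and the polynomial Stirling factor.

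The main technical obstacle is keeping track of the several competing $(1+o(1))$ factors — from the variance estimate of Lemma~\ref{lem:squares}\,ii), from Stirling's formula, from the split $P=P_s+P_r$, and from the transition between the two regimes of $V$ — and verifying that they all fit within the announced constants $1-2\epsilon$ and $\epsilon/4$ uniformly on the range $[(\log\log|d|)^{3/4},\,4C_0\log|d|/\log\log|d|]$. The lower bound $V\ge(\log\log|d|)^{3/4}$ is precisely what is needed to guarantee $\ell = \lfloor \epsilon V/2\rfloor \ge 1$ for $|d|$ large and to ensure that $\log V$ dominates the error $O(\log\log\log|d|)$ produced by $\log(V/\log\log|d|)$ in the second regime; the upper bound $V\le 4C_0\log|d|/\log\log|d|$ is the natural one imposed by Remark~\ref{rem:upperbd}.
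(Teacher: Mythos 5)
There is a genuine gap, and it sits exactly where the lemma is needed most. Your moment-method setup (Chebyshev plus Lemma \ref{lem:momentbds}, variance from Lemma \ref{lem:squares}) is the right machinery, but you apply the moment bound to the \emph{entire} split-prime sum over $p\le x=|d|^{1/(\epsilon V)}$, so the hypothesis $x^{\ell}<\sqrt{|d|}/2$ caps you at $\ell\le \epsilon V/2$. With variance $\sigma^2\asymp\log\log|d|$, the Gaussian choice $\ell^\star\approx V^2/(2\sigma^2)$ is then admissible only for $V\lesssim\epsilon\log\log|d|$. In the complementary regime you take $\ell=\lfloor\epsilon V/2\rfloor$ and claim the exponent $\tfrac{\epsilon V}{2}\bigl(1+\log\tfrac{V}{\epsilon\log\log|d|}\bigr)$ simplifies because ``$\log(V/\log\log|d|)\sim\log V$''; this is false whenever $V$ is of order $\log\log|d|$ or any fixed power of it (e.g.\ for $V=\log\log|d|$ the logarithm is just $\log(1/\epsilon)=O_\epsilon(1)$, while $\log V\to\infty$). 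Concretely, at $V\asymp\log\log|d|$ your bound is $\exp\bigl(-\tfrac{\epsilon}{2}(1+\log\tfrac1\epsilon)V\bigr)$, which for small $\epsilon$ is far weaker than both terms in the lemma, in particular weaker than the Gaussian term $\exp\bigl(-\tfrac{(1-2\epsilon)}{2\log\log|d|}V^2\bigr)$. This range cannot be discarded: in the proof of Theorem \ref{thm:momentbd} the integral $\int e^{V/2}B_K(V-\tfrac12\log\log|d|)\,dV$ draws its main contribution from $V\asymp\tfrac12\log\log|d|$, which for small $\epsilon$ lies above your threshold $\epsilon\log\log|d|$, so the Gaussian decay is genuinely required there.

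The missing idea is the paper's three-way decomposition $P(\Lambda;x)=P_1+P_2+P_3$ at the threshold $z=x^{1/\log\log|d|}$ (together with the ramified part). For the small primes $p\le z$ the constraint in Lemma \ref{lem:momentbds} becomes $z^{\ell}<\sqrt{|d|}/2$, i.e.\ $\ell\lesssim\tfrac{\epsilon}{2}V\log\log|d|$ --- a factor $\log\log|d|$ more room --- so the Gaussian choice $\ell=\lfloor V_1^2/(2\log\log|d|)\rfloor$ is admissible for all $V\le\tfrac13\epsilon(\log\log|d|)^2$, covering the critical range. For the large primes $z<p\le x$ one is indeed limited to $\ell\asymp\epsilon V$, but there the variance is only $\sum_{z<p\le x}a_\pi(p)^2/p\ll\log\log\log|d|$ (not $\log\log|d|$), and this small variance is what produces the secondary term $e^{-\frac{\epsilon}{4}V\log V}$; similarly for the primes dividing $d$. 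Your treatment of the ramified part and your variance computation are fine, but without the $z$ versus $x$ split the two exponential terms in the statement cannot both be reached, and the lemma as stated is not proved on the intermediate range $\epsilon\log\log|d|\ll V\ll(\log\log|d|)^2$.
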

\begin{proof}
Set $x=|d|^{\frac{1}{\epsilon V}}$ and $z=x^{1/\log \log |d|}$.
Let
\[
P(\Lambda;x)=P_1(\Lambda)+P_2(\Lambda)+P_3(\Lambda)
\]
where
\[
P_1(\Lambda) =\sum_{\substack{p \le z \\ p \nmid d, p \nmid N}} \frac{a_{\pi}(p) a_{\AI(\Lambda)}(p)}{p^{\frac12+\frac{1}{\log x}}} \frac{\log x/p}{\log x}, \quad P_2(\Lambda) =\sum_{\substack{ z <p \le x \\ p \nmid d, \, p \nmid N}} \frac{a_{\pi}(p) a_{\AI(\Lambda)}(p)}{p^{\frac12+\frac{1}{\log x}}} \frac{\log x/p}{\log x},
\]
and $P_3(\Lambda)=P(\Lambda;x)-P_1(\Lambda)-P_2(\Lambda)$. Let $V_1=(1-\epsilon)V$, $V_2=\tfrac{\epsilon}{2}V$.
Clearly, if $P(\Lambda;x)>V$ then
$
i) \, P_1(\Lambda)>V_1, \, ii) \, P_2(\Lambda)>V_2, \text{ or }  iii) \, P_3(\Lambda)>V_2.
$
By Lemma \ref{lem:momentbds} we get that the number of $\Lambda \in \widehat{\tmop{Cl}_K}$ such that $i)$ holds is bounded by
\[
\begin{split}
\frac{1}{V_1^{2\ell}}  \sum_{\Lambda \in \widehat{\tmop{Cl}}_K} P_1(\Lambda)^{2\ell} \ll |\tmop{Cl}_K| \frac{(2\ell)!}{(V_1^22)^{\ell} \ell!} \bigg( 2 \sum_{\substack{p \le d \\ \left(\frac{d}{p} \right)=1, \, p \nmid N}} \frac{a_{\pi}(p)^2}{p}  \bigg)^{\ell}
\end{split}
\]
provided that $\ell \le \frac13 \epsilon V \log \log |d|$.
Applying Lemma \ref{lem:squares} and Stirling's formula the r.h.s. above is
$
\ll_{\pi,\epsilon} |\tmop{Cl}_K| \left(\frac{2\ell \log \log |d| (1+\epsilon^3)}{V_1^2 e} \right)^{\ell}.
$
For $V \le \frac13 \epsilon (\log \log |d|)^2$ set $\ell=\lfloor \frac{V_1^2}{2\log \log |d|} \rfloor$ and for larger $V$ take $\ell= \lfloor \epsilon V/3 \rfloor$. Hence,
\begin{equation} \label{eq:smallbd}
\frac{1}{|\tmop{Cl}_K|} |\{ \Lambda \in \widehat{\tmop{Cl}_K}: P_1(\Lambda)> V_1 \}| \ll_{\pi, \epsilon} \exp\bigg(\frac{-V^2(1-2\epsilon)}{2 \log \log |d|}\bigg)+e^{-\frac{\epsilon}{4} V \log V}.
\end{equation}

Arguing as before, taking $\ell =\lfloor \epsilon V/3\rfloor$ and using the bound $|a_{\pi}(p)|^2 \le 16$ we have that
\begin{equation} \label{eq:largebd}
\begin{split}
&\tst \frac{1}{|\tmop{Cl}_K|} |\{ \Lambda \in \widehat{\tmop{Cl}}_K: P_2(\Lambda)> V_2 \}| \ll   \frac{(2\ell)!}{(V_2^22)^{\ell} \ell!} \bigg( 2 \sum_{\substack{z \le p \le x \\ p \nmid N }} \frac{a_{\pi}(p)^2}{p}  \bigg)^{\ell} \\
 &\qquad  \qquad\ll  \tst  \left( \frac{65 \ell  \log \log \log |d|}{V_2^2 }  \right)^{\ell}
 \ll_{\epsilon}   e^{-\frac{\epsilon}{4}V \log V}.
\end{split}
\end{equation}
Similarly,
\begin{equation} \label{eq:dbound}
\begin{split}
\frac{1}{|\tmop{Cl}_K|} |\{ \Lambda \in \widehat{\tmop{Cl}_K}: P_3(\Lambda)> V_2 \}| \ll  &  \frac{(2\ell)!}{(V_2^22)^{\ell} \ell!} \bigg(  \sum_{\substack{p|d, \, p \nmid N}} \frac{a_{\pi}(p)^2}{p}  \bigg)^{\ell}
 \ll_{\epsilon}   e^{-\frac{\epsilon}{4}V \log V}.
\end{split}
\end{equation}
Combining \eqref{eq:smallbd}, \eqref{eq:largebd}, and \eqref{eq:dbound} completes the proof.
\end{proof}
\subsection{Proof of Theorem \ref{thm:momentbd}} Let
$
B_K(V)=\frac{1}{|\tmop{Cl}_K|} | \{ \Lambda \in \widehat{\tmop{Cl}}_K : \log L(\tfrac12, \pi \times \AI(\Lambda)) > V \}|.
$
Observe that
\begin{equation}\label{eq:IBP}
\begin{split} \tst
\frac{1}{|\tmop{Cl}_K|} \sum_{\Lambda \in \widehat{\tmop{Cl}_K} } \sqrt{L(\tfrac12, \pi \times \AI(\Lambda))}&=\frac12 \int_{\mathbb R} e^{V/2} B_K(V) \, dV\\
&= \tst\frac{1}{2 (\log |d|)^{1/4}} \int_{\mathbb R} e^{V/2} B_K(V-\tfrac12 \log \log |d|) \, dV.
\end{split}
\end{equation}
It suffices to consider $2(\log \log |d|)^{3/4} \le V \le 2 C_0 \frac{\log |d|}{\log \log |d|}$ as the contribution of $V \le 2(\log \log |d|)^{3/4}$ is trivially  $O_{\eps}((\log |d|)^{-1/4+\varepsilon})$ and by Remark \ref{rem:upperbd} we have $B_K(V)=0$ for $V \ge  2 C_0 \frac{\log |d|}{\log \log |d|}$.

By Lemmas \ref{lem:chandee}, \ref{lem:squares}, and \eqref{eq:ramanujan} we have for $x \le |d|$ that
\begin{equation} \label{eq:ineq1}\tst
\log L(\tfrac12, \pi \times \AI(\Lambda)) \le P(\Lambda;x)-\tfrac12 \log \log |d|+2C_0\frac{ \log |d|}{\log x}+o( \log \log |d|).
\end{equation}
Taking $x=|d|^{\frac{1}{\epsilon V}}$
it follows from \eqref{eq:ineq1} that
$
B_K(V-\tfrac12 \log \log |d|) \le A_K(V(1-3C_0\epsilon); |d|^{\frac{1}{\epsilon V}}).
$
Applying the above estimate and using Lemma \ref{lem:largedev} along with the identity
\[
\int_{\mathbb R} e^{\frac{-t^2}{2 \log \log |d|}+t/2} \, dt = \sqrt{2\pi \log \log |d|} (\log |d|)^{\frac18}
\]
 we get that the r.h.s. of \eqref{eq:IBP} is
\[
\tst  \ll  (\log |d|)^{-\frac14}  \int_{2(\log \log |d|)^{3/4}}^{2C_0 \frac{\log |d|}{\log \log |d|}} \bigg( e^{\frac{-V^2(1-7C_0 \varepsilon)}{2\log \log |d|}} + e^{-\frac{\epsilon}{5}V \log V} \bigg) \, dV+ (\log |d|)^{-\frac14+\varepsilon}
\ll_{\pi,\eps}  \frac{1}{(\log |d|)^{\frac18-\varepsilon}},
\]
which completes the proof. \qed

\bibliography{fundamental}

\end{document}